\def\XXint#1#2#3{{\setbox0=\hbox{$#1{#2#3}{\int}$}
    \vcenter{\hbox{$#2#3$}}\kern-.5\wd0}}
\def\({\left(}
\def\){\right)}
\def\RR{{\mathbb{R}}}
\def\CC{{\mathbb{C}}}
\newcommand{\tta}{\theta}
\newcommand{\dis}{\displaystyle}
\newcommand{\be}{\begin{equation}}
\newcommand{\ee}{\end{equation}}
\newcommand{\bea}{\begin{eqnarray}}
\newcommand{\eea}{\end{eqnarray}}
\newcommand{\beann}{\begin{eqnarray*}}
\newcommand{\eeann}{\end{eqnarray*}}
\newcommand{\nnn}{\nonumber}
 \newtheorem{theorem}{Theorem}[section]
\newtheorem{remark}[theorem]{Remark}
\newtheorem{lemma}[theorem]{Lemma}
\newtheorem{prop}[theorem]{Proposition}
\begin{document}

\title{Stability of symmetric vortices for two-component Ginzburg--Landau systems}
\author{{\Large Stan Alama}\footnote{Dept. of Mathematics and Statistics,
McMaster Univ., Hamilton, Ontario, Canada L8S 4K1.  Supported
by an NSERC Discovery Grant.} \and  {\Large Qi Gao${}^*$}
 }

\thispagestyle{empty}
\maketitle

\begin{abstract}
We study Ginzburg--Landau equations for a complex vector order parameter
$\Psi=(\psi_+,\psi_-)\in\mathbb C^2$.  We consider the Dirichlet problem in the disk in $\mathbb{R}^2$ with a symmetric,
degree-one boundary condition, and study its stability, in the sense of the spectrum of the second variation of the energy.  We find that the stability of the degree-one equivariant solution depends on both the Ginzburg-Landau parameter as well as the sign of the interaction term in the energy.

\bigskip

\noindent
{\bf Keywords:} Calculus of variations, elliptic equations and systems, superconductivity, vortices.

\medskip

\noindent
{\bf MSC subject classification:}  35J50, 58J37

\end{abstract}

\newpage


\baselineskip=18pt

\section{Introduction}
Let $\Omega\subset\mathbb{R}^2$ be a smooth, bounded domain, and $\Psi\in H^{1}(\Omega;\mathbb{C}^2 )$. We define an energy functional
 \begin{equation}\label{energy}
E_{\lambda}(\Psi; \Omega)=\\
\dis\int_{\Omega}\frac{1}{2}|\nabla\Psi|^{2}
+\frac{\lambda}{4}[A_{+}(|\psi_{+}|^{2}-t^2_{+})^{2}+A_{-}(|\psi_{-}|^{2}-t^2_{-})^{2}+2B(|\psi_{+}|^{2}-t^2_{+})(|\psi_{-}|^{2}-t^2_{-})]\ ,
\end{equation}
where $\Psi=[\psi_+ (x), \psi_- (x)]\in\mathbb{C}^2$, $A_\pm >0$, $B$ and $\lambda>0$ are parameters. Energy functionals 
of a form similar to $E_\lambda$ have been introduced in physical models of Bose-Einstein condensates \cite{ktu03} and for ruthenate superconductors \cite{kr98}; we will discuss these applications briefly at the end of the introduction. 

Throughout the paper we make the following assumptions concerning the constants appearing in \eqref{energy}:
\begin{equation*}  
A_+, A_->0, \ B^2<A_+A_-, \quad t_+, t_->0.
 \tag{\text{H} } 
 \end{equation*}
 By hypothesis (H), the potential term in the energy
$$  F(\Psi) = A_{+}(|\psi_{+}|^{2}-t^2_{+})^{2}+A_{-}(|\psi_{-}|^{2}-t^2_{-})^{2}+2B(|\psi_{+}|^{2}-t^2_{+})(|\psi_{-}|^{2}-t^2_{-})  $$ 
is nonnegative and coercive, and attains its minimum (of zero) when $|\psi_\pm|=t_\pm$. 
 As $\lambda\to\infty$, minimizers $\Psi$ should lie on the manifold in $\mathbb{C}^2$
on which the potential $F(\Psi)$ vanishes. That manifold is a 2-torus $\mathbb{T}\subset\mathbb{C}^2$, parameterized by two
real phases $\Psi=[\psi_+,\psi_- ]=[t_+ e^{i\alpha_+}, t_- e^{i\alpha_-}]$, and thus a $\mathbb{T}^2$-valued map $\Psi(x)$ carries a pair of integer-valued 
degrees around any closed curve $C$,
\begin{equation*}
\text{deg}(\Psi; C)=[N_+, N_- ],\quad N_+ =\text{deg}(\psi_+; C),\quad N_- =\text{deg}(\psi_-; C).
\end{equation*}
If along the boundary $\partial\Omega$, $\Psi$ has nonzero degree in either component, then there is no finite energy map $\Psi$ which takes
values in $\mathbb{T}$ and satisfies those boundary conditions, and we expect that {\it vortices} of solutions will be created in the $\lambda\to\infty$, just as in the classical
Ginzburg-Landau model \cite{bbh94book}.

In this paper we will restrict our attention to the unit disk $\Omega=\mathbb{D}_R$, with symmetric Dirichlet boundary conditions 
$\psi_\pm \big|_{\partial\mathbb{D}_R} =t_\pm e^{i N_\pm\theta}$, with $N_\pm\in\mathbb Z$.  For these special boundary conditions, there exist equivariant solutions (written in polar coordinates,) $\Psi(x)=(f_+(r)e^{iN_+\theta}, f_-(r)e^{iN_-\theta})$.  

For  the classical (single-component) Ginzburg-Landau equations in $\mathbb{R}^2$,
 \begin{equation}\label{GL}
 \left\{ 
 \begin{array}{ll}
\displaystyle -\Delta u=\lambda u(1-|u|^2 )u\ ,&\text{in}\ \ \mathbb{D}_1,\\
 u=z^d\ ,&\text{on}\ \ \partial\mathbb{D}_1,
 \end{array}
 \right.
 \end{equation}
much is known about these solutions. For any degree $d$, there exists a unique equivariant solution, of the form $u=f(r)e^{id\theta}$ in \cite{hh94}, 
with $f'(r)>0$.
As noted in \cite{bbh94book}, if $\lambda$ is smaller than the first Dirichlet eigenvalue in $\Omega$, then this is the unique solution to \eqref{GL}. 
 Bauman-Carlson-Phillips \cite{bcp93} proved that (in any domain $\Omega$) a local minimizer with degree 1 vanishes at a single point. 
 Mironescu \cite{m95} discussed the stability 
 of the radial solution to \eqref{GL} as a function of the given degree $d$, showing that the equivariant solution with degree one is stable for any $\lambda>0$, while for $|d|\ge 2$, the solution becomes unstable at a critical value of $\lambda$. Comte and Mironescu \cite{cm98} proved that 
the loss of stability (see \cite{m95}) leads to the appearance of a bifurcation from the branch of equivariant solutions for $d=2$ (see also \cite{sauv03}).

Given the results on the single-component Ginzburg--Landau model, it is natural to specialize to the degree-1 case.  Indeed, in \cite{thesis} it is shown that {\em entire} solutions of the Euler-Lagrange equations (in all $\RR^2$) of the two-component system \eqref{energy} with degree (at infinity) $|n_\pm|\ge 2$ are unstable.  
In this paper we study the stability of the equivariant solutions with degree $[N_+,N_-]=[1,1]$ of the Dirichlet problem in the unit disk $\mathbb{D}_1$. 
By stability, we mean positivity of the second variation of the energy $E_\lambda$ as a quadratic form on $H^1_0(\mathbb D_1;\CC^2)$.  Dynamical notions of stability depend on the type of evolution equation chosen for the model, and for the natural Schr\"odinger dynamics for applications to Bose-Einstein condensates 
there are many different definitions of stability (see \cite{pe11}), most of which are in some way related to the second variation of the energy

 We write the energy functional in the unit disk as
\begin{multline}\label{energylam}
E_{\lambda}(\Psi)=\dis \int_{\mathbb{D}_{1}}{1\over 2}|\nabla\Psi|^2 +{\lambda\over4}\left[A_{+}(|\psi_{+}|^{2}-t^2_{+})^2\right.\\
                                 \dis \left.+A_{-}(|\psi_{-}|^{2}-t^2_{-})^2  +2B(|\psi_{+}|^{2}-t^2_{+})(|\psi_{-}|^{2}-t^2_{-})\right]\ ,
\end{multline}
and consider minima (or more generally, critical points) of $E_\lambda (\Psi)$ over the space {\bf H}, consisting
of all functions $\Psi\in H^{1}(\mathbb{D}_1; \mathbb{C}^2 )$ with the symmetric boundary condition:
\begin{equation}\label{boundary}
\Psi\big|_{\partial\mathbb{D}_1}=[t_+ e^{i\tta}, t_- e^{i\tta}].
\end{equation}
The associated Euler-Lagrange equations to $E_\lambda (\Psi)$ with boundary condition \eqref{boundary} is as follows:
\begin{equation}\label{dirichlet}
\left\{
\begin{array}{ll}
-\Delta\psi_{+}+\lambda[A_{+}(|\psi_+ |^{2}-t^2_+ )+B(|\psi_- |^{2}-t^2_- )]\psi_{+}=0, & \text{in}\ \mathbb{D}_1,\\
-\Delta\psi_{+}+\lambda[A_{-}(|\psi_- |^{2}-t^2_- )+B(|\psi_+ |^{2}-t^2_+ )]\psi_{-}=0, & \text{in}\ \mathbb{D}_1,\\
\psi_{\pm}=t_\pm e^{i\theta}, & \text{on}\ \partial\mathbb{D}_1 .
\end{array}
\right.
\end{equation}
We note that rescaling by $R=\sqrt{\lambda}$, this Dirichlet problem with $\lambda=1$ is equivalent to the Dirichlet problem in a very large
disk $\mathbb{D}_R$, as $R\to\infty$ when $\lambda\to\infty$. Thus, this problem is also an approximation to the stability problem for
entire solutions in all $\mathbb{R}^2$.

The two-component model \eqref{energy} was studied by \cite{abm09}, \cite{abm13}, in the ``balanced" case, $A_\pm =1$, $t^2_\pm ={1\over2}$, for which the two components $\psi_\pm$ are interchangeable.
Analogous to the arguments in \cite{abm13}, we note that the stability of degree $[1,1]$ vortex to
\eqref{dirichlet} depends on both the Ginzburg-Landau parameter $\lambda$ and the sign of the interaction coefficient $B$ in the energy.  The main theorem is as follows:
\begin{theorem}\label{main}
Let $A_+, A_->0$ be fixed, and $B$ such that $B^2<A_+A_-$.
Assume $\Psi(x; \lambda)=[f_{+}(r; \lambda)e^{i\tta},f_{-}(r; \lambda)e^{i\tta}]$ is the equivariant solution for those parameters $A_\pm,B$
to \eqref{dirichlet}.
\begin{description}
  \item[(i)] If $B<0$, then $\Psi(x; \lambda)$ is stable for $\forall \lambda>0$, in the sense $E''_{\lambda}(\Psi)[\Phi]>0$
   $\forall\Psi\in H^1_{0}(\mathbb{D}_{1};\mathbb{C}^2 )$.
  \item[(ii)] There is $B_0 >0$ such that for any $B\in (0, B_0 )$, there exists a unique constant $\lambda_{*}=\lambda_{*}(B)>0$ such that $\Psi(x;\lambda)$ is unstable for any $\lambda>\lambda_{*}$.
\end{description}
\end{theorem}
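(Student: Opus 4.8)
The plan is to reduce the sign of the quadratic form $E''_\lambda(\Psi)[\Phi]$ to a scalar, radial problem in a single angular sector. First I would compute the second variation: writing $u_\pm=f_\pm^2-t_\pm^2$ and $q_\pm=\Re(\bar\psi_\pm\phi_\pm)$ for $\Phi=(\phi_+,\phi_-)\in H^1_0(\mathbb{D}_1;\mathbb{C}^2)$,
\[
E''_\lambda(\Psi)[\Phi]=\sum_{\pm}\int_{\mathbb{D}_1}\Big(|\nabla\phi_\pm|^2+\lambda(A_\pm u_\pm+Bu_\mp)|\phi_\pm|^2\Big)
+2\lambda\int_{\mathbb{D}_1}\Big(A_+q_+^2+A_-q_-^2+2Bq_+q_-\Big).
\]
Since $f_\pm>0$ solves $-\Delta f_\pm+\tfrac1{r^2}f_\pm+\lambda(A_\pm u_\pm+Bu_\mp)f_\pm=0$, the operator $M_\pm=-\Delta+\lambda(A_\pm u_\pm+Bu_\mp)$ has the nodeless zero mode $\psi_\pm=f_\pm e^{i\theta}$, so the ground-state substitution $\phi_\pm=\psi_\pm\sigma_\pm$ recasts the diagonal part and gives
\[
E''_\lambda(\Psi)[\Phi]=\int_{\mathbb{D}_1}\sum_\pm f_\pm^2\Big(|\nabla\sigma_\pm|^2+\tfrac2{r^2}\Im(\bar\sigma_\pm\partial_\theta\sigma_\pm)\Big)
+2\lambda\int_{\mathbb{D}_1}\Big(A_+f_+^4(\Re\sigma_+)^2+A_-f_-^4(\Re\sigma_-)^2+2Bf_+^2f_-^2\,\Re\sigma_+\,\Re\sigma_-\Big).
\]
By hypothesis (H) ($A_+A_->B^2$) the potential quadratic form is positive semidefinite for \emph{both} signs of $B$, so any instability must come from the kinetic part.

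Next I would expand $\sigma_\pm=\sum_n s_\pm^{(n)}(r)e^{in\theta}$. The kinetic integrand in the $n$-th mode carries the factor $\tfrac{n(n+2)}{r^2}$, which is nonnegative for every $n$ except $n=-1$, while the $(\Re\sigma)^2$ potential couples only the modes $n$ and $-n$. Hence every angular block is a nonnegative form except the one coupling $n=\pm1$, i.e.\ (after multiplying by $e^{i\theta}$) the sector of $\Phi$ spanned by the radial mode $m=0$ and the mode $m=2$. Thus $E''_\lambda(\Psi)\ge0$ on $H^1_0$ if and only if the restriction $Q_{\mathrm{sec}}$ to this one sector is $\ge0$, and any loss of stability must originate there. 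Returning to the components $b^0_\pm,b^2_\pm$ (the $m=0$ and $m=2$ radial profiles of $\phi_\pm$), one finds $Q_{\mathrm{sec}}$ built from the radial operators $M_\pm$ (on $m=0$) and $M_\pm+\tfrac4{r^2}$ (on $m=2$), coupled only through $\lambda\int(A_+f_+^2c_+^2+A_-f_-^2c_-^2+2Bf_+f_-c_+c_-)$ with $c_\pm=b^0_\pm+b^2_\pm$; the sign of $B$ now enters decisively.

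For part (ii) I would exhibit a destabilizing direction when $B>0$: the vortex-splitting field obtained by translating the two components in \emph{opposite} directions, $\phi_\pm=\mp\chi(r)\,\partial_x\psi_\pm$, with $\chi$ a cutoff equal to $1$ away from $\partial\mathbb{D}_1$. Since the \emph{common} translation $(\partial_x\psi_+,\partial_x\psi_-)$ is an exact zero mode of the whole-plane second variation, flipping the sign of the second component flips only the cross term $4\lambda B\int q_+q_-$, and using $\Re(\bar\psi_\pm\partial_x\psi_\pm)=f_\pm f_\pm'\cos\theta$ I expect
\[
E''_\lambda(\Psi)[\Phi]=-8\lambda B\,\pi\int_0^1\chi^2 f_+f_+'f_-f_-'\,r\,dr+O(1)
=-8\lambda B\,I+O(1),\qquad I=\pi\int_0^1 f_+f_+'f_-f_-'\,r\,dr>0 .
\]
Because $f_\pm'>0$, $I>0$, and the rescaling $r=\rho/\sqrt\lambda$ (noted in the introduction) shows $I$ stays bounded below as $\lambda\to\infty$, while the cutoff error is $O(1)$; hence $E''<0$ for $\lambda$ large when $B>0$. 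To promote this to a sharp threshold I would let $\mu(\lambda)$ be the lowest eigenvalue of the sector operator and show that $\mu(\lambda)>0$ for $\lambda$ below the first Dirichlet eigenvalue (stability at small $\lambda$, as in the scalar theory of \cite{m95}), that $\mu(\lambda)<0$ for large $\lambda$, and that $\{\mu<0\}$ is a half-line $(\lambda_*,\infty)$, which gives the unique $\lambda_*$; the smallness restriction $B<B_0$ is what guarantees that the equivariant profile is the relevant positive, monotone solution and that this one-sided monotonicity holds.

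For part (i), $B<0$, the same splitting field now gives $+8\lambda|B|I>0$, so the natural candidate is stabilized, and it remains to show $Q_{\mathrm{sec}}\ge0$ on the whole sector. Here I would first observe that the $m=2$ operator $M_\pm+\tfrac4{r^2}$ is nonnegative, since $f_\pm$ is a positive supersolution, $(M_\pm+\tfrac4{r^2})f_\pm=\tfrac3{r^2}f_\pm>0$; thus the only possible negativity is the Hardy-type deficit $\int f_\pm^2(|N_\pm'|^2-\tfrac1{r^2}|N_\pm|^2)$ from the radial ($m=0$) block $b^0_\pm=f_\pm N_\pm$. The plan is to show that for $B\le0$ the coupled sector system is cooperative, so that the common translation—an exact null direction on $\mathbb{R}^2$—serves as a nodeless ground state of eigenvalue $0$, forcing $Q_{\mathrm{sec}}\ge0$ by a Perron--Frobenius/maximum-principle argument, with Dirichlet confinement only raising eigenvalues to give strict positivity on $\mathbb{D}_1$. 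I expect the principal obstacle to be exactly this last step: controlling the Hardy-deficient radial block against the positive cross-coupling uniformly in $\lambda$ when $B<0$, and, in (ii), establishing the one-sided monotonicity of $\mu(\lambda)$ that yields a single threshold. Both rest on fine quantitative properties—positivity, monotonicity ($f_\pm'>0$), and the $\lambda\to\infty$ asymptotics—of the equivariant profiles $f_\pm$.
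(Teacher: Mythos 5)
Your reduction and your part (ii) essentially reproduce the paper's argument. The ground-state substitution $\phi_\pm=\psi_\pm\sigma_\pm$, with kinetic weight $n(n+2)/r^2$ on the $n$-th mode of $\sigma_\pm$ and the potential coupling $n$ with $-n$, is a cleaner route to the same conclusion the paper reaches by brute-force Fourier decomposition in Section~3: only the sector spanned by the $m=0$ and $m=2$ modes of $\phi_\pm$ (the paper's $\mathcal{Q}^{(1)}_\lambda$, operators \eqref{L1complex}, \eqref{L1real}) can be negative. In particular your observation that the $n=0$ block of $\sigma$ is manifestly nonnegative would replace the paper's Proposition~\ref{mu0}, which instead goes through Lemma~\ref{4} and a Pohozaev-type boundary identity \eqref{a1fpm}--\eqref{fa1pm} — a genuine simplification, provided you justify the substitution near $r=0$, where $f_\pm$ vanishes linearly and $\sigma_\pm=\phi_\pm/\psi_\pm$ need not be in $H^1$ (a density/cutoff argument is needed). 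Your instability mechanism is exactly the paper's Theorem~\ref{instability}: the antipodal field $\pm\partial_{x_1}\psi_\pm$, rescaled to $\mathbb{D}_R$ with $R=\sqrt{\lambda}$ and cut off, with limiting value $-8\pi B\int_0^\infty f^\infty_+f^\infty_-(f^\infty_+)'(f^\infty_-)'\,r\,dr<0$, where $0<B<B_0$ enters precisely through monotonicity (Theorem~\ref{monotone}(ii)). Two caveats: your claim that the error is $O(1)$ is stronger than what is true or needed — the mismatch between the disk profile $f_\pm(\cdot;\lambda)$ and $f^\infty_\pm$ plus the cutoff contribute $o(\lambda)$, which suffices and is what the rescaled limit combined with the $r^{-2}$, $r^{-3}$ asymptotics of Theorem~\ref{asymptotics} actually delivers; and you need not prove monotonicity of $\mu^{(1)}_\lambda$ in $\lambda$ — the paper does not either, since the single test function is negative for \emph{every} $R>R_*$, giving instability on the half-line $\lambda>\lambda_*:=R_*^2$ directly.

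The genuine gap is in part (i), at exactly the step you flagged. Your plan — cooperativity for $B\le0$, the common translation as a ``nodeless ground state of eigenvalue $0$'' on $\RR^2$, with ``Dirichlet confinement only raising eigenvalues'' — fails as stated, for two reasons. First, the translation pair $\bigl(f^\infty_\pm/r,\,(f^\infty_\pm)'\bigr)$ is not an $L^2(r\,dr)$ zero mode: $f^\infty_\pm/r\sim t_\pm/r$ is not square-integrable at infinity, so Perron--Frobenius for eigenvalues does not apply directly. Second, there is no domain monotonicity between $\mathbb{D}_1$ and $\RR^2$: the linearized potential on the disk is built from the disk profile $f_\pm(\cdot;\lambda)$, not from $f^\infty_\pm$, so restricting the whole-plane form to $H^1_0(\mathbb{D}_1)$ does not yield the disk form. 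The paper's Lemma~\ref{mu1Bneg} supplies the rigorous substitute, which is an \emph{on-disk} comparison identity rather than a confinement argument: in the variables $F_\pm=\frac12(a^\pm_0+a^\pm_2)$, $K_\pm=\frac12(a^\pm_0-a^\pm_2)$, the sign $B<0$ lets one choose the ground state with $F_\pm, K_\pm\ge0$ (your cooperativity), while $(\widetilde F_\pm,\widetilde K_\pm)=(f_\pm/r,\,f'_\pm)$ — built from the \emph{disk} solution — solves the same sector system with zero eigenvalue \eqref{tilde} on $[0,1]$, with strictly positive boundary data $\widetilde F_\pm(1)=t_\pm$, $\widetilde K_\pm(1)=f'_\pm(1)>0$. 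Two integrations by parts then produce the boundary identity \eqref{m2eqn},
\begin{equation*}
\mu^{(1)}_{\lambda}\int^1_{0}(\widetilde{K}_{\pm}K_{\pm}+\widetilde{F}_{\pm}F_{\pm})\,r\,dr=-K'_{\pm}(1)\widetilde{K}_{\pm}(1)-F'_{\pm}(1)\widetilde{F}_{\pm}(1),
\end{equation*}
whose right side is strictly positive (ODE uniqueness excludes $K'_\pm(1)=0=F'_\pm(1)$), forcing $\mu^{(1)}_\lambda>0$; note this step also consumes the monotonicity $f'_\pm\ge0$ of the disk profile to sign the integral on the left. Until you replace the confinement heuristic by this identity (or an equivalent positive-supersolution criterion on $[0,1]$ with the boundary terms handled), your proof of part (i) is incomplete.
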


The restriction $0<B<B_0$ is imposed to guarantee that the entire solution (in all $\RR^2$) be monotone increasing in each component.  The existence of such a $B_0$ follows from \cite{ag13}.  Indeed, it may be deduced from asymptotic expansions done in \cite{ag13} that the monotonicity of both $f_\pm(r)$ cannot hold for all $0<B<\sqrt{A_+A_-}$, as there is a critical value of $B$ for which one component must approach its limiting value $t_\pm$ from above as $r\to\infty$.  Nevertheless, the exact value of $B_0$ is not explicitly known.

Theorem~\ref{main} generalizes results of \cite{abm13}, which were restricted to the ``balanced" case,  $A_\pm =1$, $t^2_\pm ={1\over2}$, and $-1<B<1$.  The balanced case presents many simplifications:  in particular, for any $B$, the equivariant solution has the form 
$\Psi = {1\over\sqrt{2}}(u,u)$, where $u=f(r)e^{i\theta}$ is the solution to the classical Ginzburg-Landau problem, and the solution space is invariant under an involution, $(\psi_+(x),\psi_-(x))\mapsto (-\psi_-(-x),-\psi_+(-x))$.  This symmetry enables the authors to reduce the problem of stability from a system to a single complex-valued equation. The results of \cite{abm13} are somewhat sharper, and they prove that when $B<0$, the radial solution 
is {\it unique}. In particular, they show that when $B>0$, a vortex of degree $[n_+, n_- ]=[1,1]$ is  {\it not} radially symmetric: it must have non-coincident zeros in its two components, $\psi_\pm$.

Following \cite{abm13}, it is natural to suppose that bifurcation occurs at $\lambda_* (B)$, when $B\in (0, B_0 )$. As in \cite{abm13}, the unstable
directions correspond to separating the single $[1,1]$ vortex at the origin in two antipidal vortices with degree $[n_+, n_- ]=[1,0]$ and $[0,1]$.
Formally, the second variation of energy on the plane $\RR^2$ is negative along the direction $\Phi=(\varphi_+,\varphi_-)$ with
\begin{equation}\label{split}
\varphi_\pm 
   = \pm{\partial\over\partial x_1} \left(f_\pm(r) e^{i\theta}\right)
    = \pm\frac12\left( f'_\pm(r) +{1\over r}f_\pm(r)  \right)   
     \pm\frac12\left(f'_\pm(r) -{1\over r}f_\pm(r)  \right)e^{2i\theta}.
\end{equation}
We conjecture that the same separation phenomenon holds in the more general case as well, although the methods of \cite{abm13} do not carry over to the more general case in a straightforward way.

The plan of the paper is as follows:  section 2 reviews some relevant results on the equivariant solutions and their properties.  Section 3 covers the decomposition of the second variation in Fourier modes, and identifies the two key terms in the decomposition.
The proof of part (i) of Theorem~\ref{main} is given in Section~4, following the strategy
in \cite{m95}, \cite{abm13}.  Section 5 is devoted to the instability result stated in part (ii) of Theorem~\ref{main}.

\subsection*{Two-component Bose-Einstein condensates}

In a model of a two-compenent BEC (see \cite{eto11},) we consider a pair of complex wave functions $\Phi=(\varphi_1,\varphi_2)\in\CC^2$, defined in the sample domain $\Omega\subset\RR^2$.  The energy of the configuration is defined as
$$  E(\varphi_1, \varphi_2) = 
  \int_\Omega\left[ {\hbar^2\over 2m_1}|\nabla\varphi_1|^2
     + {\hbar^2\over 2m_2}|\nabla\varphi_2|^2
      + \frac12\left( g_1|\varphi_1|^4 + g_2|\varphi_2|^4
            + 2g_{12}|\varphi_1|^2|\varphi_2|^2\right)\right] dx,
$$
where $m_1,m_2>0$ are the masses, and the coupling constants satisfy the positivity condition $g_1g_2-g_{12}^2>0$.  The Gross-Pitaevskii equations govern the dynamics of the condensate,
\begin{equation}\label{GP} \left.
\begin{aligned}
i\hbar\partial_t\varphi_1 &= -{\hbar^2\over 2m_1}\Delta\varphi_1
       + g_1|\varphi_1|^2\varphi_1 + g_{12}|\varphi_2|^2\varphi_1, \\
i\hbar\partial_t\varphi_2 &= -{\hbar^2\over 2m_2}\Delta\varphi_2
       + g_{12}|\varphi_1|^2\varphi_2 + g_{2}|\varphi_2|^2\varphi_2.
\end{aligned}
\right\}
\end{equation}
A stationary equation of the desired form is obtained by considering standing wave solutions, $\varphi_i(x,t)= e^{-i\mu_i t/\hbar}u_i(x)$, $i=1,2$, where $\mu_i$ represent the chemical potentials:
\begin{align*}
 -{\hbar^2\over 2m_1}\Delta\varphi_1
       + g_1|\varphi_1|^2\varphi_1 + g_{12}|\varphi_2|^2\varphi_1
       &= \mu_1\varphi_1, \\
 -{\hbar^2\over 2m_2}\Delta\varphi_2
       + g_{12}|\varphi_1|^2\varphi_2 + g_{2}|\varphi_2|^2\varphi_2
        &=  \mu_2\varphi_2.
\end{align*}
In the variational formulation of the stationary problem, the chemical potentials represent Lagrange multipliers, which arise because of the constraints on the masses of the two condensate species, 
$$  \int_\Omega |\varphi_i|^2 dx = \int_\Omega |u_i|^2 dx = N_i, \quad i=1,2.  $$
By a rescaling of the dependent variables, $\psi_+=\sqrt[4]{{m_2\over m_1}}\,u_1$, $\psi_-=\sqrt[4]{{m_1\over m_2}}\,u_2$, we may eliminate the masses $m_i$ from the equations, and we obtain the system 
\begin{equation}\label{eqns}
\left\{\begin{array}{l}
-\Delta\psi_{+}+\lambda [A_{+}(|\psi_{+}|^{2}-t^2_{+})+B(|\psi_{-}|^{2}-t^2_{-})]\psi_{+}=0,\\ 
-\Delta\psi_{-}+\lambda [A_{-}(|\psi_{-}|^{2}-t^2_{-})+B(|\psi_{+}|^{2}-t^2_{+})]\psi_{-}=0,
\end{array}\right.
\end{equation}
with $\lambda={\sqrt{m_1m_2}\over \hbar^2}$, $A_+={m_1\over m_2}g_1$, $A_-={m_2\over m_1}g_2$, $B=g_{12}$, and 
$$ t_+^2= {\mu_1 g_2-\mu_2 g_{12}\over g_1g_2-g_{12}^2}     \sqrt{{m_2\over m_1}}, \qquad
     t_-^2= 
     {\mu_2 g_1-\mu_1 g_{12}\over g_1g_2-g_{12}^2}\sqrt{{m_1\over m_2}} .
$$
A more realistic model of a BEC would include a term representing uniform rotation, and a trapping potential to replace the unphysical Dirichlet boundary condition \eqref{boundary}.  Nevertheless, it is to be expected that minimizers $\Psi_\lambda$ to the true functional for a BEC will resemble minimizing solutions of the Dirichlet problem \eqref{dirichlet} when restricted to a neighborhood of an isolated vortex (see \cite{bbh94book}.)

\section{Properties of equivariant solutions}
In this section we recall some essential properties of equivariant solutions of the Ginzburg-Landau system, proven in our prevous paper \cite{ag13}. We define the inner product as follows: $\langle u,v\rangle=\text{Re}(\bar{u}v)={{\bar{u}v+u\bar{v}}\over 2}$. Recall that in rescaling by $R=\sqrt{\lambda}$, the Dirichlet problem \eqref{dirichlet} in the unit disk is equivalent to the Dirichlet problem in the
disk $\mathbb{D}_R$ for the system,
\begin{equation}\label{Reqns}
\left.
\begin{gathered}
-\Delta\psi_{+}+[A_{+}(|\psi_{+}|^{2}-t^2_{+})+B(|\psi_{-}|^{2}-t^2_{-})]\psi_{+}=0,\\ 
-\Delta\psi_{-}+[A_{-}(|\psi_{-}|^{2}-t^2_{-})+B(|\psi_{+}|^{2}-t^2_{+})]\psi_{-}=0,
\end{gathered}
\right\}
\end{equation}
with symmetric degree-one boundary condition 
$\Psi|_{\partial\mathbb{D}_R}=(t_+ e^{i\theta}, t_- e^{i\theta})$.
Equivariant solutions to \eqref{Reqns} satisfy $\Psi=[\psi_+, \psi_- ]=[f_+ (r)e^{i\theta}, f_- (r)e^{i\theta}]$.  Under this ansatz the system
\eqref{Reqns} reduces to a system of ODEs with boundary condition,
\be\label{radeq}
\left.
\begin{gathered}
 \dis -f''_{+}-\frac{1}{r}f'_{+}+\frac{1}{r^2}f_{+}+\left[ A_{+}(f^2_{+}-t^2_{+})+B(f^2_{-}-t^2_{-})\right]f_{+}=0, \\
 \dis -f''_{-}-\frac{1}{r}f'_{-}+\frac{1}{r^2}f_{-}+\left[ A_{-}(f^2_{-}-t^2_{-})+B(f^2_{+}-t^2_{+})\right]f_{-}=0, \\
 f_\pm(R)=t_\pm.
\end{gathered}
\right\}
\ee
We will also consider solutions
$f^\infty_\pm(r)$ defined for all $r\in (0,\infty)$, with asymptotic conditions $f^\infty_\pm(r)\to t_\pm$ as $r\to\infty$, corresponding to entire equivariant solutions to \eqref{Reqns}.
Since $f_\pm(r)=|\psi_\pm|$, we require all solutions to satisfy $f_\pm(r)\ge 0$ for all $r$. 

We begin with an {\it a priori} bound on solutions of the system \eqref{Reqns} in bounded domains.  Let $\lambda_{s}>0$ denote the smallest eigenvalue of the matrix $\left[\begin{array}{cc}A_{+} & B \\B & A_{-}\end{array}\right]$.  The following lemma is proven in \cite{ag13}:
\begin{lemma}\label{uniformbound}
Let $\Omega\subset\RR^2$ be a bounded smooth domain, and 
$\Psi=(\psi_+,\psi_-)$ be a solution of \eqref{Reqns} in $\Omega$, satisfying
\begin{equation}\label{BC}
  |\Psi(x)|^2 = |\psi_+(x)|^2 + |\psi_-(x)|^2 \le t_+^2 + t_-^2 \quad
\text{for all $x\in\partial\Omega$}.
\end{equation}
Then $|\Psi(x)|\le\Lambda$, where $\Lambda^2 :=\min\{{2M\over \lambda_{s}}, t^2_{+}+t^2_{-} \}$ for all $x\in\Omega$, with $M=\max\{A_{+}t^2_{+}+Bt^2_{-}, A_{-}t^2_{-}+Bt^2_{+}\}$.
 \end{lemma}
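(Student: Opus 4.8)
The plan is to reduce the vector estimate to a scalar maximum principle for $u:=|\Psi|^2=|\psi_+|^2+|\psi_-|^2$. Write $w=(|\psi_+|^2,|\psi_-|^2)$ and $c=(t_+^2,t_-^2)$, and let $\mathcal A=\left[\begin{array}{cc}A_+&B\\B&A_-\end{array}\right]$, so that $\lambda_s>0$ is its smallest eigenvalue and $M=\max_i(\mathcal Ac)_i$ in the notation of the statement. Differentiating with the identity $\tfrac12\Delta|\psi_\pm|^2=|\nabla\psi_\pm|^2+\langle\psi_\pm,\Delta\psi_\pm\rangle$ and substituting $\Delta\psi_\pm$ from \eqref{Reqns}, the coupled reaction terms assemble into the quadratic form $\langle w,\mathcal A(w-c)\rangle$, giving
\begin{equation*}
\tfrac12\Delta u=|\nabla\Psi|^2+\langle w,\mathcal Aw\rangle-\langle w,\mathcal Ac\rangle .
\end{equation*}

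The next step is to convert this identity into a scalar differential inequality whose right-hand side depends only on $u$. Since $B^2<A_+A_-$, the matrix $\mathcal A$ is positive definite, so $\langle w,\mathcal Aw\rangle\ge\lambda_s|w|^2$; using $|w|^2=|\psi_+|^4+|\psi_-|^4\ge\tfrac12 u^2$ yields $\langle w,\mathcal Aw\rangle\ge\tfrac{\lambda_s}{2}u^2$. For the coupling term I would use that the entries of $w$ are nonnegative (they are squared moduli) together with $(\mathcal Ac)_i\le M$ to get $\langle w,\mathcal Ac\rangle\le M(|\psi_+|^2+|\psi_-|^2)=Mu$. Discarding the nonnegative gradient term then produces
\begin{equation*}
\Delta u\ge \lambda_s u^2-2Mu=\lambda_s\,u\Big(u-\tfrac{2M}{\lambda_s}\Big).
\end{equation*}

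Finally I would invoke the maximum principle. As solutions are smooth, $u$ attains its maximum over $\overline\Omega$ at some $x^*$; if $x^*\in\partial\Omega$ then \eqref{BC} gives $u(x^*)\le t_+^2+t_-^2$, while if $x^*$ is interior then $\Delta u(x^*)\le0$ forces $u(x^*)\big(u(x^*)-2M/\lambda_s\big)\le0$, hence $u(x^*)\le 2M/\lambda_s$ (the case $u(x^*)=0$ being trivial). Collecting the two alternatives bounds $u$ by the boundary value $t_+^2+t_-^2$ and by the interior threshold $2M/\lambda_s$, giving $|\Psi|\le\Lambda$. I expect the main obstacle to be the second step: because $B$ may be negative and the two components may be unbalanced, the coupled reaction term $\langle w,\mathcal A(w-c)\rangle$ is sign-indefinite, so it is not evident that $\Delta u\ge 0$ for large $u$; the device that makes the argument work is to diagonalize the quadratic part through $\lambda_s$ while exploiting the nonnegativity of $w$ to linearize the coupling $\langle w,\mathcal Ac\rangle$ by $Mu$. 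A secondary bookkeeping point is to compare the interior threshold $2M/\lambda_s$ with the boundary value $t_+^2+t_-^2$ — using the Rayleigh quotient of $\mathcal A$ at $c$ together with $|c|^2\ge\tfrac12(t_+^2+t_-^2)^2$ — in order to record the resulting estimate in the stated form $\Lambda^2$.
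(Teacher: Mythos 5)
Your reduction is the natural one, and the first two steps are sound: the identity $\tfrac12\Delta u=|\nabla\Psi|^2+\langle w,\mathcal A(w-c)\rangle$ for $u=|\Psi|^2$, the coercivity bound $\langle w,\mathcal Aw\rangle\ge\lambda_s|w|^2\ge\tfrac{\lambda_s}{2}u^2$, and the linearization $\langle w,\mathcal Ac\rangle\le Mu$ using $w_\pm\ge0$ are all correct, and they are clearly the source of the threshold $2M/\lambda_s$ in the statement. (For the record, the paper itself gives no proof of this lemma --- it is quoted from \cite{ag13} --- so your proposal can only be judged on its own merits.) The genuine gap is in the endgame. The maximum-principle step is an \emph{either/or}: if the maximum of $u$ is attained on $\partial\Omega$ you get $\sup u\le t_+^2+t_-^2$, and if it is interior you get $\sup u\le 2M/\lambda_s$; since you do not know which alternative occurs, the honest conclusion is $u\le\max\{2M/\lambda_s,\,t_+^2+t_-^2\}$, not the minimum asserted in the lemma. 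Your closing sentence (``bounds $u$ by the boundary value \emph{and} by the interior threshold'') conflates ``or'' with ``and''.

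This is not cosmetic, and your own ``bookkeeping'' remark shows why. The Rayleigh-quotient comparison you propose gives
\begin{equation*}
\lambda_s\,(t_+^4+t_-^4)\le\langle c,\mathcal Ac\rangle\le M\,(t_+^2+t_-^2),
\qquad t_+^4+t_-^4\ge\tfrac12(t_+^2+t_-^2)^2,
\end{equation*}
hence $t_+^2+t_-^2\le 2M/\lambda_s$ \emph{always} under (H). So $\Lambda^2=\min\{2M/\lambda_s,\,t_+^2+t_-^2\}=t_+^2+t_-^2$ (up to the degenerate equality case), and that is precisely the bound your argument does not deliver: what you have proven is the weaker estimate $u\le 2M/\lambda_s$. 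The scalar route cannot be patched to reach $t_+^2+t_-^2$, because for values $t_+^2+t_-^2<u<2M/\lambda_s$ the right-hand side $\lambda_s u\left(u-2M/\lambda_s\right)$ of your differential inequality is negative, so an interior maximum in that window yields no contradiction; indeed the reaction term itself can be negative there --- e.g.\ at a point where $w\approx(u,0)$ one has $\langle w,\mathcal A(w-c)\rangle\approx u\left[A_+(u-t_+^2)-Bt_-^2\right]<0$ near $u=t_+^2+t_-^2$ whenever $A_+<B<\sqrt{A_+A_-}$, which hypothesis (H) permits. Establishing the stated $\Lambda$ therefore needs an ingredient beyond the pointwise maximum principle applied to $u$ alone (for instance, component-wise or invariant-region arguments as in \cite{ag13}); alternatively, you should flag that your argument proves the lemma only with $\min$ replaced by $\max$.
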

 
The existence and uniqueness of equivariant solutions to \eqref{Reqns} in any ball $\mathbb{D}_R$ with boundary condition follows from standard methods:

\begin{prop}\label{existR}
For any $R>0$ there exists a unique solution $(f_+(r),f_-(r))$ of \eqref{radeq} with $f_\pm(r)\ge 0$ for all $r\in (0,R)$.
\end{prop}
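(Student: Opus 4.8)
The plan is to produce the solution variationally and to extract uniqueness from a hidden-convexity structure of the energy. Write the reduced (radial) energy whose Euler--Lagrange system is exactly \eqref{radeq},
$$ \mathcal{E}(f_+,f_-) = \int_0^R \left[ \frac12\sum_{\pm}\left( (f'_\pm)^2 + \frac{f_\pm^2}{r^2}\right) + \frac14 F(f_+,f_-)\right] r\,dr , $$
on the weighted admissible class $\{(f_+,f_-) : \mathcal{E}(f_+,f_-)<\infty,\ f_\pm(R)=t_\pm\}$; finiteness of the $\int_0^R (f_\pm^2/r)\,dr$ contribution forces the natural vortex condition $f_\pm(0)=0$.

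\emph{Existence.} By hypothesis (H) the potential $F\ge 0$, so $\mathcal{E}\ge 0$, and the weighted Dirichlet term makes $\mathcal{E}$ coercive on the admissible class; hence a minimizing sequence is bounded in the weighted $H^1$. Extracting a weakly convergent subsequence and combining the (weak) lower semicontinuity of the convex quadratic gradient term with the continuity of $F$ produces a minimizer. Replacing $(f_+,f_-)$ by $(|f_+|,|f_-|)$ does not increase $\mathcal{E}$, since $F$ depends only on $f_\pm^2$ and $|\nabla|f||\le|\nabla f|$, so the minimizer may be taken with $f_\pm\ge 0$. Standard elliptic/ODE regularity (with the uniform $L^\infty$ control of Lemma~\ref{uniformbound}) shows it is smooth on $(0,R)$ and solves \eqref{radeq}; since $f_\pm(R)=t_\pm>0$, the strong maximum principle applied to the linear equation $-f''_\pm-\tfrac1r f'_\pm + c_\pm(r)\,f_\pm=0$, with $c_\pm$ bounded after the usual shift to a nonnegative zeroth-order coefficient, rules out interior zeros and gives $f_\pm>0$ on $(0,R)$.

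\emph{Uniqueness.} The key is the substitution $\rho_\pm = f_\pm^2$. In these variables $\tfrac12(f'_\pm)^2 = (\rho'_\pm)^2/(8\rho_\pm)$ is jointly convex in $(\rho_\pm,\rho'_\pm)$ for $\rho_\pm>0$, the centrifugal term $\tfrac12 f_\pm^2/r^2 = \rho_\pm/(2r^2)$ is linear, and the potential becomes $F = (\rho-t)^{\!\top} M (\rho-t)$, where $\rho-t=(\rho_+-t_+^2,\rho_--t_-^2)$ and $M=\begin{pmatrix} A_+ & B\\ B & A_-\end{pmatrix}$ is positive definite precisely by (H) ($A_\pm>0$, $B^2<A_+A_-$); thus $\tfrac14 F$ is strictly convex in $\rho$. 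Consequently the transformed functional $\mathcal{J}(\rho_+,\rho_-)$ is strictly convex on the convex set of nonnegative densities. Every positive solution of \eqref{radeq} corresponds via $\rho_\pm=f_\pm^2$ to a critical point of $\mathcal{J}$, and a strictly convex functional has at most one critical point, so the nonnegative solution is unique.

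The main obstacle is making the convexity argument rigorous at the degenerate endpoint $r=0$, where $\rho_\pm=0$ and the density $(\rho'_\pm)^2/\rho_\pm$ is singular. One must check that the interior positivity $f_\pm>0$ on $(0,R)$ obtained above legitimizes the change of variables there, and that the vanishing of $f_\pm$ at the single point $r=0$ obstructs neither the convexity inequality nor the correspondence between critical points of $\mathcal{E}$ and of $\mathcal{J}$. This is handled by truncating away from the origin and passing to the limit, exactly as in the single-component analysis of \cite{hh94}, \cite{m95}.
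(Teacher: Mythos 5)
Your proposal is correct, and on the uniqueness half it takes a recognizably different route from the paper. The paper disposes of Proposition~\ref{existR} in one line: existence by the direct method (exactly as you argue), and uniqueness ``by the method of Brezis \& Oswald'' \cite{bo86}, with details deferred to \cite{ag13}; that method proceeds through an integral identity obtained by testing the difference of the two equations against quotients of the form $(f_1^2-f_2^2)/f_1$ and $(f_1^2-f_2^2)/f_2$ and exploiting the monotonicity of the nonlinearity divided by $f$. Your hidden-convexity argument --- substituting $\rho_\pm=f_\pm^2$ so that the gradient term becomes the jointly convex $(\rho_\pm')^2/(8\rho_\pm)$, the centrifugal term becomes linear, and the potential becomes the strictly convex quadratic $(\rho-t)^{\top}M(\rho-t)$ with $M$ positive definite precisely by (H) --- is the variational face of the same structural fact, but as a proof it is self-contained, treats both signs of $B$ on an equal footing since only the positive definiteness of $M$ enters, and passes to the coupled system with no extra bookkeeping, whereas adapting the Brezis--Oswald test-function identity to a system requires some care with the cross terms. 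The delicate point is the same in either approach: the vortex at $r=0$, where $\rho_\pm$ vanishes and $(\rho_\pm')^2/\rho_\pm$ degenerates; your truncation-and-limit remark is the right fix, and it is legitimized by the linear vanishing $f_\pm(r)\sim cr$ with $c>0$ at the origin (Frobenius analysis at the regular singular point; cf.\ \eqref{fnear0} in Proposition~\ref{lemunique} for the entire solution), which keeps the ratio of any two nonnegative solutions bounded near $r=0$ and makes the endpoint derivatives of $\mathcal J$ along the convex interpolation well defined. One small caveat you should state explicitly: as an ODE system on $(0,R]$, \eqref{radeq} also admits solutions singular like $r^{-1}$ at the origin, so ``solution'' must be read --- as the paper implicitly does, since the profiles come from $\Psi\in H^1$ --- as a finite-energy profile with $f_\pm(0)=0$; your admissible class with $\int_0^R (f_\pm^2/r)\,dr<\infty$ encodes exactly this, so the statement you prove coincides with the intended one.
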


The existence statement follows from the direct method in the calculus of variations; uniqueness may be proven using the method of Brezis \& Oswald \cite{bo86} (see \cite{ag13} for details).
  In \cite{ag13} we also showed the existence of unique {\em entire} equivariant solutions, with the following properties:

\begin{prop}\label{lemunique}
Let  $A_{+}A_{-}-B^{2}>0$. Then there exists a unique solution $[f^\infty_+(r),\ f^\infty_-(r)]$ to \eqref{radeq} for $r\in [0,\ \infty)$.  Moreover,
\begin{gather}
f^\infty_{\pm}\in C^{\infty}\left((0,\ \infty)\right),\\
f^\infty_{\pm}(r)>0\ \text{ for all}\ r>0,\label{fpstive}\\ 
f^\infty_{\pm}(r)\sim r\ \text{for}\ r\sim0, \label{fnear0} \\
\int_0^\infty [(f^\infty_\pm)']^2\, r\, dr <\infty, \label{gradint}
\end{gather} 
In particular, $\Psi(x)=[f^\infty_+(r)e^{i\tta},\ f^\infty_-(r)e^{i\tta}]$ is an entire solution of \eqref{Reqns} in $\mathbb{R}^2$.
\end{prop}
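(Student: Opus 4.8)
The plan is to build the entire solution as a limit of the finite-disk solutions of Proposition~\ref{existR} as $R\to\infty$, and then to extract each stated property from the structure of the radial system \eqref{radeq}. The four properties separate into a local analysis near $r=0$, a decay analysis as $r\to\infty$, and the global questions of existence and uniqueness; I expect uniqueness to be the main obstacle.

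For existence I would take the unique nonnegative solutions $(f_+^R,f_-^R)$ of \eqref{radeq} on $[0,R]$ and pass to the limit. Lemma~\ref{uniformbound} gives the $R$-independent bound $(f_+^R)^2+(f_-^R)^2\le\Lambda^2$, and interior ODE estimates, uniform on compact subsets of $(0,\infty)$, yield $C^2_{\mathrm{loc}}$ convergence of a subsequence to a solution $(f_+^\infty,f_-^\infty)$ of the system on $(0,\infty)$. The delicate point is pinning the limit at infinity: I would argue that a bounded entire solution of the autonomous-at-infinity system must converge to an equilibrium of the potential, and that $(t_+,t_-)$ is the only admissible one, because the linear system $A_\pm(x_\pm-t_\pm^2)+B(x_\mp-t_\mp^2)=0$ has the invertible coefficient matrix $\left[\begin{array}{cc}A_+ & B\\ B & A_-\end{array}\right]$ (its determinant $A_+A_--B^2$ is positive by hypothesis).

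The local behaviour follows from \eqref{radeq} directly. Away from the origin the system is a regular nonlinear ODE with smooth coefficients, so bootstrapping gives $f_\pm^\infty\in C^\infty((0,\infty))$. At $r=0$ the leading operator $-f''-\tfrac1r f'+\tfrac1{r^2}f$ has indicial roots $\pm1$, and boundedness of $f_\pm^\infty$ excludes the $r^{-1}$ branch, forcing $f_\pm^\infty(r)\sim c_\pm r$ as $r\to0$, which is \eqref{fnear0}. For \eqref{fpstive} I would apply the strong maximum principle to each scalar equation $-f_\pm''-\tfrac1r f_\pm'+(\tfrac1{r^2}+c_\pm(r))f_\pm=0$, whose zeroth-order coefficient $c_\pm(r)$ is bounded on compact sets: a nonnegative solution vanishing at an interior point would vanish identically, contradicting the limit $t_\pm>0$ at infinity, so $f_\pm^\infty>0$ and the coefficient $c_\pm$ in the expansion above is strictly positive.

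For the finite-energy bound \eqref{gradint}, integrability near $r=0$ is immediate from $(f_\pm^\infty)'\to c_\pm$. As $r\to\infty$ I would linearize about $(t_+,t_-)$: setting $f_\pm^\infty=t_\pm-g_\pm$ with $g_\pm\to0$, the system becomes $-g_\pm''-\tfrac1r g_\pm'+(Mg)_\pm=\tfrac{t_\pm}{r^2}+(\text{lower order})$, where the coupling matrix $M=2\,\mathrm{diag}(t_+,t_-)\left[\begin{array}{cc}A_+ & B\\ B & A_-\end{array}\right]\mathrm{diag}(t_+,t_-)$ is positive definite because $A_+,A_->0$ and $A_+A_--B^2>0$. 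Invertibility of $M$ lets one balance the $r^{-2}$ forcing from the centrifugal term, giving the algebraic decay $g_\pm=O(r^{-2})$ and hence $(f_\pm^\infty)'=O(r^{-3})$; then $[(f_\pm^\infty)']^2\,r=O(r^{-5})$ is integrable at infinity, which is \eqref{gradint}.

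Uniqueness is where I expect the real difficulty. Given a second entire solution $(\tilde f_+,\tilde f_-)$ with the stated properties, I would try to show $\tilde f_\pm=f_\pm^\infty$ by sandwiching it between the finite-disk solutions, obtaining $f_\pm^R\le\tilde f_\pm$ for every $R$, hence $f_\pm^\infty\le\tilde f_\pm$, with the reverse inequality extracted from the decay at infinity. The obstruction is that when $B>0$ the system is competitive, so the scalar comparison principle fails componentwise and the ordering must be obtained from the full coupled structure together with $A_+A_--B^2>0$. The alternative, adapting the Brezis--Oswald convexity argument \cite{bo86} used for Proposition~\ref{existR} to the unbounded domain, runs into boundary terms at infinity that must be controlled using the algebraic decay established above; handling those boundary terms and the off-diagonal coupling simultaneously is the crux of the uniqueness proof.
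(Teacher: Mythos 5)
Your existence/regularity outline follows the natural (and, as far as it goes, the standard) route --- note that the paper itself does not prove this proposition in-text but quotes it from \cite{ag13} --- however the proposal has two genuine gaps. The first is the identification of the limit at infinity. You assert that a bounded entire solution of the asymptotically autonomous system ``must converge to an equilibrium,'' and that invertibility of the coefficient matrix makes $(t_+,t_-)$ the only admissible one. Neither half is justified: bounded radial solutions need not converge a priori, and even granting convergence, the invertibility argument only excludes equilibria with \emph{both} components nonvanishing. It says nothing about $(0,0)$ or the semi-trivial equilibria $f_\mp\equiv 0$, $f_\pm^2=t_\pm^2+ B t_\mp^2/A_\pm$, which exist whenever the right-hand side is positive (always, when $B>0$). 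Relatedly, your compactness extraction from the disk solutions could in principle collapse to the trivial solution, since $(0,0)$ solves the system on $(0,\infty)$; preventing this requires a uniform lower bound (in the one-component theory it comes from monotonicity of $f^R$ in $R$ via comparison, which here runs into exactly the competitive-coupling obstruction you yourself identify for $B>0$). Your $O(r^{-2})$ decay and hence \eqref{gradint} presuppose $g_\pm\to 0$, so they inherit this unresolved step.

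The second gap is decisive: uniqueness --- half of the proposition --- is not proven. You candidly describe two strategies (ordering against the finite-disk solutions; adapting the Brezis--Oswald argument of \cite{bo86} to the unbounded domain) together with the obstruction to each (failure of componentwise comparison for $B>0$; uncontrolled boundary terms at infinity), but you resolve neither, and what you call ``the crux of the uniqueness proof'' is precisely what is missing. In \cite{ag13} this is carried out: the sharp algebraic decay (the content of Theorem~\ref{asymptotics}, $f^\infty_\pm - t_\pm = O(r^{-2})$, $(f^\infty_\pm)' = O(r^{-3})$) is established first and is exactly what controls the boundary terms at infinity in the Brezis--Oswald-type convexity argument, so the logical order is decay first, then uniqueness. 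A complete write-up would have to execute that step rather than flag it; as it stands, the proposal proves at most existence of \emph{some} entire solution with properties \eqref{fpstive}--\eqref{gradint}, conditional on the limit-at-infinity issue above, and neither uniqueness nor the exclusion of degenerate limits.
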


To determine the shape of the vortex profiles, we first consider the asymptotic form of the solutions for $r\to\infty$.  We prove that (see \cite{ag13}):
\begin{theorem}\label{asymptotics}
Let $[f^\infty_{+}, f^\infty_{-}]$ be the entire solution of \eqref{radeq}.  Then we have 
$$
f^\infty_{\pm}=\dis t_{\pm}+\frac{a_{\pm}}{r^2}+O(r^{-4}),\quad [f^\infty_{\pm}]'(r)=-\frac{2a_{\pm}}{r^3} + O(r^{-5}), \ \ \text{as}\ \ r\to\infty,
$$
with 
\begin{equation}\label{apm}
a_{\pm}=\dis \frac{1}{2}\frac{B-A_{\mp}}{(A_{+}A_{-}-B^{2})t_{\pm}},
\end{equation}
\end{theorem}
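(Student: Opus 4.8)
The plan is to treat the autonomous system \eqref{radeq} on $[0,\infty)$ as a perturbation, near $r=\infty$, of a constant--coefficient linear system. Writing $g_\pm = f^\infty_\pm - t_\pm$ (so $g_\pm\to 0$) and expanding $f_\pm^2 - t_\pm^2 = 2t_\pm g_\pm + g_\pm^2$, the system becomes, after collecting the terms linear in $g$,
\be
-g'' - \tfrac1r g' + 2Mg = -\frac{1}{r^2}\begin{pmatrix} t_+\\ t_-\end{pmatrix} - \frac{1}{r^2}\,g - Q(g),\qquad g=\begin{pmatrix} g_+\\ g_-\end{pmatrix},
\ee
where $M=\begin{pmatrix} A_+t_+^2 & Bt_+t_- \\ Bt_+t_- & A_-t_-^2\end{pmatrix}$ and $Q(g)$ gathers the terms at least quadratic in $g$, smooth near $g=0$ with $|Q(g)|\le C|g|^2$. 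By hypothesis (H), $M$ is symmetric and positive definite. The dominant balance at infinity is $2Mg\approx -r^{-2}(t_+,t_-)^{T}$, that is $g\approx a\,r^{-2}$ with $a=-\tfrac12 M^{-1}(t_+,t_-)^{T}$; inverting the $2\times2$ matrix $M$ (with determinant $(A_+A_--B^2)t_+^2t_-^2>0$) reproduces exactly the coefficients $a_\pm$ in \eqref{apm}. It remains to justify this balance and to control the remainder.

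I would first diagonalize the leading operator. Since $2M$ is symmetric positive definite, write $2M = P\,\mathrm{diag}(\mu_1,\mu_2)\,P^{-1}$ with $\mu_i>0$ and set $v = P^{-1}g$; each mode then satisfies a scalar equation $-v_i'' - \tfrac1r v_i' + \mu_i v_i = Q_i(r)$ whose homogeneous solutions are the modified Bessel functions $I_0(\sqrt{\mu_i}\,r)$ and $K_0(\sqrt{\mu_i}\,r)$, growing and decaying exponentially with Wronskian proportional to $1/r$. The decaying solution is represented by two--sided variation of parameters against $I_0,K_0$, and since $I_0$ grows while $K_0$ decays exponentially, a forcing of size $O(s^{-\beta})$ produces a response of the same algebraic order $O(r^{-\beta})$, any homogeneous contribution being $O(e^{-\sqrt{\mu_i}\,r})$ and hence negligible. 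Reassembling the modes yields a vector Green operator $\mathcal G$ that maps $O(r^{-\beta})$ forcings to $O(r^{-\beta})$ functions for every $\beta>0$.

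With this linear structure in hand, I would close the argument by a contraction--mapping bootstrap on a half--line $[R_0,\infty)$. Let $g_0=\mathcal G\big(-r^{-2}(t_+,t_-)^{T}\big)$ be the forced solution of the linear problem; by the mapping property $g_0 = a\,r^{-2}+O(r^{-4})$, the $O(r^{-4})$ arising because inserting $a\,r^{-2}$ into $-\partial_r^2 - r^{-1}\partial_r$ generates a term of order $r^{-4}$, while matching the formal series order by order shows the $r^{-3}$ coefficient vanishes, so only even inverse powers occur. Writing $g = g_0 + \mathcal G\big(-r^{-2}g - Q(g)\big)$ defines a map $\mathcal T$ on the weighted space $X=\{\,w:\ \sup_{r\ge R_0} r^2|w(r)|<\infty\,\}$. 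Because $g\to 0$, on $[R_0,\infty)$ with $R_0$ large we have $|g|\le\epsilon$, so $Q$ is Lipschitz with small constant and $\mathcal T$ is a contraction on a ball of $X$; its unique fixed point coincides with $g$, giving $g = a\,r^{-2}+O(r^{-4})$ with $a_\pm$ as in \eqref{apm}. The derivative asymptotics $[f^\infty_\pm]'=-2a_\pm r^{-3}+O(r^{-5})$ then follow by differentiating the integral representation, using that $I_0',K_0'$ obey the same exponential bounds as $I_0,K_0$.

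The main obstacle is the passage from the mere convergence $g\to 0$ to the sharp algebraic rate: one must secure enough a priori decay to make the nonlinear term $Q(g)$ a genuine perturbation, and simultaneously handle the coupling of the two components, which defeats a naive scalar comparison because the off--diagonal entry $Bt_+t_-$ may be positive. Diagonalizing $2M$ and working in the weighted space $X$ circumvents this sign issue, but verifying the weighted mapping estimates for the Bessel Green operator — in particular that the exponentially growing factor $I_0$ does not spoil the algebraic decay of the response — is the technical crux. Once the $O(r^{-2})$ bound is in place, the improvement to the stated expansion and the identification \eqref{apm} of $a_\pm$ are routine.
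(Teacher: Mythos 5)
The paper itself does not prove Theorem~\ref{asymptotics}: it is quoted from the authors' earlier work \cite{ag13}, so there is no internal proof to compare against line by line. Judged on its own terms, your route is a legitimate and essentially standard one: linearize at the equilibrium $(t_+,t_-)$, and your algebra is correct — the linearization matrix is $2M$ with $M=\bigl(\begin{smallmatrix} A_+t_+^2 & Bt_+t_- \\ Bt_+t_- & A_-t_-^2\end{smallmatrix}\bigr)$, positive definite under (H), the forcing $-r^{-2}(t_+,t_-)^{T}$ comes from the $r^{-2}f_\pm$ term, and inverting $M$ (determinant $(A_+A_--B^2)t_+^2t_-^2$) does reproduce exactly $a_\pm=\tfrac12 (B-A_\mp)/\bigl((A_+A_--B^2)t_\pm\bigr)$ as in \eqref{apm}. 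Diagonalizing $2M$ to decouple the system, representing decaying solutions through the modified Bessel pair $I_0,K_0$, and noting that the exponentially localized Green kernel maps $O(r^{-\beta})$ forcings to $O(r^{-\beta})$ responses, is the right mechanism, and it correctly sidesteps the sign problem with the off-diagonal entry $Bt_+t_-$ that would defeat a naive scalar comparison.

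Two steps deserve more than the one sentence you give them. First, the identification of the entire solution $g$ with the fixed point of $\mathcal T$ in the weighted space $X=\{w:\sup r^2|w|<\infty\}$ is not automatic: contraction gives you \emph{a} solution in $X$, but you only know a priori that $g\to 0$, not that $g\in X$. The repair is standard but must be stated: write the actual $g$ through the same two-sided Green representation with forcing $h=-r^{-2}(t_+,t_-)^T-r^{-2}g-Q(g)$, note $|h|\le Cr^{-2}+\epsilon(r)|g|$ with $\epsilon(r)\to0$, and use the exponential localization of the kernel to bootstrap $g=o(1)$ up to $g=O(r^{-2})$; only then are you inside the ball where your contraction argument applies (modulo the exponentially small homogeneous $K_0$ contributions, which you correctly discard). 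You flag this as ``the technical crux,'' which is honest, but the flagged estimate is exactly what makes the proof, so it cannot be left as a gesture.

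Second, the derivative asymptotics do not follow merely ``by differentiating the integral representation.'' If $\rho=g-ar^{-2}$, your scheme gives $\rho=\mathcal G h_\rho$ with $h_\rho=O(r^{-4})$, and differentiating that representation yields only $\rho'=O(r^{-4})$ — the Green kernel does not gain a power of $r$ upon differentiation — which is not enough for $[f^\infty_\pm]'=-2a_\pm r^{-3}+O(r^{-5})$. The fix is available inside your own framework: iterate the bootstrap one more step. The formal matching you mention shows the $r^{-3}$ coefficient vanishes, and the same argument at the next odd order shows the $r^{-5}$ coefficient vanishes (the forcing at that order is built from even-power terms only), so $g=ar^{-2}+cr^{-4}+\rho_2$ with $h_{\rho_2}=O(r^{-6})$, whence $\rho_2'=O(r^{-6})$ and $g'=-2ar^{-3}-4cr^{-5}+O(r^{-6})=-2ar^{-3}+O(r^{-5})$. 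With these two points written out, the proposal is a complete and self-contained proof of the theorem.
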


We also consider the question of monotonicity of the solution profiles. The validity of this property is strongly dependent on the value of $B$.  In  \cite{ag13} we prove:

\begin{theorem}\label{monotone}
Let $A_+, A_->0$ be fixed, and $B$ such that $B^2<A_+A_-$.
Assume $\Psi^\infty(x;B)=[f^\infty_{+}(r;B)e^{i\tta},f^\infty_{-}(r;B)e^{i\tta}]$ is the entire equivariant solution for those parameters $A_\pm,B$.
\begin{enumerate}[{\bf (i)}]
\item If $B<0$, then $[f^\infty_{\pm}]'(r;B)\ge 0$ for all $r>0$.


\item  There exists $B_0>0$ such that $[f^\infty_\pm]'(r;B)\ge 0$ for all $r>0$ and all $B$ with $0\le B\le B_0$.
\end{enumerate}
\end{theorem}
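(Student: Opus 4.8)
The plan is to study the derivatives $g_\pm := [f^\infty_\pm]'$ directly, by differentiating the radial system \eqref{radeq} in $r$ and treating the resulting linear system for $(g_+,g_-)$ with a maximum principle. Writing $W_\pm := A_\pm((f^\infty_\pm)^2-t_\pm^2)+B((f^\infty_\mp)^2-t_\mp^2)$, differentiation of \eqref{radeq} gives, for the $+$ component,
\begin{equation*}
-g_+'' - \tfrac1r g_+' + \tfrac{2}{r^2}g_+ + \big(W_+ + 2A_+(f^\infty_+)^2\big)g_+ = \tfrac{2}{r^3}f^\infty_+ - 2B\,f^\infty_+ f^\infty_-\, g_-,
\end{equation*}
and symmetrically for $g_-$. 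Two features are immediate: the source terms $\tfrac{2}{r^3}f^\infty_\pm$ are strictly positive by \eqref{fpstive}, and the off-diagonal coupling coefficient is $-2Bf^\infty_+f^\infty_-$, which is $\ge 0$ precisely when $B\le 0$. Thus for $B<0$ the system for $(g_+,g_-)$ is cooperative with nonnegative sources. I would first record the sign of $g_\pm$ at the two ends: near $r=0$, \eqref{fnear0} gives $f^\infty_\pm\sim r$ and hence $g_\pm(0^+)>0$, while Theorem~\ref{asymptotics} gives $g_\pm = -2a_\pm r^{-3}+O(r^{-5})$ with $a_\pm=\tfrac12(B-A_\mp)/((A_+A_--B^2)t_\pm)$; since $B<0<A_\mp$, we have $a_\pm<0$, so $g_\pm>0$ for all large $r$ as well.

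For part (i) I would then argue by contradiction, supposing $\min_r\min(g_+,g_-)<0$. By the end behaviour just recorded this minimum is attained at an interior point $r_0$, say by $g_+$, where $g_+(r_0)=m<0$, $g_+'(r_0)=0$, $g_+''(r_0)\ge0$. Evaluating the differentiated equation at $r_0$ and using cooperativity (the coupling term has the favourable sign because $-2Bf^\infty_+f^\infty_-\ge0$ and $g_-(r_0)\ge m$) together with the strictly positive source should force $g_+''(r_0)<0$, a contradiction. The main obstacle is exactly this sign bookkeeping: the zeroth-order diagonal coefficient $\tfrac{2}{r^2}+W_\pm+2A_\pm(f^\infty_\pm)^2$ need not be nonnegative where $f^\infty_\pm$ is small, so the naive maximum principle does not close. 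I would resolve this by combining the two equations at $r_0$ — replacing $g_-(r_0)$ by the lower bound $m$ and absorbing the coupling into the diagonal — which reduces the question to checking that the single scalar quantity $\tfrac{2}{r^2}+W_++2A_+(f^\infty_+)^2+2Bf^\infty_+f^\infty_-$ is nonnegative at $r_0$; this in turn is controlled using the a priori bound of Lemma~\ref{uniformbound} and the fact that $B<0$ renders the cross terms cooperative. If a pointwise sign cannot be extracted directly, the fallback is a sweeping (continuity-in-a-barrier) argument: one slides a strictly positive subsolution and shows it can never first touch $(g_+,g_-)$ from below, again using the positive source to rule out equality at an interior contact point.

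For part (ii) I would use a continuation argument in $B$ starting from $B=0$. At $B=0$ the system \eqref{radeq} decouples into two copies of the scalar degree-one Ginzburg--Landau profile equation, whose entire solution is classically strictly increasing with $a_\pm=-A_\mp/(2A_+A_-t_\pm)<0$, so $g_\pm>0$ on $(0,\infty)$ and $g_\pm\to 0^+$ at infinity. Monotonicity is then an open condition: by continuous dependence of $f^\infty_\pm(\cdot;B)$ and $g_\pm(\cdot;B)$ on $B$ (standard ODE theory together with the construction in \cite{ag13}), and since $a_\pm(B)<0$ persists for $0\le B<\min(A_+,A_-)$, the far-field positivity of $g_\pm$ is maintained, while near $r=0$ the value $g_\pm(0^+;B)$ varies continuously and stays positive. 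One then defines $B_0$ as the supremum of those $B>0$ for which $g_\pm(\,\cdot\,;B)\ge0$, the content of the statement being that $B_0>0$.

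I expect two principal obstacles. The first is the coupled sign analysis behind part (i): isolating a region or a combined quantity where the diagonal coefficient is provably nonnegative, so that the cooperative maximum principle actually applies. The second is uniformity in $B$ for part (ii): I must exclude a zero of $g_\pm$ nucleating either on a fixed compact interval (handled by uniform convergence to the strictly positive $B=0$ profiles) or in the transition to the far field (handled by making the $O(r^{-5})$ remainder in Theorem~\ref{asymptotics} uniform in $B$ on $[0,B_0]$), and it is the uniform matching of these near-field and far-field regions that is the delicate step.
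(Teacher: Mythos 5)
First, a point of comparison: this paper does not actually prove Theorem~\ref{monotone} --- it is quoted from the companion paper \cite{ag13} (``In \cite{ag13} we prove:''), so there is no in-paper proof to measure you against; your proposal has to stand on its own. Your setup is correct: differentiating \eqref{radeq} does give
$-g_+'' - \tfrac1r g_+' + \tfrac{2}{r^2}g_+ + \bigl(W_+ + 2A_+(f^\infty_+)^2\bigr)g_+ = \tfrac{2}{r^3}f^\infty_+ - 2B f^\infty_+ f^\infty_- g_-$ with $g_\pm=[f^\infty_\pm]'$, the system is cooperative for $B\le 0$, and the endpoint signs from \eqref{fnear0} and Theorem~\ref{asymptotics} (with $a_\pm<0$ when $B<\min\{A_+,A_-\}$) are right. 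But the crux of part (i) has a genuine gap, and it is exactly at the step you flagged. At an interior negative minimum $r_0$ of $\min(g_+,g_-)$, say attained by $g_+$ with value $m<0$, the bookkeeping gives
\begin{equation*}
\Bigl(\tfrac{2}{r_0^2}+W_+ + 2A_+(f^\infty_+)^2 + 2Bf^\infty_+f^\infty_-\Bigr)\,m \;\ge\; \tfrac{2}{r_0^3}f^\infty_+(r_0) \;>\;0,
\end{equation*}
so the contradiction requires precisely the scalar quantity you identified to be nonnegative at $r_0$. It is not, in general: as $r\to\infty$ it converges to $2t_+(A_+t_+ + Bt_-)$, which is negative for admissible parameters --- take $A_+=1$, $A_-=100$, $B=-9$, $t_\pm=1$ (hypothesis (H) holds since $81<100$), giving the limit $-16$. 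Lemma~\ref{uniformbound} cannot rescue this: it is only an \emph{upper} bound on $|\Psi|$, while the failure occurs where $f^\infty_\pm$ are already close to $t_\pm$, i.e.\ for all $r$ beyond a fixed $O(1)$ radius. Since the asymptotics only guarantee $g_\pm>0$ beyond some \emph{unknown} radius (the expansion has no quantitative onset), you cannot exclude $r_0$ from the bad region, and the maximum principle does not close. Your fallback is also circular: the natural positive weights for a ground-state substitution or sweeping argument are $(f^\infty_\pm/r, [f^\infty_\pm]')$, which solve the homogeneous version of this linear system (this is the pair $(\widetilde F_\pm,\widetilde K_\pm)$ used in Section~5), but their positivity --- specifically $[f^\infty_\pm]'\ge 0$ --- is exactly what is to be proven.

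Part (ii) is in better shape: continuation/compactness from the decoupled $B=0$ problem, where the scalar degree-one profile is classically strictly increasing and $a_\pm(0)=-A_\mp/(2A_+A_-t_\pm)<0$, together with a near-field/compact/far-field splitting, is a credible route (and likely close in spirit to what \cite{ag13} does). But as written it rests on two unestablished inputs: continuity of $B\mapsto f^\infty_\pm(\cdot;B)$ for the \emph{entire} solutions, which requires combining uniform a priori bounds, compactness, and the uniqueness of Proposition~\ref{lemunique} (not just ``standard ODE theory,'' since these are solutions on $(0,\infty)$ selected by behavior at both ends), and uniformity in $B$ of the $O(r^{-4})$ and $O(r^{-5})$ remainders in Theorem~\ref{asymptotics}, which you correctly flag as the delicate matching step but do not supply. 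So part (ii) is an honest and repairable outline, while part (i) as proposed would fail at its central step for a genuine portion of the parameter range allowed by (H); note that the theorem's part (i) claims monotonicity for \emph{all} $B\in(-\sqrt{A_+A_-},0)$, including regimes like the example above where your scalar coefficient is negative on an unbounded set.
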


Note that by the asymptotic expansion derived in Theorem~\ref{asymptotics}, when $0<\min\{ A_+, A_-\}<B<\sqrt{A_+A_-}$,
one of the functions $f^\infty_\pm(r)$ approaches its limit value $t_\pm$ from above, and hence it cannot be monotone.  Indeed, the form of the coefficient $a_\pm$ in \eqref{apm} suggests that the optimum value of $B_0$ in Theorem~\ref{monotone} should be
$B_*:=\min\{ A_+, A_-\}$.  Unfortunately, the proof of part (ii) in Theorem~\ref{monotone} does not provide an explicit value for $B_0$, but we conjecture that monotonicity should indeed hold for $0<B<B_*$.

\section{Decomposition of $E''_\lambda (\Psi)$}

We begin with a decomposition of the second variation of energy $E_\lambda (\Psi)$ in \eqref{energylam}. The decomposition procedure follows that 
of \cite{m95} (for the classical GL equation) and \cite{abm13} (for the ``balanced" case), but we provide all details here for the general case.
%
The associated second variation of $E_\lambda (\Psi)$ around $\Psi=(\psi_{+}, \psi_{-})$ in direction $\Phi=(\phi_{+}, \phi_{-})\in H^{1}_{0}(\mathbb{D}_{1}; \mathbb{C}^{2})$ is 
\begin{multline}\label{2ndvarE}
E_{\lambda}''(\Psi)[\Phi]=\dis\int_{\mathbb{D}_{1}}|\nabla\Phi|^2 +\lambda [A_{+}(|\psi_{+}|^2-t^2_{+})+B(|\psi_{-}|^2-t^2_{-})] |\phi_{+}|^2\\
                              +\lambda [A_{-}(|\psi_{-}|^2-t^2_{-})+B(|\psi_{+}|^2-t^2_{+})] |\phi_{-}|^2\\
                              +2\lambda [A_{+}\langle\psi_{+}, \phi_{+}\rangle^{2}+A_{-}\langle\psi_{-}, \phi_{-}\rangle^{2}
                              +2B\langle\psi_{+}, \phi_{+}\rangle \langle\psi_{-}, \phi_{-}\rangle]\ ,
\end{multline}
with $\langle u,v\rangle=\text{Re}(\bar{u}v)={{\bar{u}v+u\bar{v}}\over 2}$.
Denote by $\Psi_{rad}=(f_+(r;\lambda)e^{i\theta}, f_-(r;\lambda)e^{i\theta})$, the unique equivariant solution to \eqref{dirichlet} in $\mathbb{D}_1$ with parameter $\lambda$.  We will often suppress the dependence on $\lambda$ to simplify notation, when its dependence on $\lambda$ is not an essential issue.

First, we note that for $\lambda$ small enough, there are no other solutions to Dirichlet problem \eqref{dirichlet}:
\begin{prop}
There exists $\lambda^*$ so that for every $\lambda<\lambda^*$ the unique solution to \eqref{dirichlet} is $\Psi=[\psi_+, \psi_- ]=[f_{+}(r)e^{i\theta}, f_{-}(r)e^{i\theta}]$.
\end{prop}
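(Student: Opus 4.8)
The plan is to exploit the fact that for small $\lambda$ the Dirichlet energy dominates the nonlinear potential, so that the Euler--Lagrange operator is strictly monotone on the affine space of maps satisfying \eqref{boundary}; uniqueness of the critical point then follows by a standard energy comparison. Concretely, suppose $\Psi_1$ and $\Psi_2$ are both solutions of \eqref{dirichlet}. Writing the nonlinear term as a vector field
\[
 N(\Psi) = \left( [A_+(|\psi_+|^2 - t_+^2) + B(|\psi_-|^2 - t_-^2)]\psi_+,\; [A_-(|\psi_-|^2 - t_-^2) + B(|\psi_+|^2 - t_+^2)]\psi_- \right),
\]
both solve $-\Delta\Psi + \lambda\, N(\Psi) = 0$ with the same boundary data, so $W := \Psi_1 - \Psi_2 \in H^1_0(\mathbb{D}_1;\mathbb{C}^2)$. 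Testing the difference of the two weak formulations against $W$ (using the real inner product $\langle\cdot,\cdot\rangle$ summed over both components) gives
\[
 \int_{\mathbb{D}_1} |\nabla W|^2 \,dx = -\lambda \int_{\mathbb{D}_1} \langle N(\Psi_1) - N(\Psi_2),\, W\rangle \, dx.
\]

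First I would record a uniform $L^\infty$ bound. Since $\psi_\pm = t_\pm e^{i\theta}$ on $\partial\mathbb{D}_1$, every solution satisfies $|\Psi|^2 = t_+^2 + t_-^2$ on the boundary, so Lemma~\ref{uniformbound} applies and yields $|\Psi_j(x)| \le \Lambda$ on $\mathbb{D}_1$, with $\Lambda$ depending only on $A_\pm, B, t_\pm$ and \emph{not} on $\lambda$. Because $N$ is a polynomial (hence smooth) vector field, it is Lipschitz on the ball $\{|\Psi|\le\Lambda\}\subset\mathbb{C}^2$; let $C = C(A_\pm, B, t_\pm, \Lambda)$ be its Lipschitz constant there. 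Then $|N(\Psi_1) - N(\Psi_2)| \le C|W|$ pointwise, and Cauchy--Schwarz gives
\[
 \int_{\mathbb{D}_1} |\nabla W|^2 \,dx \le \lambda C \int_{\mathbb{D}_1} |W|^2 \, dx.
\]
Applying the Poincar\'e inequality $\mu_1 \int |W|^2 \le \int |\nabla W|^2$ on $H^1_0(\mathbb{D}_1)$, with $\mu_1$ the first Dirichlet eigenvalue of $-\Delta$, we obtain $\int|\nabla W|^2 \le (\lambda C/\mu_1)\int|\nabla W|^2$. Choosing $\lambda^* := \mu_1/C$, for any $\lambda < \lambda^*$ this forces $\int|\nabla W|^2 = 0$, hence $W \equiv 0$ and $\Psi_1 = \Psi_2$. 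Since the equivariant solution $[f_+(r)e^{i\theta}, f_-(r)e^{i\theta}]$ is a solution by Proposition~\ref{existR}, it must be the unique one.

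The only delicate point is that the a priori bound $\Lambda$, and hence the Lipschitz constant $C$, must be independent of $\lambda$; otherwise the threshold $\lambda^* = \mu_1/C$ could degenerate. This is exactly what Lemma~\ref{uniformbound} supplies: its maximum-principle proof controls $|\Psi|^2$ through the sign of the reaction terms, which the positive factor $\lambda$ does not affect, so the same $\Lambda$ works for every $\lambda>0$. Everything else is routine, and the argument parallels the classical single-component statement recalled in the introduction, where the uniqueness threshold is likewise governed by the first Dirichlet eigenvalue.
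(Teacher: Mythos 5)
Your proof is correct, but it follows a genuinely different route from the paper's. The paper works variationally: it defines the convex set $\mathcal{B}=\{\Psi\in\mathbf{H}: |\Psi|\le\Lambda\}$ (with $\Lambda$ from Lemma~\ref{uniformbound}, so every solution of \eqref{dirichlet} lies in $\mathcal{B}$), shows that the second variation satisfies $E''_\lambda(\Psi)[\Phi]\ge\int_{\mathbb{D}_1}|\nabla\Phi|^2-\lambda C|\Phi|^2$ uniformly over $\Psi\in\mathcal{B}$ by discarding the manifestly nonnegative quadratic term and bounding the indefinite potential terms by constants, and concludes that for $\lambda C$ below the first Dirichlet eigenvalue the functional $E_\lambda$ is strictly convex on $\mathcal{B}$, hence has a unique critical point. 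You instead subtract two solutions and run a monotonicity/contraction estimate: testing the difference of the weak formulations against $W=\Psi_1-\Psi_2$, using the Lipschitz constant of the cubic nonlinearity $N$ on the ball $\{|\Psi|\le\Lambda\}$, and closing with Poincar\'e. Both arguments rest on the same two pillars --- the $\lambda$-independent a priori bound of Lemma~\ref{uniformbound} (and you correctly flagged the one subtlety, namely that the lemma is stated for the $\lambda$-free system \eqref{Reqns} but the factor $\lambda>0$ drops harmlessly out of the maximum-principle computation, or equivalently one rescales to $\mathbb{D}_{\sqrt\lambda}$, where the sup norm is unchanged) and the first Dirichlet eigenvalue $\mu_1$, with thresholds of the form $\lambda^*=\mu_1/C$ for parameter-dependent constants $C$ that differ numerically between the two proofs. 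Your argument is more elementary, since it never invokes the second variation; the paper's convexity argument buys slightly more, namely that $E''_\lambda$ is strictly positive at every $\Psi\in\mathcal{B}$ for small $\lambda$ (so the unique solution is automatically stable and minimizing), which dovetails with the stability analysis that occupies the rest of the paper.
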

\begin{proof}
We first define the convex set $\mathcal{B}=\{\Psi \in \mathbf{H} : |\Psi|\le\Lambda\ \text{in}\ \mathbb{D}_1 \},$ with $\Lambda$ as defined in Lemma~\ref{uniformbound}. 
By Lemma~\ref{uniformbound} any solution of 
\eqref{dirichlet} lies in $\mathcal{B}$. For any $\Psi\in\mathcal{B}$ and together with the positive definite condition (H), we have
{\allowdisplaybreaks
\begin{align*}
E_{\lambda}''(\Psi)[\Phi]&\ge\int_{\mathbb{D}_1}|\nabla\Phi|^{2}+\lambda [A_{+}(|\psi_{+}|^2-t^2_{+})+B(|\psi_{-}|^2-t^2_{-})] |\phi_{+}|^2\\
                              &\qquad\qquad+\lambda [A_{-}(|\psi_{-}|^2-t^2_{-})+B(|\psi_{+}|^2-t^2_{+})] |\phi_{-}|^2\\
                              &\ge\int_{\mathbb{D}_1}|\nabla\Phi|^{2}-\lambda[(A_{+}t^2_{+}+|B|t^2_- )|\phi_{+}|^2 +(A_{-}t^2_{-}+|B|t^2_+ )|\phi_{-}|^2 ]\\
                              &\ge\int_{\mathbb{D}_1}|\nabla\Phi|^{2}-\lambda[A_{+}t^2_{+}+A_{-}t^2_{-}+|B|(t^2_+ +t^2_- )]|\Phi|^{2}\\
                              &=\int_{\mathbb{D}_1}|\nabla\Phi|^{2}-\lambda C|\Phi|^{2} ,
\end{align*}}
with constant $C=C(A_{\pm},|B|, t_\pm )\ge0$ independent of $\lambda, \Phi$. By choosing $\lambda^*$ sufficiently small, $C\lambda$ will be smaller than the first Dirichlet eigenvalue of the Laplacian in $\mathbb{D}_1$, and we may conclude that $E_{\lambda}''(\Psi)[\Phi]$ is a strictly positive definite quadratic form on 
$H^1_{0}(\mathbb{D}_1; \mathbb{C})$, for any $\Psi\in\mathcal{B}$. Thus, $E_\lambda (\Psi)$ is strictly convex on $\mathcal{B}$, and hence it has a unique critical point.
\end{proof}

The stability or instability of the symmetric vortex solutions will be determined by the sign of the ground-state eigenvalue of the linearization  of the energy around $\Psi$ in \eqref{2ndvarE}.
Motivated by \cite{m95}, we will decompose an arbitrary test function $\Phi$ in its Fourier modes, and write the linearized operator $E_{\lambda}''(\Psi)[\Phi] $ as a direct sum.

For $\Phi=[\phi_+, \phi_- ]$, each $\phi_{\pm}\in H^{1}_{0}(\mathbb{D}_{1}; \mathbb{C})$ can be written in its Fourier modes in $\theta$
\begin{equation*}
\phi_{\pm}=\dis\sum_{n\in\mathbb{Z}}b^{\pm}_{n}(r)e^{in\tta},
\end{equation*}
where $b^{\pm}_{n}(r)\in H^{1}_{\text{loc}}((0,1]; \mathbb{C})$. Using above Fourier decomposition, we may evaluate each term of the second variation
$E''_{\lambda}(\Psi)[\Phi]$.  First,
\begin{align*}
\int_{\mathbb{D}_{1}}|\nabla\Phi|^2 
  &=\int^{2\pi}_{0}\int^1_{0}\sum_{n\in\mathbb{Z}}\left\{|(b^{+}_{n})'|^{2}+|(b^{-}_{n})'|^{2}+|b^+_{n}|^{2}{n^{2}\over r^2 }+|b^-_{n}|^{2}{n^{2}\over r^2 }\right\}rdr\\
   &  =\int^{2\pi}_{0}\int^1_{0}\sum^{\infty}_{n=1}\left\{ |(b^+_{n+1})'|^{2}+|(b^-_{n+1})'|^{2}+{(n+1)^{2}\over r^2}|b^{+}_{n+1}|^{2}+{(n+1)^{2}\over r^2}|b^{-}_{n+1}|^{2}\right\}rdr\\
   &\qquad+\int^{2\pi}_{0}\int^1_{0}\sum^{\infty}_{n=0} \left\{ |(b^+_{1-n})'|^{2}+|(b^-_{1-n})'|^{2}+{(1-n)^{2}\over r^2}|b^{+}_{1-n}|^{2}+{(1-n)^{2}\over r^2}|b^{-}_{1-n}|^{2}\right\}rdr .
\end{align*}
Next,
{\allowdisplaybreaks
\begin{align*}
\int_{\mathbb{D}_1 }\langle\psi_{\pm}, \phi_{\pm}\rangle^{2}&=\dis \sum_{n\in\mathbb{Z}}\int^{2\pi}_{0}\int^1_{0}\langle f_{\pm}e^{i\tta}, b^{\pm}_{n}e^{in\tta}\rangle^2 rdrd\tta\\
   &={\pi\over 2}\int^1_{0}f^2_{\pm}\sum_{n\in\mathbb{Z}}|b^{\pm}_{n+1}+\overline{b^{\pm}_{1-n}}|^2 rdr\\
   &={\pi\over 2}\int^1_{0}f^2_{\pm}\left\{ \sum^{\infty}_{n=0}|b^{\pm}_{n+1}+\overline{b^{\pm}_{1-n}}|^2 +\sum^{-1}_{n=-\infty}|b^{\pm}_{n+1}+\overline{b^{\pm}_{1-n}}|^2 \right\}rdr\\
   &={\pi\over 2}\int^1_{0}f^2_{\pm}\left\{ \sum^{\infty}_{n=0}|b^{\pm}_{n+1}+\overline{b^{\pm}_{1-n}}|^2 +\sum^{\infty}_{n=1}|b^{\pm}_{n+1}+\overline{b^{\pm}_{1-n}}|^2\right\}rdr,
\end{align*}}
where by changing the index we obtain the last identity.

Using the orthogonality, we obtain
{\allowdisplaybreaks
\begin{align*}
&\int_{\mathbb{D}_{1}}\langle\psi_{+},\phi_{+}\rangle\langle\psi_{-},\phi_{-}\rangle\\
      &\qquad=\int^{2\pi}_{0}\int^1_{0}\sum_{n,m}\langle f_{+}e^{i\tta}, b^+_{n}e^{in\tta}\rangle\langle f_{-}e^{i\tta}, b^-_{m}e^{im\tta}\rangle rdrd\tta\\
      &\qquad=\sum_{n,m}{1\over 4}\int^{2\pi}_{0}\int^1_{0}f_{+}f_{-}[b^+_{n}b^-_{m}e^{i(m+n-2)\tta}+b^+_{n}\overline{b^-_{m}}e^{i(n-m)\tta}\\
      &\qquad\qquad\qquad +\overline{b^+_n}b^-_{m}e^{i(m-n)\tta}+\overline{b^+_{n}}\overline{b^-_{m}}e^{i(2-n-m)\tta}]rdrd\tta\\
      &\qquad=\sum_{n+m=2}{\pi\over 2}\int^1_{0}f_{+}f_{-}(b^+_n b^-_m +\overline{b^+_n}\overline{b^-_m})rdr
      +\sum_{n=m}{\pi\over 2}\int^1_{0}f_{+}f_{-}(b^+_n \overline{b^-_m} +\overline{b^+_n}b^-_{m})rdr\\
      &\qquad =\sum_{n\in\mathbb{Z}}{\pi\over 2}\int^1_{0}f_{+}f_{-}(b^+_n b^-_{2-n}+\overline{b^+_n}\overline{b^-_{2-n}}+b^+_n \overline{b^-_n}+\overline{b^+_n}b^-_n )rdr\\
      &\qquad =\sum_{n\in\mathbb{Z}}{\pi\over 2}\int^1_{0}f_{+}f_{-}(b^+_{1+n} b^-_{1-n}+\overline{b^+_{1-n}}\overline{b^-_{1+n}}+b^+_{1+n} \overline{b^-_{1+n}}+\overline{b^+_{1-n}}b^-_{1-n} )rdr\\
      &\qquad=\left(\sum^\infty_{n=0}+\sum^{-1}_{n=-\infty}\right)\left\{{\pi\over 2}\int^1_{0}f_{+}f_{-}(b^+_{1+n} b^-_{1-n}+\overline{b^+_{1-n}}\overline{b^-_{1+n}}+b^+_{1+n} \overline{b^-_{1+n}}+\overline{b^+_{1-n}}b^-_{1-n} )rdr\right\}\\
      &\qquad=\sum^\infty_{n=0}{\pi\over 2}\int^1_{0}f_{+}f_{-}(b^+_{1+n} b^-_{1-n}+\overline{b^+_{1-n}}\overline{b^-_{1+n}}+b^+_{1+n} \overline{b^-_{1+n}}+\overline{b^+_{1-n}}b^-_{1-n} )rdr\\
      &\qquad\qquad +\sum^\infty_{n=1}{\pi\over 2}\int^1_{0}f_{+}f_{-}(b^+_{1-n} b^-_{1+n}+\overline{b^+_{1+n}}\overline{b^-_{1-n}}+b^+_{1-n} \overline{b^-_{1-n}}+\overline{b^+_{1+n}}b^-_{1+n} )rdr ,
\end{align*}}
and,
{\allowdisplaybreaks
\begin{align*}
\int_{\mathbb{D}_1 }|\phi_{\pm}|^{2}&=\int^{2\pi}_0\int^1_0 \sum_{n\in\mathbb{Z}} |b^\pm_n e^{in\tta}|^2 rdrd\tta\\
                                                                &=\int^{2\pi}_0\int^1_0 \sum_{n\in\mathbb{Z}} |b^\pm_n |^2 rdrd\tta\\
                                                                &=2\pi \int^1_0 \sum_{n\in\mathbb{Z}}|b^\pm_n |^2 rdr\\
                                                                &=2\pi \int^1_0 \left\{\sum^\infty_{n=1}|b^\pm_{n+1}|^{2}+\sum^\infty_{n=0}|b^\pm_{1-n}|^{2} \right\}rdr.
\end{align*}}

Therefore, we have the following quadratic forms associated
to \eqref{2ndvarE}: 
{\allowdisplaybreaks
\begin{align*}
&\mathcal{Q}^{(n)}_{\lambda}(b^{\pm}_{1+n}, b^{\pm}_{1-n})\\
&\qquad\dis=2\pi\int^{1}_{0}\left[|(b^{+}_{1+n})'|^{2}+|(b^{+}_{1-n})'|^{2}+{(1+n)^{2}\over r^2}|b^{+}_{1+n}|^{2}+{(1-n)^{2}\over r^2}|b^{+}_{1-n}|^{2}\right]rdr\\
&\quad\qquad \dis +2\pi\int^{1}_{0}\left[|(b^{-}_{1+n})'|^{2}+|(b^{-}_{1-n})'|^{2}+{(1+n)^{2}\over r^2}|b^{-}_{1+n}|^{2}+{(1-n)^{2}\over r^2}|b^{-}_{1-n}|^{2}\right]rdr\\
               &\quad\qquad \dis +2\lambda\pi\int^1_{0}\left[A_{+}f^2_{+}|b^{+}_{1+n}+\overline{b^{+}_{1-n}}|^{2}+A_{-}f^2_{-}|b^{-}_{1+n}+\overline{b^{-}_{1-n}}|^{2}\right]rdr\\
               &\quad\qquad  \dis +4\lambda\pi B\int^1_{0}f_{+}f_{-}[\langle b^{+}_{1+n}+\overline{b^{+}_{1-n}}, b^-_{1+n}\rangle+\langle b^+_{1-n}+\overline{b^{+}_{1+n}}, b^{-}_{1-n}\rangle]rdr\\
              &\quad\qquad \dis +2\lambda\pi\int^1_{0}[A_{+}(f^2_{+}-t^2_{+})+B(f^2_{-}-t^2_{-})](|b^{+}_{1+n}|^{2}+|b^+_{1-n}|^{2})rdr\\
               &\quad\qquad \dis +2\lambda\pi\int^1_{0}[A_{-}(f^2_{-}-t^2_{-})+B(f^2_{+}-t^2_{+})](|b^{-}_{1+n}|^{2}+|b^-_{1-n}|^{2})rdr
\end{align*}}
for $n\neq0$, and
{\allowdisplaybreaks
\begin{align*}
&\mathcal{Q}^{(0)}_{\lambda}(b^{\pm}_{1})\\
&\quad=\dis 2\pi\int^1_{0}\left\{|(b^{+}_{1})'|^{2}+|(b^{-}_{1})'|^{2}+{1\over r^2}(|b^+_{1}|^{2}+|b^-_{1}|^{2})\right\}rdr\\
&\qquad \dis +\lambda\pi\int^1_{0}\left\{A_{+}f^2_{+}|b^{+}_{1}+\overline{b^{+}_{1}}|^{2}+A_{-}f^2_{-}|b^{-}_{1}+\overline{b^{-}_{1}}|^{2}+4Bf_{+}f_{-}\langle b^+_{1}+\overline{b^+_1}, b^-_{1}\rangle\right\}rdr\\
&\qquad\dis +2\lambda\pi\int^1_{0}\left\{[A_{+}(f^2_{+}-t^2_{+})+B(f^2_{-}-t^2_{-})]|b^{+}_{1}|^{2}+[A_{-}(f^2_{-}-t^2_{-})+B(f^2_{+}-t^2_{+})]|b^{-}_{1}|^{2}\right\}rdr\\
&\quad=\dis 2\pi\int^1_{0}\left\{|(b^{+}_{1})'|^{2}+|(b^{-}_{1})'|^{2}+{1\over r^2}(|b^+_{1}|^{2}+|b^-_{1}|^{2})\right\}rdr\\
&\qquad+2\lambda\pi\int^1_{0}\left\{[A_{+}(f^2_{+}-t^2_{+})+B(f^2_{-}-t^2_{-})]|b^{+}_{1}|^{2}+[A_{-}(f^2_{-}-t^2_{-})+B(f^2_{+}-t^2_{+})]|b^{-}_{1}|^{2}\right\}rdr\\
&\qquad+\dis 4\lambda\pi\int^1_{0}\left\{A_{+}f^2_{+}(\text{Re}b^+_{1})^{2}+A_{-}f^2_{-}(\text{Re}b^-_{1})^{2}+2Bf_{+}f_{-}\text{Re}(b^+_{1})\text{Re}(b^-_{1})\right\}rdr .
\end{align*} }
Therefore, we can represent \eqref{2ndvarE} as a sum of decoupled quadratic forms,
\begin{equation}\label{2ndvarEreform}
E''_{\lambda}(\Psi)[\Phi]=\dis \mathcal{Q}^{(0)}_{\lambda}(b^{\pm}_{1})+\mathcal{Q}^{(1)}_{\lambda}(b^{\pm}_{2}, b^{\pm}_{0})+\sum^{\infty}_{n=2}\mathcal{Q}^{(n)}_{\lambda}(b^{\pm}_{1+n}, b^{\pm}_{1-n}) .
\end{equation}

Consquently, the operator $\mathcal{L}_{\lambda}$ associated to $E''_{\lambda}(\Psi)[\Phi]$ can be identified with a direct sum of self-adjoint operators, acting on blocks of Fourier modes,
\begin{equation*}
\mathcal{L}_{\lambda}(\Phi)\cong\bigoplus^\infty_{n=0}\mathcal{L}^{(n)}_{\lambda}(b^\pm_{n+1}, b^\pm_{1-n}),
\end{equation*}
where the operators $\mathcal{L}^{(n)}_\lambda$ are associated to the quadratic forms $\mathcal{Q}^{(n)}_\lambda$.  Define (in Fourier Space)
$$
\widetilde{\mathcal{L}}_{\lambda}\widetilde{\Phi}:=\bigoplus_{n\neq1}\mathcal{L}^{(n)}_{\lambda}(b^\pm_{n+1}, b^{\pm}_{1-n}),
$$
where $\widetilde{\Phi}=[\widetilde{\phi}_{+}, \widetilde{\phi}_{-}]=\left[\sum_{n\neq0,2}b^{+}_{n}e^{in\theta}, \sum_{n\neq0,2}b^{-}_{n}e^{in\theta}\right]$, and so 
$\mathcal{L}_{\lambda}(\Phi)\cong\mathcal{L}^{(1)}_{\lambda}(b^{\pm}_{2}, b^\pm_{0})\oplus\widetilde{\mathcal{L}}_{\lambda}\widetilde{\Phi} $, with $\tilde{\mathcal{Q}}_{\lambda}$ denote the quadratic form associated to $\tilde{\mathcal{L}}_{\lambda}$. 

From the above computations and definitions we conclude the following result:

\begin{prop}
We have $\mathcal{L}_{\lambda}\Phi\cong\mathcal{L}^{(1)}_{\lambda}(b^\pm_2, b^\pm_0 )\oplus\widetilde{\mathcal{L}}_{\lambda}\widetilde{\Phi}$, where the operators $\mathcal{L}^{(n)}_\lambda$ are associated to the quadratic forms $\mathcal{Q}^{(n)}_\lambda$, the operator $\widetilde{\mathcal{L}}_{\lambda}$  is associated to the quadratic form $\widetilde{\mathcal{Q}}_{\lambda}$ and $\widetilde{\Phi}=[\widetilde{\phi}_{+}, \widetilde{\phi}_{-}]=\left[\sum_{n\neq0,2}b^{+}_{n}e^{in\theta}, \sum_{n\neq0,2}b^{-}_{n}e^{in\theta}\right]$.
\end{prop}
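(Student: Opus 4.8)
The plan is to read the result off directly from the term-by-term Fourier computation carried out above, whose net content is the decoupled representation \eqref{2ndvarEreform}. The only thing requiring care is to check that the quadratic forms $\mathcal{Q}^{(n)}_\lambda$ act on mutually orthogonal, complementary subspaces of the Hilbert space, so that the \emph{form} decomposition \eqref{2ndvarEreform} can be upgraded to an \emph{operator} decomposition.

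First I would record the orthogonal splitting of the space. Writing $\phi_\pm = \sum_{n\in\mathbb Z} b^\pm_n(r) e^{in\theta}$, the family $\{e^{in\theta}\}_{n\in\mathbb Z}$ is orthogonal in $L^2(0,2\pi)$, so $H^1_0(\mathbb D_1;\mathbb C^2)$ decomposes as the Hilbert-space direct sum of its mode subspaces. For each $n\ge 0$ let $V_n$ denote the subspace of pairs $\Phi$ whose only nonzero Fourier coefficients are $b^\pm_{1+n}$ and $b^\pm_{1-n}$. The elementary but essential point is that the index map $n\mapsto\{1+n,\,1-n\}$, $n\ge 0$, is a bijection of $\mathbb N\cup\{0\}$ onto a partition of $\mathbb Z$: indeed $1+n$ runs through $1,2,3,\dots$ while $1-n$ runs through $1,0,-1,\dots$, the two coinciding only when $n=0$. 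Hence every Fourier mode of $\Phi$ lies in exactly one block $V_n$, and $H^1_0(\mathbb D_1;\mathbb C^2)=\bigoplus_{n\ge 0} V_n$ orthogonally.

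Next I would invoke the computations above. Evaluating each of the four integral terms in \eqref{2ndvarE} mode by mode, the orthogonality relation $\int_0^{2\pi} e^{ik\theta}\,d\theta = 2\pi\,\delta_{k,0}$ forces every surviving contribution to pair $b^\pm_{1+n}$ with $b^\pm_{1-n}$ for a single $n$; no cross term coupling a mode in $V_n$ to a mode in $V_m$ with $m\ne n$ survives. This is precisely what the explicit evaluations of $\int|\nabla\Phi|^2$, $\int\langle\psi_\pm,\phi_\pm\rangle^2$, $\int\langle\psi_+,\phi_+\rangle\langle\psi_-,\phi_-\rangle$ and $\int|\phi_\pm|^2$ establish, and it yields \eqref{2ndvarEreform}. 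Since the symmetric form $E''_\lambda(\Psi)[\cdot]$ restricts to $\mathcal Q^{(n)}_\lambda$ on $V_n$ and carries no cross terms, each $V_n$ is invariant under the associated operator $\mathcal L_\lambda$, and therefore $\mathcal L_\lambda\cong\bigoplus_{n\ge0}\mathcal L^{(n)}_\lambda$. Finally I would separate out the single block $n=1$, on which $\mathcal L_\lambda$ acts as $\mathcal L^{(1)}_\lambda(b^\pm_2,b^\pm_0)$, from the complementary sum $\bigoplus_{n\ne1}\mathcal L^{(n)}_\lambda=\widetilde{\mathcal L}_\lambda$ acting on $\widetilde\Phi$, which gives the claimed splitting.

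The argument is essentially bookkeeping, so there is no deep obstacle; the one point to handle with care is the passage from the orthogonal splitting of the \emph{quadratic form} to the direct-sum splitting of the \emph{operator}. This requires that the absence of cross terms between the mutually orthogonal subspaces $V_n$ be equivalent to invariance of each $V_n$ under $\mathcal L_\lambda$, which rests on the form being symmetric and semibounded, hence represented by a self-adjoint operator via the form-representation theorem; this holds here because $\Psi_{rad}$ is a smooth solution with bounded potential coefficients (Lemma~\ref{uniformbound}). One should also confirm that the regrouping into the $n=1$ block and its complement preserves orthogonality, which is immediate since $V_1$ is itself one of the summands in the partition.
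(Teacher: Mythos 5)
Your proposal is correct and follows essentially the same route as the paper, which offers no separate proof for this Proposition but simply asserts that it follows ``from the above computations and definitions,'' i.e., precisely the mode-by-mode evaluation of \eqref{2ndvarE} leading to \eqref{2ndvarEreform} that you invoke. Your two supplementary observations --- that $n\mapsto\{1+n,\,1-n\}$, $n\ge 0$, induces a partition of $\mathbb{Z}$ into mutually orthogonal blocks, and that the absence of cross terms in the symmetric, semibounded form upgrades the form decomposition to an operator direct sum --- merely make explicit the bookkeeping the paper leaves implicit.
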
 

We now show that the ground state eigenvalue of $\mathcal{L}_{\lambda}$ is determined by only one of the operators, $\mathcal{L}_{\lambda}^{(1)}$.  We proceed by a series of reductions.
Define 
$$a^{\pm}_{1}:=\dis i\left(\sum_{ {n\neq0,2}\atop{n\ge0} }|b^{\pm}_{n}|^2\right)^{1/2}.
$$
Then, $a_{1}(r)$ is purely imaginary and we have 
\begin{equation*}
|a^{\pm}_{1}|^{2}=\dis \left|i\left(\sum_{ {n\neq0,2}\atop{n\ge0} }|b^{\pm}_{n}|^{2}\right)^{1\over2}\right|^{2}=\sum_{ {n\neq0,2}\atop{n\ge0} }|b^{\pm}_{n}|^{2} .
\end{equation*}
By the facts
\begin{equation*}
(a^{\pm}_{1})'=\dis i\left(\sum_{ {n\neq0,2}\atop{n\ge0} }|b^{\pm}_{n}|^2\right)^{-{1\over 2}}\sum_{ {n\neq0,2}\atop{n\ge0} }\langle b^{\pm}_{n}, (b^{\pm}_{n})' \rangle ,
\end{equation*}
and
\begin{equation*}
\dis |(a^{\pm}_{1})'|^{2}=\dis \frac{|\sum_{ {n\neq0,2}\atop{n\ge0} }\langle b^{\pm}_{n}, (b^{\pm}_{n})' \rangle|^{2}}{\sum_{ {n\neq0,2}\atop{n\ge0} }|b^{\pm}_{n}|^2}\le 
\frac{\sum_{ {n\neq0,2}\atop{n\ge0} }|b^{\pm}_{n}|^{2}\sum_{ {n\neq0,2}\atop{n\ge0} }|(b^{\pm}_{n})'|^2}{\sum_{ {n\neq0,2}\atop{n\ge0} }|b^{\pm}_{n}|^2}
=\sum_{ {n\neq0,2}\atop{n\ge0} }|(b^{\pm}_{n})'|^2 ,
\end{equation*}
we obtain that
\begin{equation}\label{a1ineq}
|(a^{\pm}_{1})'|^{2}\le\dis \sum_{ {n\neq0,2}\atop{n\ge0} }|(b^{\pm}_{n})'|^{2} .
\end{equation}

On the other hand, by the positive definite condition (H), we conclude that
\begin{multline*}
A_{+}f^2_{+}|b^{+}_{1+n}+\overline{b^{+}_{1-n}}|^{2}+A_{-}f^2_{-}|b^{-}_{1+n}+\overline{b^{-}_{1-n}}|^{2}\\
+2Bf_{+}f_{-}[\langle b^{+}_{1+n}+\overline{b^{+}_{1-n}},b^{-}_{1+n}\rangle+\langle b^{+}_{1-n}+\overline{b^{+}_{1+n}},b^{-}_{1-n}\rangle]\\
=A_{+}f^2_{+}|b^{+}_{1+n}+\overline{b^{+}_{1-n}}|^{2}+A_{-}f^2_{-}|b^{-}_{1+n}+\overline{b^{-}_{1-n}}|^{2}\\
+2Bf_{+}f_{-}\text{Re}[(b^{+}_{1+n}+\overline{b^{+}_{1-n}})(\overline{b^{-}_{1+n}}+b^{-}_{1-n})]
      \ge 0,
\end{multline*}
for each $n>0$ and $n\neq1$.
Similarly, we also have 
$$
A_{+}f^2_{+}(\text{Re}b^+_{1})^{2}+A_{-}f^2_{-}(\text{Re}b^-_{1})^{2}+2Bf_{+}f_{-}(\text{Re}b^+_{1})(\text{Re}b^-_{1})
\ge 0.
$$

Therefore, for $n\neq 1$, we have that
{\allowdisplaybreaks
\begin{align}\label{Qnineq}
&\mathcal{Q}^{(n)}_{\lambda}(b^{\pm}_{1+n}, b^{\pm}_{1-n})\nnn\\
                 &\qquad\ge \dis 2\pi\int^1_{0}\left\{|(b^{+}_{1+n})'|^{2}+|(b^{+}_{1-n})'|^{2}+{(1+n)^{2}\over r^{2}}|b^{+}_{1+n}|^{2}+{(1-n)^{2}\over r^{2}}|b^{+}_{1-n}|^{2}\right\}rdr\nnn\\
                  &\quad\qquad \dis + 2\pi\int^1_{0}\left\{|(b^{-}_{1+n})'|^{2}+|(b^{-}_{1-n})'|^{2}+{(1+n)^{2}\over r^{2}}|b^{-}_{1+n}|^{2}+{(1-n)^{2}\over r^{2}}|b^{-}_{1-n}|^{2}\right\}rdr\nnn\\
                   &\quad\qquad  \dis +2\lambda\pi\int^1_{0}[A_{+}(f^2_{+}-t^2_{+})+B(f^2_{-}-t^2_{-})](|b^{+}_{1+n}|^{2}+|b^{+}_{1-n}|^{2})rdr\nnn\\
                   &\quad\qquad \dis +2\lambda\pi\int^1_{0}[A_{-}(f^2_{-}-t^2_{-})+B(f^2_{+}-t^2_{+})](|b^{-}_{1+n}|^{2}+|b^{-}_{1-n}|^{2})rdr .
\end{align}}
By \eqref{a1ineq} and \eqref{Qnineq}, it follows that
\begin{align}\label{Qa1ineq}
\dis \widetilde{\mathcal{Q}}_{\lambda}(\tilde{\Phi})&=\sum_{{n\neq 1}\atop {n\ge0}}\mathcal{Q}^{(n)}_{\lambda}(b^{\pm}_{1+n}, b^{\pm}_{1-n})\nnn\\
                                 &\ge \dis 2\pi\int^1_{0} \left\{|(a^+_1)'|^{2}+|(a^-_1)'|^{2}+{1\over r^2}(|a^+_1|^{2}+|a^-_1|^{2})\right\}rdr\nnn\\
                                 &\quad \dis +2\lambda\pi \int^1_{0}[A_{+}(f^2_{+}-t^2_{+})+B(f^2_{-}-t^2_{-})]|a^+_1|^{2}rdr\nnn\\
                                 &\quad \dis +2\lambda\pi \int^1_{0}[A_{-}(f^2_{-}-t^2_{-})+B(f^2_{+}-t^2_{+})]|a^-_1|^{2}rdr\nnn\\
                                 &=:Q^{(0)}_{\lambda}(a^{\pm}_{1}) .
\end{align}

Meanwhile, we have that
{\allowdisplaybreaks
\begin{align*}
&\mathcal{Q}^{(1)}_{\lambda}(b^{\pm}_{2}, b^{\pm}_{0})\\
&\qquad\dis=2\pi\int^{1}_{0}\left[|(b^{+}_{2})'|^{2}+|(b^{+}_{0})'|^{2}+{4\over r^2}|b^{+}_{2}|^{2}\right]rdr\\
&\quad\qquad \dis +2\pi\int^{1}_{0}\left[|(b^{-}_{2})'|^{2}+|(b^{-}_{0})'|^{2}+{4\over r^2}|b^{-}_{2}|^{2}\right]rdr\\
               &\quad\qquad \dis +2\lambda\pi\int^1_{0}\left[A_{+}f^2_{+}|b^{+}_{2}+\overline{b^{+}_{0}}|^{2}+A_{-}f^2_{-}|b^{-}_{2}+\overline{b^{-}_{0}}|^{2}\right]rdr\\
               &\quad\qquad  \dis +4\lambda\pi B\int^1_{0}f_{+}f_{-}[\langle b^{+}_{2}+\overline{b^{+}_{0}}, b^-_{2}\rangle+\langle b^+_{0}+\overline{b^{+}_{2}}, b^{-}_{0}\rangle]rdr\\
              &\quad\qquad \dis +2\lambda\pi\int^1_{0}[A_{+}(f^2_{+}-t^2_{+})+B(f^2_{-}-t^2_{-})](|b^{+}_{2}|^{2}+|b^+_{0}|^{2})rdr\\
               &\quad\qquad \dis +2\lambda\pi\int^1_{0}[A_{-}(f^2_{-}-t^2_{-})+B(f^2_{+}-t^2_{+})](|b^{-}_{2}|^{2}+|b^-_{0}|^{2})rdr.
\end{align*}}
The self-adjoint operator associated to $\mathcal{Q}^{(1)}_{\lambda}(b^{\pm}_{2}, b^{\pm}_{0})$ is 
\begin{equation}\label{L1complex}
\mathcal{L}^{(1)}_{\lambda}\left[\begin{array}{l}b^\pm_0 \\b^\pm_2 \end{array}\right]
=\left[\begin{array}{c}-(b^\pm_0 )''-{1\over r}(b^\pm_0 )'+\lambda[A_{\pm}(f^2_{\pm}-t^2_{\pm})+B(f^2_{\mp}-t^2_{\mp})]b^\pm_{0}\qquad\quad\\
\qquad\qquad\qquad+\lambda A_{\pm}f^2_{\pm}(\overline{b^\pm_2}+b^\pm_0 )+\lambda Bf_+ f_-(\overline{b^\mp_2}+b^\mp_0 )\\
-(b^\pm_2 )''-{1\over r}(b^\pm_2 )'+{4\over r^2}b^\pm_{2}+\lambda[A_{\pm}(f^2_{\pm}-t^2_{\pm})+B(f^2_{\mp}-t^2_{\mp})]b^\pm_{2}\\
\qquad\qquad\qquad+\lambda A_{\pm}f^2_{\pm}(b^\pm_2 +\overline{b^\pm_0 })+\lambda Bf_+ f_-(b^\mp_2 +\overline{b^\mp_0 })
\end{array}\right] .
\end{equation}
We perform a further reduction of the operator $\mathcal{L}^{(1)}_{\lambda}$, which will be useful in proving instability for $B>0$: define a quadratic form $Q^{(1)}_\lambda$ on real-valued radial functions $(a^\pm_2, a^\pm_0 )$ by 
\begin{align}\label{Q(1)}
&Q^{(1)}_{\lambda}(a^{\pm}_{2},a^{\pm}_{0})=\dis 2\pi\int^1_{0}\sum_{i=\pm}\left[|(a^{i}_{2})'|^{2}+|(a^{i}_{0})'|^{2}+{4\over r^{2}}|a^{i}_{2}|^{2}\right]rdr\\
\nnn
                                                      &\qquad+\dis 2\lambda\pi\int^1_{0}[A_{+}(f^2_{+}-t^2_{+})+B(f^2_{-}-t^2_{-})](|a^+_{2}|^{2}+|a^+_{0}|^{2})rdr\\
                                                      \nnn
                                                       &\qquad+\dis 2\lambda\pi\int^1_{0}[A_{-}(f^2_{-}-t^2_{-})+B(f^2_{+}-t^2_{+})](|a^-_{2}|^{2}+|a^-_{0}|^{2})rdr\\   \nnn
                                                      &\qquad+\dis  2\lambda\pi\int^1_{0}[A_{+}f^2_{+}(a^+_{0}-a^+_{2})^{2}+A_{-}f^2_{-}(a^-_{0}-a^-_{2})^{2}
                                                      +2Bf_{+}f_{-}(a^+_{0}-a^+_{2})(a^-_{0}-a^-_{2})]rdr .
\end{align}
The associated self-adjoint operator to $Q^{(1)}_\lambda$ is 
\begin{equation}\label{L1real}
\mathcal{M}^{(1)}_{\lambda}
\left[\begin{array}{c}a^\pm_0 \\a^\pm_2 \end{array}\right]=
\left[\begin{array}{l}-(a^\pm_{0})''-{1\over r}(a^\pm_{0})'+\lambda [A_{\pm}(f^2_{\pm}-t^2_{\pm})+B(f^2_{\mp}-t^2_{\mp})]a^{\pm}_{0}\\
\qquad\qquad\qquad+\lambda A_{\pm}f^2_{\pm}(a^\pm_{0}-a^\pm_{2})+\lambda Bf_{+}f_{-}(a^\mp_{0}-a^\mp_{2})\\
 -(a^\pm_{2})''-{1\over r}(a^\pm_{2})'+{4\over r^2 }a^\pm_{2}+\lambda [A_{\pm}(f^2_{\pm}-t^2_{\pm})+B(f^2_{\mp}-t^2_{\mp})]a^{\pm}_{2}\\
\qquad\qquad\qquad+ \lambda A_{\pm}f^2_{\pm}(a^\pm_{2}-a^\pm_{0})+\lambda Bf_{+}f_{-}(a^\mp_{2}-a^\mp_{0})
 \end{array}\right].
\end{equation}

Denote 
$$
\tilde{\mu}_\lambda =\dis\min_{\|\Phi\|^2_{L^{2}}=1}E_{\lambda}''(\Psi)[\Phi] ,\qquad \mu_{\lambda}=\dis\min_{\|(a^\pm_{0}, a^\pm_{1}, a^\pm_{2})\|^2_{L^{2}}=1}Q_{\lambda}(a^\pm_{0}, a^\pm_{1}, a^\pm_{2}) ,
$$ 
with $Q_{\lambda}(a^\pm_{0}, a^\pm_{1}, a^\pm_{2})=Q^{(0)}_{\lambda}(a^\pm_1 )+Q^{(1)}_{\lambda}(a^{\pm}_{2},a^{\pm}_{0})$, and associated self-adjoint operators $\mathcal{L}_{\lambda}$, $\mathcal{M}^{(n)}_\lambda (n=0, 1)$, respectively. We then write
$$
\|(a^\pm_{0}, a^\pm_{1}, a^\pm_{2})\|^2_{L^{2}}:=\dis\sum_{\sigma=\pm}\left[\|a^\sigma_0\|^2_{L^2 }+\|a^\sigma_1\|^2_{L^2 }+\|a^\sigma_2\|^2_{L^2 }\right].
$$
The following Proposition summarizes the outcome of the above reductions.  In particular, the stability or instability of the equivariant solution will depend on only two of the above operators, involving only three Fourier modes (in all, six radial functions $a_k^\pm$, $k=0,1,2$):

\begin{prop} \label{muequal}
We have $\tilde{\mu}_\lambda = \mu_{\lambda}, \forall\ \lambda>0$.
\end{prop}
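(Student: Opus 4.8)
The plan is to establish the two inequalities $\tilde{\mu}_\lambda \ge \mu_{\lambda}$ and $\tilde{\mu}_\lambda \le \mu_{\lambda}$ separately, using the Fourier decomposition \eqref{2ndvarEreform} together with the two reductions already in hand: the bound \eqref{Qa1ineq}, which collapses every block with $n\neq 1$ into the single form $Q^{(0)}_{\lambda}(a^{\pm}_1)$, and the passage from the complex form $\mathcal{Q}^{(1)}_{\lambda}$ to the real form $Q^{(1)}_{\lambda}$ of \eqref{Q(1)}.

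For the lower bound, I would take any $\Phi$ with $\|\Phi\|^2_{L^2}=1$, expand it in Fourier modes, and note that \eqref{Qa1ineq} already yields $E''_{\lambda}(\Psi)[\Phi]=\mathcal{Q}^{(1)}_{\lambda}(b^{\pm}_2,b^{\pm}_0)+\widetilde{\mathcal{Q}}_{\lambda}(\widetilde{\Phi})\ge \mathcal{Q}^{(1)}_{\lambda}(b^{\pm}_2,b^{\pm}_0)+Q^{(0)}_{\lambda}(a^{\pm}_1)$. The new ingredient is the decoupling of $\mathcal{Q}^{(1)}_{\lambda}$. Writing $b^{\pm}_0=u^{\pm}_0+iv^{\pm}_0$ and $b^{\pm}_2=u^{\pm}_2+iv^{\pm}_2$ and setting $w^{\pm}:=b^{\pm}_2+\overline{b^{\pm}_0}$, so that $\text{Re}\,w^{\pm}=u^{\pm}_2+u^{\pm}_0$ and $\text{Im}\,w^{\pm}=v^{\pm}_2-v^{\pm}_0$, one checks that all coupling terms regroup through the identity $\langle b^{+}_2+\overline{b^{+}_0},b^-_2\rangle+\langle b^+_0+\overline{b^{+}_2},b^{-}_0\rangle=\langle w^+,w^-\rangle$, and that the whole form separates into its real and imaginary sectors as $\mathcal{Q}^{(1)}_{\lambda}(b^{\pm}_2,b^{\pm}_0)=Q^{(1)}_{\lambda}(-u^{\pm}_2,u^{\pm}_0)+Q^{(1)}_{\lambda}(v^{\pm}_2,v^{\pm}_0)$, where the sign flip $u^{\pm}_2\mapsto -u^{\pm}_2$ turns the real-sector coupling $(u^{\pm}_2+u^{\pm}_0)^2$ into the $(a^{\pm}_0-a^{\pm}_2)^2$ of \eqref{Q(1)}, and hypothesis (H) guarantees each sector's coupling is positive semidefinite. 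Finally, applying $\mu_{\lambda}\le Q_{\lambda}/\|\cdot\|^2$ to the three disjointly supported test triples $(a^{\pm}_1,0,0)$, $(0,-u^{\pm}_2,u^{\pm}_0)$ and $(0,v^{\pm}_2,v^{\pm}_0)$ bounds each of the three pieces below by $\mu_{\lambda}$ times its own $L^2$ mass; since those masses sum to $\|\Phi\|^2_{L^2}=1$, we get $E''_{\lambda}(\Psi)[\Phi]\ge\mu_{\lambda}$, hence $\tilde{\mu}_\lambda\ge\mu_{\lambda}$.

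For the upper bound I would reverse the construction. Let $(a^{\pm}_0,a^{\pm}_1,a^{\pm}_2)$ realize $\mu_{\lambda}$ with unit $L^2$ norm and $a^{\pm}_1$ purely imaginary, and define $\phi_{\pm}=a^{\pm}_1 e^{i\theta}+a^{\pm}_0-a^{\pm}_2 e^{2i\theta}$, i.e. $b^{\pm}_1=a^{\pm}_1$, $b^{\pm}_0=a^{\pm}_0$, $b^{\pm}_2=-a^{\pm}_2$, and all other modes zero. Then $\|\Phi\|^2_{L^2}=1$; all blocks $\mathcal{Q}^{(n)}_{\lambda}$ with $n\ge 2$ vanish; because $a^{\pm}_1$ is imaginary the $\text{Re}\,b^{\pm}_1$ coupling term in $\mathcal{Q}^{(0)}_{\lambda}(b^{\pm}_1)$ drops, giving $\mathcal{Q}^{(0)}_{\lambda}(b^{\pm}_1)=Q^{(0)}_{\lambda}(a^{\pm}_1)$; and the sign choice $b^{\pm}_2=-a^{\pm}_2$ gives $\mathcal{Q}^{(1)}_{\lambda}(b^{\pm}_2,b^{\pm}_0)=Q^{(1)}_{\lambda}(a^{\pm}_2,a^{\pm}_0)$ by the real-sector identity above. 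Hence $E''_{\lambda}(\Psi)[\Phi]=Q^{(0)}_{\lambda}(a^{\pm}_1)+Q^{(1)}_{\lambda}(a^{\pm}_2,a^{\pm}_0)=\mu_{\lambda}$, so $\tilde{\mu}_\lambda\le\mu_{\lambda}$. Combining the two inequalities proves the claim.

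The main obstacle is the exact algebraic decoupling of the complex form $\mathcal{Q}^{(1)}_{\lambda}$ into two copies of the real form $Q^{(1)}_{\lambda}$, and getting the signs right, above all the flip $b^{\pm}_2\mapsto -a^{\pm}_2$ that is forced by the appearance of $(a^{\pm}_0-a^{\pm}_2)^2$ rather than $(a^{\pm}_0+a^{\pm}_2)^2$ in \eqref{Q(1)}. One must verify that the cross $B$-terms collapse to $\langle w^+,w^-\rangle=\text{Re}(\overline{w^+}w^-)$ so that they split cleanly into a real-part product and an imaginary-part product, allowing the positive semidefiniteness from (H) to apply sector by sector. The remaining bookkeeping point is tracking the $L^2$ masses through all three reductions so that they add up to exactly $\|\Phi\|^2_{L^2}$, which is what makes the mass-weighted lower bound close up to give precisely $\mu_{\lambda}$.
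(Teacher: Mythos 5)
Your proposal is correct and takes essentially the same route as the paper's proof of Proposition~\ref{muequal}: both directions are handled identically, with the lower bound resting on the Fourier decomposition \eqref{2ndvarEreform} together with \eqref{Qa1ineq}, and the upper bound via the same test function $\phi_\pm = a_0^\pm + i a_1^\pm e^{i\theta} - a_2^\pm e^{2i\theta}$ (your $b_2^\pm=-a_2^\pm$ sign flip matching the paper's choice). The one difference is that you make explicit the exact real/imaginary splitting $\mathcal{Q}^{(1)}_\lambda(b_2^\pm,b_0^\pm)=Q^{(1)}_\lambda(-u_2^\pm,u_0^\pm)+Q^{(1)}_\lambda(v_2^\pm,v_0^\pm)$ and the mass bookkeeping through the three sectors, which the paper leaves implicit in its displayed inequality $\mathcal{Q}^{(0)}_\lambda+\mathcal{Q}^{(1)}_\lambda+\sum_{n\ge2}\mathcal{Q}^{(n)}_\lambda \ge Q^{(0)}_\lambda(a_1^\pm)+Q^{(1)}_\lambda(a_2^\pm,a_0^\pm)$; your version checks out and is in fact a welcome clarification of that step.
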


\begin{proof}
Assume that $\Phi$ attains the minimum $\tilde{\mu}_\lambda$ under the constraint $\|\Phi\|^2_{L^2 }=1$. Then, by the Fourier decomposition of $\Phi$ and \eqref{2ndvarEreform}, 
\begin{align*}
\tilde{\mu}_\lambda=E_{\lambda}''(\Psi)[\Phi] 
&=\dis \mathcal{Q}^{(0)}_{\lambda}(b^{\pm}_{1})+\mathcal{Q}^{(1)}_{\lambda}(b^{\pm}_{2}, b^{\pm}_{0})+\sum^{\infty}_{n=2}\mathcal{Q}^{(n)}_{\lambda}(b^{\pm}_{1+n}, b^{\pm}_{1-n})\\
&\ge Q^{(0)}_{\lambda}(a^\pm_1 )+Q^{(1)}_{\lambda}(a^\pm_2, a^\pm_0 ) ,
\end{align*}
where $\|(a^\pm_{0}, a^\pm_{1}, a^\pm_{2})\|^2_{L^{2}}=1$ by the choice of $a^\pm_{0}, a^\pm_{1}, a^\pm_{2}$ as above. Therefore, $\tilde{\mu}_\lambda \ge\mu_\lambda$.

Conversely, if $(a^\pm_{0}, a^\pm_{1}, a^\pm_{2})$ attain the minimum for $\mu_\lambda$, we have $\Phi=[\phi_+, \phi_- ]$ with $\phi_{\pm}=a^\pm_0 +ia^\pm_1 e^{i\theta}-a^\pm_2 e^{2i\theta}$ and $\|\Phi\|^2_{L^2 }=\|(a^\pm_{0}, a^\pm_{1}, a^\pm_{2})\|^2_{L^{2}}=1$. Hence, 
$$
\tilde{\mu}_\lambda \le E_{\lambda}''(\Psi)[\Phi] =Q^{(0)}_{\lambda}(a^\pm_1 )+Q^{(1)}_{\lambda}(a^\pm_2, a^\pm_0 )=\mu_\lambda ,
$$
i.e. $\tilde{\mu}_\lambda\le \mu_\lambda$.  Hence, $\tilde{\mu}_\lambda = \mu_{\lambda}$.
\end{proof}

\section{Stability}

Our main goal in this section is to prove part (i) of Theorem~\ref{main}, which concerns the case $0>B>-\sqrt{A_+A_-}$.
From Proposition~\ref{muequal} and the Fourier decomposition, it suffices to show the positivity of the two ground-state eigenvalues,
$$
\mu^{(0)}_{\lambda}=\min_{\|a^\pm_{1}\|^2_{L^{2}}=1}Q^{(0)}_{\lambda}(a^\pm_1 ), \qquad
\mu^{(1)}_{\lambda}=\min_{\|(a^\pm_{0}, a^\pm_{2})\|^2_{L^{2}}=1}Q^{(1)}_{\lambda}(a^\pm_2, a^\pm_0 ) .
$$
Indeed, Proposition~\ref{muequal} implies that $\mu_\lambda=\min\{\mu^{(0)}_{\lambda}, \mu^{(1)}_{\lambda} \}$, so we must show that both $\mu^{(0)}_{\lambda}$ and $\mu^{(1)}_{\lambda}$ are positive.

In fact, for any $B$ (regardless of sign), a negative first eigenvalue can only be due to the quadratic form $Q_\lambda^{(1)}$, and never the form
$Q_\lambda^{(0)}$:

\begin{prop}\label{mu0}
$\mu^{(0)}_{\lambda}>0$ for any $B$ with $|B|<\sqrt{A_+A_-}$. 
\end{prop}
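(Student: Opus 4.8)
The plan is to exploit the structural fact that $Q^{(0)}_\lambda$ contains no coupling term between the $+$ and $-$ components: unlike $Q^{(1)}_\lambda$, the cross term proportional to $Bf_+f_-$ is absent, so $Q^{(0)}_\lambda$ splits as a sum over the two components. Writing $a^\pm_1 = i\,g_\pm$ with $g_\pm\ge 0$ real (which is legitimate, since only $|a^\pm_1|^2$ and $|(a^\pm_1)'|^2$ enter the form), one has $Q^{(0)}_\lambda(a^\pm_1) = \mathcal J_+(g_+) + \mathcal J_-(g_-)$, where
\[
\mathcal J_\pm(g_\pm) = 2\pi\int_0^1\Big[|g'_\pm|^2 + \tfrac{1}{r^2}\,g_\pm^2 + \lambda\big(A_\pm(f^2_\pm - t^2_\pm) + B(f^2_\mp - t^2_\mp)\big)g_\pm^2\Big]\,r\,dr .
\]
Because the two summands are decoupled, the sign of $B$ plays no role beyond fixing the profiles $f_\pm$, and it suffices to show $\mathcal J_\pm(g_\pm)>0$ for every nonzero $g_\pm\in H^1_0$.

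The key observation is that $\mathcal J_\pm$ is exactly the quadratic form of the Schr\"odinger operator $L_\pm = -\partial_r^2 - \tfrac1r\partial_r + W_\pm$ with potential $W_\pm = \tfrac1{r^2} + \lambda\big(A_\pm(f^2_\pm - t^2_\pm) + B(f^2_\mp - t^2_\mp)\big)$, and that the profile $f_\pm(r;\lambda)$ is a \emph{positive} solution of $L_\pm f_\pm = 0$. Indeed, this is precisely the radial Euler--Lagrange equation satisfied by the equivariant solution of \eqref{dirichlet}, and the positivity $f_\pm>0$ on $(0,1)$ follows as in \eqref{fpstive} from the strong maximum principle. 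Thus $f_\pm$ is a ground state at energy zero, and I would carry out the standard ground-state (Hardy) substitution $g_\pm = f_\pm w_\pm$. Using $L_\pm f_\pm=0$ in the form $(r f'_\pm)' = r W_\pm f_\pm$, an integration by parts collapses all the potential and first-order terms, giving
\[
\mathcal J_\pm(g_\pm) = 2\pi\int_0^1 f^2_\pm\,|w'_\pm|^2\,r\,dr + 2\pi\big[\,r f_\pm f'_\pm\, w_\pm^2\,\big]_0^1 .
\]

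It then remains to kill the boundary term and to upgrade nonnegativity to strict positivity. At $r=1$ the Dirichlet condition $g_\pm(1)=0$ gives $w_\pm(1)=0$. At $r=0$ the centrifugal term $1/r^2$ forces admissible $g_\pm$ to behave like $r$ (the admissible indicial root of $-u''-u'/r+u/r^2$ is $+1$), matching $f_\pm\sim r$, so that $w_\pm$ stays bounded and $r f_\pm f'_\pm w_\pm^2 = O(r^2)\to 0$. Hence $\mathcal J_\pm(g_\pm) = 2\pi\int_0^1 f^2_\pm |w'_\pm|^2\,r\,dr \ge 0$, with equality only if $w_\pm$ is constant; since $w_\pm(1)=0$, this forces $g_\pm\equiv 0$. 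Because $L_\pm$ has compact resolvent on $(0,1)$ with weight $r\,dr$, its spectrum is discrete and the infimum $\mu^{(0)}_\lambda$ is attained by an eigenfunction; the strict inequality just established rules out a zero eigenvalue, so $\mu^{(0)}_\lambda>0$, independently of the sign of $B$.

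The one step requiring genuine care is the vanishing of the boundary term at the origin. I would justify it either by first proving the identity for $g_\pm$ compactly supported in $(0,1)$ and then passing to the limit by density, cutting off near $r=0$ and controlling the error with the finiteness of $\int_0^1 (g_\pm^2/r^2)\,r\,dr$; or by working directly with the minimizing eigenfunction, whose smoothness and $O(r)$ behavior at the origin follow from elliptic regularity and the Bessel structure of $L_\pm$ near $r=0$. Everything else---the decoupling, the substitution, and the strictness---is routine once the positivity of $f_\pm$ is in hand.
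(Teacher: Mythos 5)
Your proof is correct, but it takes a genuinely different route from the paper's. The paper establishes $\mu^{(0)}_\lambda\ge 0$ variationally, via Lemma~\ref{4}: $\Psi_{\mathrm{rad}}$ is the \emph{unique} minimizer of $E_\lambda$ among equivariant maps, so $E''_\lambda\ge 0$ on such perturbations, and since the purely imaginary modes $\phi_\pm=ia^\pm_1e^{i\theta}$ satisfy $\langle\psi_\pm,\phi_\pm\rangle=0$, the form $Q^{(0)}_\lambda$ coincides with $E''_\lambda$ there. Strictness is then obtained by a Wronskian-type argument: a putative null mode solves the decoupled equations \eqref{a1eqns}, which are the \emph{same} linear ODEs satisfied by $f_\pm$ in \eqref{fpmeqns}; pairing the two equations (multiplying by $rf_\pm$ and $ra^\pm_1$ respectively, integrating, and subtracting) yields $(a^\pm_1)'(1)t_\pm=f'_\pm(1)a^\pm_1(1)=0$, whence $a^\pm_1$ has vanishing Cauchy data at $r=1$ and is identically zero by ODE uniqueness. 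You instead exploit the same underlying structure---that $f_\pm$ is a positive zero mode of the decoupled Schr\"odinger operator---through the ground-state substitution $g_\pm=f_\pm w_\pm$, obtaining the sum-of-squares identity $\mathcal J_\pm(g_\pm)=2\pi\int_0^1 f_\pm^2|w'_\pm|^2\,r\,dr$, which delivers nonnegativity and strictness in a single stroke and bypasses Lemma~\ref{4} entirely. What your approach buys is self-containedness and an explicit quantitative representation of the form; what it costs is exactly the analysis you flag, namely the boundary term at the origin (handled either by density with the weight $\int_0^1 g_\pm^2 r^{-1}\,dr<\infty$, or by working with the attained eigenfunction, whose $O(r)$ behavior follows from the Bessel structure), whereas the paper's Wronskian identity is evaluated only on the smooth eigenfunction pair and needs no form-domain density argument. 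Two small points to make explicit: the positivity $f_\pm>0$ on $(0,1)$ is stated in the paper (Proposition~\ref{lemunique}, \eqref{fpstive}) only for the entire solution $f^\infty_\pm$, so for the disk solution of Proposition~\ref{existR} you should supply the one-line argument (an interior zero of the nonnegative profile is a degenerate minimum, contradicting Cauchy uniqueness for the ODE); and the attainment of $\mu^{(0)}_\lambda$, which your strictness step requires, deserves a word (compactness of the embedding on the bounded disk, the potential being bounded by Lemma~\ref{uniformbound}). Neither is a gap, merely a citation mismatch to repair.
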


First, we require the following adaptation of a result by Mironescu \cite{m95}.

\begin{lemma}\label{4}
$\Psi_{\text{rad}}$ is the only minimizer of $E_{\lambda}$ in the class
\begin{equation*}
\mathcal{E}=\{ V=(g_{+}(r)e^{i\tta}, g_{-}(r)e^{i\tta})\big| V\in H^{1}(B_{1}; \mathbb{C}), g_{\pm}(1)=f_{\pm}(1)=t_{\pm}\}.
\end{equation*}
\end{lemma}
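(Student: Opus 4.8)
The plan is to reduce the minimization over the equivariant class $\mathcal{E}$ to a one-dimensional variational problem for the radial profiles, and then to prove uniqueness by a convexity argument in the squared variables $v_\pm=g_\pm^2$, following Mironescu's idea \cite{m95} for the scalar equation.

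First I would show that any minimizer may be taken with real, nonnegative profiles. Writing $V=(g_+(r)e^{i\theta},g_-(r)e^{i\theta})\in\mathcal{E}$ and $g_\pm=\rho_\pm e^{i\chi_\pm}$ with $\rho_\pm=|g_\pm|$, one has $|g_\pm'|^2=(\rho_\pm')^2+\rho_\pm^2(\chi_\pm')^2$, while both the centrifugal term $|g_\pm|^2/r^2$ and the potential $F$ depend on $g_\pm$ only through $\rho_\pm$. Hence $E_\lambda((\rho_+e^{i\theta},\rho_-e^{i\theta}))\le E_\lambda(V)$, with equality only if $\chi_\pm'\equiv 0$; since the boundary trace $g_\pm(1)=t_\pm>0$ forces a constant phase to vanish, every minimizer has $g_\pm$ real and $\ge 0$. (The passage to the modulus is justified in $H^1$ by the diamagnetic inequality $|\nabla|g||\le|\nabla g|$.) On real nonnegative profiles the energy becomes
\[
I(g_+,g_-)=\pi\int_0^1\Big[(g_+')^2+(g_-')^2+\frac{g_+^2+g_-^2}{r^2}\Big]r\,dr+\frac{\lambda\pi}{2}\int_0^1 F(g_+,g_-)\,r\,dr,
\]
to be minimized over $g_\pm\in H^1$, $g_\pm\ge 0$, $g_\pm(1)=t_\pm$, $g_\pm(0)=0$ (the last condition forced by finiteness of the centrifugal term).

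Then I would substitute $v_\pm=g_\pm^2$. Using $(g_\pm')^2=(v_\pm')^2/(4v_\pm)$, the functional becomes
\[
J(v_+,v_-)=\pi\int_0^1\Big[\frac{(v_+')^2}{4v_+}+\frac{(v_-')^2}{4v_-}+\frac{v_++v_-}{r^2}\Big]r\,dr+\frac{\lambda\pi}{2}\int_0^1 F(\sqrt{v_+},\sqrt{v_-})\,r\,dr,
\]
over the convex admissible set $\{v_\pm\ge 0,\ v_\pm(1)=t_\pm^2,\ v_\pm(0)=0\}$. The key point is that $J$ is strictly convex: the perspective function $(v,v')\mapsto (v')^2/(4v)$ is jointly convex for $v>0$, the centrifugal term is linear in $v_\pm$, and the potential, viewed as a function of $(v_+,v_-)$, equals $A_+(v_+-t_+^2)^2+A_-(v_--t_-^2)^2+2B(v_+-t_+^2)(v_--t_-^2)$, whose Hessian $\bigl(\begin{smallmatrix}2A_+&2B\\ 2B&2A_-\end{smallmatrix}\bigr)$ is positive definite by (H); hence $v\mapsto F(\sqrt{v_+},\sqrt{v_-})$ is strictly convex. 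A strictly convex functional on a convex set has at most one minimizer. Since $\Psi_{\text{rad}}$ solves \eqref{radeq}, the Euler--Lagrange equation of $I$ (equivalently of $J$, the two formulations agreeing because $f_\pm>0$ on $(0,1]$), it is a critical point of the convex functional $J$ and therefore its global minimizer; by uniqueness it is the only one, which translated back shows $\Psi_{\text{rad}}$ is the unique minimizer of $E_\lambda$ in $\mathcal{E}$. (Alternatively, once uniqueness of the minimizer is known, it may be identified with $\Psi_{\text{rad}}$ directly through Proposition~\ref{existR}.)

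The main obstacle is the rigorous handling of the change of variables and the convexity argument at the origin, where $v_\pm(0)=0$ makes the perspective density $(v_\pm')^2/(4v_\pm)$ degenerate and the map $g\mapsto g^2$ non-smooth. One must check that $v_\pm=g_\pm^2$ lies in the correct weighted space with $(v_\pm')^2/v_\pm\in L^1(r\,dr)$, that the identity $\int(g_\pm')^2\,r\,dr=\int (v_\pm')^2/(4v_\pm)\,r\,dr$ holds (taking $g_\pm'=0$ a.e.\ on $\{g_\pm=0\}$), and that the convex-combination inequality defining strict convexity survives integration despite these degeneracies; this is precisely where the strict convexity of the potential term, rather than the merely convex gradient term, does the work. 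Existence of a minimizer to begin the argument follows from the direct method, since $I$ is coercive and weakly lower semicontinuous and $\mathcal{E}$ is weakly closed.
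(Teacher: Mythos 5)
Your proof is correct, and its key uniqueness step takes a genuinely different (more self-contained) route than the paper's. The opening reduction is identical: the paper also passes to moduli via $|\nabla|g_\pm||\le|\nabla g_\pm|$ and concludes that a minimizer in $\mathcal{E}$ may be taken with real nonnegative profiles. But from there the paper is short: it observes that the nonnegative minimizing profile solves the radial system \eqref{radeq} and then simply invokes the uniqueness of nonnegative solutions from Proposition~\ref{existR} (which is itself proved in \cite{ag13} by the Brezis--Oswald method), concluding $g_\pm\equiv f_\pm$ by ODE uniqueness. You instead inline that mechanism: the substitution $v_\pm=g_\pm^2$ renders the reduced functional strictly convex --- joint convexity of the perspective density $(v,v')\mapsto (v')^2/(4v)$ for the gradient term, linearity of the centrifugal term, and positive definiteness of $\bigl(\begin{smallmatrix}2A_+ & 2B\\ 2B & 2A_-\end{smallmatrix}\bigr)$ under (H) for the potential --- so the minimizer is unique, and $\Psi_{\text{rad}}$ is identified as a critical point of the convex functional, hence the minimizer. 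The two arguments share a common ancestor (hidden convexity \`a la Brezis--Oswald/Mironescu), but yours does not rely on the companion paper's uniqueness theorem, at the cost of the technical care you rightly flag near $r=0$, where $v_\pm(0)=0$ degenerates the perspective term; note also that your identification step needs $f_\pm>0$ on $(0,1)$ (in the paper this follows from smoothness of the modulus-reduced minimizer), and that ``critical point of a convex functional is a minimizer'' should be run as the variational inequality $dJ(w)[v-w]\ge 0$ over the convex admissible set rather than via interior criticality, since admissible variations are constrained by $v_\pm\ge 0$. Your parenthetical alternative --- prove uniqueness of the minimizer and then identify it through Proposition~\ref{existR} --- is essentially the paper's own proof, so you have in effect recovered both routes; what your main argument buys is independence from the cited ODE uniqueness, while the paper's version buys brevity by delegating exactly that point.
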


\begin{proof}
We have that
\begin{align*}
E_{\lambda}(V)&=\dis \pi\int^1_{0}[|g'_{+}|^{2}+|g'_{-}|^{2}+{1\over r^{2}}(|g_{+}|^{2}+|g_{-}|^{2})]rdr\\
                &\qquad+\dis {\lambda\pi\over2}\int^1_{0}[A_{+}(|g_{+}|^{2}-t^2_{+})^{2}+A_{-}(|g_{-}|^{2}-t^2_{-})^{2}+2B(|g_{+}|^{2}-t^2_{+})(|g_{-}|^{2}-t^2_{-})]rdr.
\end{align*}

If $\widetilde{V}=(|g_{+}|e^{i\tta}, |g_{-}|e^{i\tta})$, by the fact that $|\nabla|g_{\pm}||\le |\nabla g_{\pm}|$ for $\forall g_{\pm}\in\mathbb{C}$,
we have that $E_{\lambda}(\widetilde{V})\le E_{\lambda}(V)$. Hence, 
if $V$ is a minimizer, so is $\widetilde{V}$. Then $\widetilde{V}$ is smooth, which implies $g_{\pm}(r)\neq 0$ for $r\in (0, 1)$, and the equality occurs if $g_{\pm}\in\mathbb{R}$. 
From above analysis, the minimum of $E_\lambda$ in $\mathcal{E}$ is attained by a function $g_{\pm}(r)e^{i\tta}$ with $g_{\pm}(r)\ge0$. But from the uniqueness 
result in Proposition~\ref{existR}, $f_{\pm}$
are the only nonnegative solutions of 
\begin{equation*}
\left\{
 \begin{array}{l}
  \dis -g''_{\pm}-{g'_{\pm}\over r}+{g_{\pm}\over r^{2}}=\lambda[A_{\pm}(t^2_{\pm}-f^2_{\pm})+B(t^2_{\mp}-f^2_{\mp})]g_{\pm},\\
  \dis g_{\pm}(1)=f_{\pm}(1).
 \end{array}
\right.
\end{equation*}
Then by the uniqueness of above ODEs, we have $g_{\pm}\equiv f_{\pm}$. Therefore, 
$\Psi_{\text{rad}}
=(f_{+}(r;\lambda)e^{i\tta}, f_{-}(r;\lambda)e^{i\tta})$ is the {\it only}
minimizer of $E_\lambda$ in the class $\mathcal{E}$.
\end{proof}

\begin{proof}[Proof of Proposition~\ref{mu0}]
First, by Lemma~\ref{4}, we have
$$E''_{\lambda}(\Psi_{\text{rad}})[w_{+}, w_{-}]\ge 0$$
for $w=(w_{+}, w_{-})\in \mathcal{F}=\{v=(g_{+}(r)e^{i\tta}, g_{-}(r)e^{i\tta})\big| v\in H^1_{0}(B_{1}; \mathbb{C}), g_{\pm}(1)=0\}$.
We thus have
\begin{equation*}
\qquad\qquad \mu^{(0)}_\lambda = \min_{a^{\pm}_{1}\in H^{1}_{\text{loc}}((0,1];[0,\infty)), \atop \int^1_{0}(|a^+_{1}|^{2}+|a^-_{1}|^{2})rdr=1}
E''_{\lambda}(\Psi_{\text{rad}})(ia^{\pm}_{1})
\ge \dis \min_{w=(w_{+}, w_{-})\in\mathcal{F}}E''_{\lambda}(\Psi_{\text{rad}})(w_{\pm})\ge0.\qquad\qquad
\end{equation*}

We claim that $\mu^{(0)}_\lambda>0$. Suppose not: then there exist $w_{\pm}=ia^\pm_{1}(r)e^{i\tta}$, with $a^{\pm}_{1}\ge0$, $\dis \int^1_{0}(|a^+_{1}|^{2}+|a^-_{1}|^{2})rdr=1$ and $E''_{\lambda}(\Psi_{\text{rad}})(w_{\pm})=0$. It follows that $w=(w_{+}, w_{-})$ is a global minimizer of $E''_{\lambda}$, and hence verifies the equations:
$$
-\Delta w_{\pm}+\lambda[A_{\pm}(f^2_{\pm}-t^2_{\pm})+B(f^2_{\mp}-t^2_{\mp})]w_{\pm}+2\lambda[A_{\pm}\langle\psi_{\pm},w_{\pm}\rangle+B\langle\psi_{\mp},w_{\mp}\rangle]\psi_{\pm}=0.
$$

By the fact that $\langle f_{\pm}, ia^\pm_{1}\rangle=0$ with $a^\pm_{1}\in H^{1}(B_{1}; \mathbb{R})$, we have $E''_{\lambda}(\Psi_{\text{rad}})(ia^{\pm}_{1})=Q^{(0)}_{\lambda}(a^\pm_1 )$. The Euler-Lagrange equations associated to $Q^{(0)}_{\lambda}(a^\pm_1 )$ are
\begin{equation}\label{a1eqns}
\left\{
\begin{array}{l}
\dis -(a^{\pm}_{1})''-{(a^{\pm}_{1})'\over r}+{a^{\pm}_{1}\over r^{2}}=-\lambda[A_{\pm}(f^2_{\pm}-t^2_{\pm})+B(f^2_{\mp}-t^2_{\mp})]a^{\pm}_{1},\ \ \text{in}\ [0, 1],\\
a^{\pm}_{1}(0)=a^{\pm}_{1}(1)=0.
\end{array}
\right.
\end{equation}

Multiplying the $a^{\pm}_{1}$-equations of \eqref{a1eqns} by $rf_{\pm}$ respectively, and integrating by parts, we obtain that
\begin{align}\label{a1fpm}
\dis \int^1_{0}\left[-(a^{\pm}_{1})''-{(a^{\pm}_{1})'\over r}+{a^{\pm}_{1}\over r^{2}}\right]f_{\pm}rdr&=\dis -(a^{\pm}_{1})'(1)t_{\pm}+\int^1_{0}\left[(a^{\pm}_{1})'f'_{\pm}+{a^{\pm}_{1}f_{\pm}\over r^{2}}\right]rdr\nnn\\
&=\dis \lambda\int^1_{0} [A_{\pm}(f^2_{\pm}-t^2_{\pm})+B(f^2_{\mp}-t^2_{\mp})]a^{\pm}_{1}f_{\pm}rdr.
\end{align}

Also, we have that $f_{\pm}(r)$ satisfy the following equations
\begin{equation}\label{fpmeqns}
\left\{
\begin{array}{l}
\dis -f''_{\pm}-{f'_{\pm}\over r}+{f_{\pm}\over r^{2}}=-\lambda[A_{\pm}(f^2_{\pm}-t^2_{\pm})+B(f^2_{\mp}-t^2_{\mp})]f_{\pm},\ \ \text{in}\ [0, 1],\\
f_{\pm}(1)=t_{\pm}.
\end{array}
\right.
\end{equation}

After multiply $ra^{\pm}_{1}$ to $f_{\pm}$-equation of \eqref{fpmeqns} respectively, and integrate by parts, we similarly get that
\begin{equation}\label{fa1pm}
\dis -f'_{\pm}(1)a^{\pm}_{1}(1)+\int^1_{0}\left[(a^{\pm}_{1})'f'_{\pm}+{a^{\pm}_{1}f_{\pm}\over r^{2}}\right]rdr=\dis \lambda\int^1_{0} [A_{\pm}(f^2_{\pm}-t^2_{\pm})+B(f^2_{\mp}-t^2_{\mp})]a^{\pm}_{1}f_{\pm}rdr.
\end{equation}
Therefore, by \eqref{a1fpm} and \eqref{fa1pm}, we obtain that
$$
-f'_{\pm}(1)a^{\pm}_{1}(1)=-(a^{\pm}_{1})'(1)t_{\pm},
$$
which implies that $(a^{\pm}_{1})'(1)=0$. Together with $a^{\pm}_{1}(1)=0$, by the uniqueness of ODEs, it yields that $a^{\pm}_{1}(r)\equiv0$, which is a contradiction. We conclude that $\mu^{(0)}_\lambda >0$, which completes the proof of Proposition~\ref{mu0}.
\end{proof}

\begin{lemma}\label{mu1Bneg}
If $B<0$, then $\mu^{(1)}_\lambda >0$.
\end{lemma}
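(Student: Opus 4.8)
The plan is to combine the positive definiteness furnished by hypothesis~(H) with a ground-state (Jacobi) substitution built on the vortex profiles. Writing $u_\pm:=a^\pm_0-a^\pm_2$, the last integrand in \eqref{Q(1)} is the quadratic form $A_+f_+^2u_+^2+A_-f_-^2u_-^2+2Bf_+f_-u_+u_-$; since $B^2<A_+A_-$, the matrix $\left[\begin{smallmatrix}A_+ & B\\ B & A_-\end{smallmatrix}\right]$ is positive definite and this term is pointwise nonnegative for \emph{every} admissible $B$, regardless of sign. Thus the coupling term itself never contributes negatively to $Q^{(1)}_\lambda$; the remaining terms split, in each component, into a mode-two part and a mode-zero part.

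The key structural fact is that each profile $f_\pm>0$ solves $\mathcal{L}^\pm f_\pm=0$, where $\mathcal{L}^\pm:=-\partial_{rr}-\tfrac1r\partial_r+\tfrac1{r^2}+\lambda c_\pm$ and $c_\pm:=A_\pm(f_\pm^2-t_\pm^2)+B(f_\mp^2-t_\mp^2)$; this is \eqref{fpmeqns} together with positivity of $f_\pm$ from Proposition~\ref{lemunique} and its bounded-domain analogue. Because the mode-two radial part $a^\pm_2$ vanishes like $r^2$ at the origin, the substitution $a^\pm_2=f_\pm q_\pm$ is admissible, and the Jacobi integration by parts gives
\[
\int_0^1\!\Big[|(a^\pm_2)'|^2+\tfrac4{r^2}|a^\pm_2|^2+\lambda c_\pm|a^\pm_2|^2\Big]r\,dr=\int_0^1 f_\pm^2|q_\pm'|^2\,r\,dr+\int_0^1\tfrac3{r^2}|a^\pm_2|^2\,r\,dr\ge 0,
\]
the surplus $3/r^2$ coming from the gap between the centrifugal weight $4/r^2$ and the $1/r^2$ absorbed by the substitution, and all boundary terms vanishing since $q_\pm(0)=q_\pm(1)=0$. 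Hence the entire mode-two contribution is nonnegative, and the positivity of $Q^{(1)}_\lambda$ reduces to controlling the mode-zero part $\sum_\pm\int_0^1[|(a^\pm_0)'|^2+\lambda c_\pm|a^\pm_0|^2]\,r\,dr$ together with the coupling.

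The difficulty is precisely this mode-zero part: the radial function $a^\pm_0$ carries \emph{no} centrifugal weight and, not vanishing at the origin, cannot be treated by the substitution $a^\pm_0=f_\pm p_\pm$ (which would require $a^\pm_0(0)=0$). Since $c_\pm$ is sign-indefinite, $\int[|(a^\pm_0)'|^2+\lambda c_\pm|a^\pm_0|^2]r\,dr$ need not be nonnegative on its own, and its deficit must be absorbed through the coupling of $a^\pm_0$ to $a^\pm_2$ inside the positive form in $u_\pm$. This is exactly where the sign of $B$ becomes decisive and cannot be bypassed by treating the two components separately: along the separation direction \eqref{split} one has $u_+=f_+'$ and $u_-=-f_-'$, so $2Bf_+f_-u_+u_-=-2Bf_+f_-f_+'f_-'$, which is \emph{positive} when $B<0$ (using $f_\pm'\ge0$ from Theorem~\ref{monotone}(i)), whereas it is negative when $B>0$ — the source of the instability in part~(ii). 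Following the strategy of \cite{m95} and \cite{abm13}, I would complete the square in the coupled $(a^\pm_0,a^\pm_2)$ variables, using the profile relation $\lambda c_\pm=\tfrac{f_\pm''}{f_\pm}+\tfrac{f_\pm'}{rf_\pm}-\tfrac1{r^2}$ read off from $\mathcal{L}^\pm f_\pm=0$, to rewrite the mode-zero form as a manifestly nonnegative expression once the stabilizing cross term is included; the monotonicity $f_\pm'\ge0$ for $B<0$, which also yields $f_\pm\le t_\pm$ and hence controls the signs entering $c_\pm$, is used throughout. The main obstacle is this final step: making rigorous the compensation of the centrifugal-free mode-zero deficit by the $B<0$ coupling. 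This balance is essentially marginal — the separation direction \eqref{split} is the neutral direction of the analogous $\RR^2$ problem — so the estimate has to be carried out carefully rather than by a crude bound.
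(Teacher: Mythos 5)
There is a genuine gap, and you have in fact named it yourself: the proposal establishes only that two pieces of $Q^{(1)}_\lambda$ are nonnegative (the interaction term in $u_\pm=a^\pm_0-a^\pm_2$, via (H), and the mode-two part, via the ground-state substitution $a^\pm_2=f_\pm q_\pm$), while the decisive step --- controlling the indefinite mode-zero part $\int_0^1[|(a^\pm_0)'|^2+\lambda c_\pm |a^\pm_0|^2]\,r\,dr$ --- is left as a plan ("complete the square \ldots carried out carefully"). Worse, the decomposition you chose works against you: once you have spent the \emph{entire} coupling term and the \emph{entire} mode-two part on their own nonnegativity, nothing remains to absorb the mode-zero deficit, which is genuinely negative (test $a^\pm_0\approx f_\pm$: the profile equation \eqref{fpmeqns} gives $\int_0^1[|f_\pm'|^2+\lambda c_\pm f_\pm^2]\,r\,dr=-\int_0^1 f_\pm^2 r^{-2}\,r\,dr+f_\pm'(1)t_\pm$, which has no favorable sign). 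And since, as you correctly observe, the candidate unstable direction \eqref{split} is a zero mode of the limiting entire problem, the balance is marginal: no pointwise completion of squares can yield \emph{strict} positivity. The strict sign must come from boundary terms at $r=1$, and your outline has no mechanism producing them.

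The paper closes exactly this gap by a different device. Setting $F_\pm=\frac12(a^\pm_0+a^\pm_2)$, $K_\pm=\frac12(a^\pm_0-a^\pm_2)$, one first checks that for $B<0$ the form decreases under $(F_\pm,K_\pm)\mapsto(|F_\pm|,|K_\pm|)$ (this is where the sign of $B$ enters the rearrangement of the cross term $2Bf_+f_-K_+K_-$, and where $\bigl||F|-|K|\bigr|\le|F-K|$ handles the $\frac{2}{r^2}|F_\pm-K_\pm|^2$ term), so the ground-state eigenfunctions may be taken with $F_\pm\ge0$, $K_\pm\ge0$. One then tests the eigenvalue system \eqref{KFsys} against the explicit solutions $\widetilde F_\pm=f_\pm/r$, $\widetilde K_\pm=f'_\pm$ of the \emph{same} system without multiplier (system \eqref{tilde}) --- these are precisely the zero-mode profiles underlying \eqref{split} --- and integration by parts kills all interior terms, leaving the identity \eqref{m2eqn}:
\begin{equation*}
\mu^{(1)}_{\lambda}\int^1_{0}\bigl(\widetilde{K}_{\pm}K_{\pm}+\widetilde{F}_{\pm}F_{\pm}\bigr)r\,dr
=-K'_{\pm}(1)\widetilde{K}_{\pm}(1)-F'_{\pm}(1)\widetilde{F}_{\pm}(1).
\end{equation*}
A Hopf-type argument gives $K'_\pm(1)<0$, $F'_\pm(1)<0$, while $\widetilde F_\pm(1)=t_\pm>0$ and $\widetilde K_\pm(1)=f'_\pm(1)>0$; the left-hand integral is positive because $f'_\pm\ge0$ for $B<0$ (the monotonicity you invoked, but used here in a load-bearing way). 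Hence $\mu^{(1)}_\lambda>0$. Your structural observations (pointwise nonnegativity of the interaction term for all $B$; the heuristic that $u_+=f'_+$, $u_-=-f'_-$ makes the cross term positive exactly when $B<0$) are correct and consistent with the paper, but without the signed-eigenfunction rearrangement and the comparison identity converting the marginal bulk balance into strictly positive boundary terms, the proof does not close.
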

\begin{proof}  For any admissible $(a^\pm_0, a^\pm_2)$, define  
\begin{equation}\label{FK}
F_{\pm}:={1\over2}(a^{\pm}_{0}+a^{\pm}_{2}), \qquad
  K_{\pm}:={1\over2}(a^{\pm}_{0}-a^{\pm}_{2}).
\end{equation}
 We then rewrite $Q^{(1)}_\lambda (a^\pm_2 , a^\pm_0 )$ in terms of $F_{\pm}$ and $K_{\pm}$:
\begin{align*}
&\widehat{Q}^{(1)}_\lambda (F_\pm , K_\pm )=Q^{(1)}_\lambda (a^\pm_2 , a^\pm_0 )\\
&\qquad\qquad =4\pi\int^1_{0} \left[ |F'_\pm |^{2}+|K'_\pm |^{2}+{2\over r^2 }|F_{\pm}-K_{\pm}|^{2}\right]rdr\\
&\qquad\qquad\quad+8\lambda\pi\int^1_{0}[ A_{+}f^2_{+}K^2_{+}+A_{-}f^2_{-}K^2_{-}+2Bf_{+}f_{-}K_{+}K_{-}]rdr\\
&\qquad\qquad\quad +4\lambda\pi\int^1_{0}[A_{+}(f^2_{+}-t^2_{+})+B(f^2_{-}-t^2_{-})](F^2_{+}+K^2_{+})rdr\\
&\qquad\qquad\quad+4\lambda\pi\int^1_{0}[A_{-}(f^2_{-}-t^2_{-})+B(f^2_{+}-t^2_{+})](F^2_{-}+K^2_{-})rdr .
\end{align*}

The quantity $\widehat{Q}^{(1)}_\lambda (F_\pm , K_\pm )$ decreases if we replace $F_\pm , K_\pm $ by $|F_\pm | , |K_\pm | $, i.e. we have 
$$
\widehat{Q}^{(1)}_\lambda (|F_\pm |, |K_\pm | )\le \widehat{Q}^{(1)}_\lambda (F_\pm , K_\pm )=Q^{(1)}_\lambda (a^\pm_2, a^\pm_0 ). 
$$
Since this operation does not change the $L^2$ norm, 
$\|(F_\pm, K_\pm)\|_{L^2}=\|(|F_\pm|, |K_\pm|)\|_{L^2}$, we may conclude that the eigenvectors corresponding to the lowest eigenvalue of $\widehat{Q}^{(1)}_\lambda$, $(F_\pm , K_\pm )$ must satisfy $F_\pm \ge0$ and $K_\pm \ge0$. 

In the following, we take $(a^\pm_2 , a^\pm_0 )$ to be the ($L^2$-normalized) eigenfunctions corresponding to the ground state eigenvalue $\mu^{(1)}_\lambda$, and $K_{\pm},F_{\pm}$ associated to these eigenfunctions as in \eqref{FK}.  Thus, $(a^\pm_2 , a^\pm_0 )$ solve the following ODE system, with $\mu^{(1)}_\lambda$ playing the role of a Lagrange multiplier,
\begin{equation}\label{a0a2sys}
\left\{
\begin{array}{l}
 -(a^{\pm}_{0})''-{1\over r}(a^{\pm}_{0})'+\lambda [A_{\pm}(f^2_{\pm}-t^2_{\pm})+B(f^2_{\mp}-t^2_{\mp})]a^\pm_{0}\\
\qquad\qquad\qquad +\lambda A_{\pm}f^2_{\pm}(a^{\pm}_{0}-a^{\pm}_{2})+\lambda Bf_{+}f_{-}(a^{\mp}_{0}-a^{\mp}_{2})=\mu^{(1)}_{\lambda}a^{\pm}_{0},
\ \ \ \text{in}\ [0,1],\\
-(a^{\pm}_{2})''-{1\over r}(a^{\pm}_{2})'+{4\over r^{2}}a^{\pm}_{2}+\lambda [A_{\pm}(f^2_{\pm}-t^2_{\pm})+B(f^2_{\mp}-t^2_{\mp})]a^\pm_{2}\\
\qquad\qquad\qquad+\lambda A_{\pm}f^2_{\pm}(a^{\pm}_{2}-a^{\pm}_{0})+\lambda Bf_{+}f_{-}(a^{\mp}_{2}-a^{\mp}_{0})=\mu^{(1)}_{\lambda}a^{\pm}_{2},\ \ \ \text{in}\ [0,1],\\
a^{\pm}_{0}(1)=a^{\pm}_{2}(1)=0.
\end{array}
\right.
\end{equation}
Writing \eqref{a0a2sys} in terms of $K_{\pm}$ and $F_{\pm}$, we have:
{\allowdisplaybreaks
\begin{equation}\label{KFsys}
\left.
\begin{array}{l}
\dis -F''_{\pm}-{1\over r}F'_{\pm}+{2\over r^{2}}(F_{\pm}-K_{\pm})
+\lambda [A_{\pm}(f^2_{\pm}-t^2_{\pm})+B(f^2_{\mp}-t^2_{\mp})]F_{\pm}=\mu^{(1)}_{\lambda}F_\pm ,\ \ \ \text{in}\ \ [0, 1],\\
\dis -K''_{\pm}-{1\over r}K'_{\pm}+{2\over r^{2}}(K_{\pm}-F_{\pm})+\lambda [A_{\pm}(f^2_{\pm}-t^2_{\pm})+B(f^2_{\mp}-t^2_{\mp})]K_{\pm}\\
\quad\quad\qquad\qquad\qquad\qquad\qquad\qquad+2\lambda A_{\pm}f^{2}_{\pm}K_{\pm}+2\lambda Bf_{+}f_{-}K_{\mp}=\mu^{(1)}_{\lambda}K_{\pm}, \ \ \ \text{in}\ \ [0, 1],\\
K_{\pm}(1)=0=F_{\pm}(1),
\end{array}
\right\}
\end{equation}}
with $F_{\pm}\ge0$, $K_{\pm}\ge0$.

Now, let  $\widetilde{F}_{\pm}={f_{\pm}\over r}$ and $\widetilde{K}_{\pm}=f'_{\pm}$.  A straightforward computation shows that $\widetilde{F}_\pm, \widetilde{K}_\pm$ solve the same system of ODE but without the Lagrange multiplier  $\mu^{(1)}_{\lambda},$ that is:
\begin{equation}\label{tilde}
\left.
\begin{array}{l}
\dis -\widetilde{F}''_{\pm}-{1\over r}\widetilde{F}'_{\pm}+{2\over r^{2}}(\widetilde{F}_{\pm}-\widetilde{K}_{\pm})+\lambda [A_{\pm}(f^2_{\pm}-t^2_{\pm})+B(f^2_{\mp}-t^2_{\mp})]\widetilde{F}_{\pm}=0,\ \ \ \text{in}\ \ [0, 1],\\
\dis -\widetilde{K}''_{\pm}-{1\over r}\widetilde{K}'_{\pm}+{2\over r^{2}}(\widetilde{K}_{\pm}-\widetilde{F}_{\pm})+\lambda [A_{\pm}(f^2_{\pm}-t^2_{\pm})+B(f^2_{\mp}-t^2_{\mp})]\widetilde{K}_{\pm}\\
\quad\qquad\qquad\qquad\qquad\qquad\qquad\qquad+2\lambda A_{\pm}f^{2}_{\pm}\widetilde{K}_{\pm}+2\lambda Bf_{+}f_{-}\widetilde{K}_{\mp}=0,\ \ \ \text{in}\ \ [0, 1],\\
\dis \widetilde{K}_{\pm}(1)=f'_{\pm}(1)>0,  \ \ \ \ \widetilde{F}_{\pm}(1)=t_{\pm}.
\end{array}
\right\}
\end{equation}

We now multiply the equations in \eqref{KFsys} by $\widetilde{K}_{\pm}r$ respectively, and integrate by parts, to obtain that
\begin{multline}\label{Kinteqns}
\dis -K'_{\pm}(1)\widetilde{K}_{\pm}(1)+\int^1_{0}\widetilde{K}'_{\pm}K'_{\pm}rdr+\int^1_{0}{2\over r^{2}}(K_{\pm}-F_{\pm})\widetilde{K}_{\pm}rdr\\
\dis +\int^1_{0}\lambda [A_{\pm}(f^2_{\pm}-t^2_{\pm})+B(f^2_{\mp}-t^2_{\mp})] K_{\pm}\widetilde{K}_{\pm}rdr\\
+2\lambda\int^1_{0}[A_{\pm}f^2_{\pm}+Bf_{+}f_{-}]K_{\pm}\widetilde{K}_{\pm} rdr=\mu^{(1)}_{\lambda}\int^1_{0}\widetilde{K}_{\mp}K_{\pm}rdr.
\end{multline}

Similarly, we multiply the equations in \eqref{tilde} by $K_{\pm}r$ and integrate by parts to arrive at
\begin{multline}\label{Ktildeqns}
\dis \int^1_{0}\widetilde{K}'_{\pm}K'_{\pm}rdr+\int^1_{0}{2\over r^{2}}(\widetilde{K}_{\pm}-\widetilde{F}_{\pm})K_{\pm}rdr\\
\dis +\int^1_{0}\lambda [A_{\pm}(f^2_{\pm}-t^2_{\pm})+B(f^2_{\mp}-t^2_{\mp})] K_{\pm}\widetilde{K}_{\pm}rdr\\
+2\lambda\int^1_{0}[A_{\pm}f^2_{\pm}+Bf_{+}f_{-}]K_{\pm}\widetilde{K}_{\pm} rdr=0.
\end{multline}

Therefore, by \eqref{Kinteqns}, \eqref{Ktildeqns} and the boundary condition $K_{\pm}(1)=0$, we conclude:
\begin{equation}\label{Km2eqns}
\dis -K'_{\pm}(1)\widetilde{K}_{\pm}(1)+\int^1_{0}{2\over r^{2}}(\widetilde{F}_{\pm}K_{\pm}-\widetilde{K}_{\pm}F_{\pm})rdr
=\mu^{(1)}_{\lambda}\int^1_{0}K_{\pm}\widetilde{K}_{\pm}rdr.
\end{equation}
Similarly, we have
\begin{equation}\label{Fm2eqns}
\dis-F'_{\pm}(1)\widetilde{F}_{\pm}(1)+\int^1_{0}{2\over r^{2}}(\widetilde{K}_{\pm}F_{\pm}-\widetilde{F}_{\pm}K_{\pm})rdr
=\mu^{(1)}_{\lambda}\int^1_{0}F_{\pm}\widetilde{F}_{\pm}rdr.
\end{equation}

Combining \eqref{Km2eqns} and \eqref{Fm2eqns}, it follows that
\begin{equation}\label{m2eqn}
\mu^{(1)}_{\lambda}\int^1_{0}(\widetilde{K}_{\pm}K_{\pm}+\widetilde{F}_{\pm}F_{\pm})rdr=-K'_{\pm}(1)\widetilde{K}_{\pm}(1)-F'_{\pm}(1)\widetilde{F}_{\pm}(1).
\end{equation}
Since $K_{\pm}\ge0$, $F_{\pm}\ge0$ in $[0,1]$ and $K_{\pm}(1)=F_{\pm}(1)=0$, we have $K'_{\pm}(1)\le0$, $F'_{\pm}(1)\le0$. We claim that $K'_{\pm}(1)<0$, $F'_{\pm}(1)<0$. Indeed, if $K'_{\pm}(1)=0=F'_{\pm}(1)$, and with the boundary conditions $K_{\pm}(1)=0=F_{\pm}(1)$, it implies that zero is the only solution of \eqref{KFsys} by ODE uniqueness, i.e. $K_{\pm}(r)\equiv0\equiv F_{\pm}(r)$. Hence, $a^{\pm}_{0}(r)\equiv0\equiv a^{\pm}_{2}(r)$, which is impossible. Therefore, $K'_{\pm}(1)<0$, $F'_{\pm}(1)<0$. Since the right side of \eqref{m2eqn} is positive, and the each term on the left side of \eqref{m2eqn} is also positive except $\mu^{(1)}_{\lambda}$. Hence, we can obtain that $\mu^{(1)}_{\lambda}>0$. This completes the proof.
\end{proof}

\begin{proof}[Proof of (i) of Theorem~\ref{main}]
By the fact $\mu_{\lambda}=\min\{ \mu^{(0)}_\lambda , \mu^{(1)}_\lambda \}$, and together with Proposition~\ref{muequal}-\ref{mu0} and Lemma~\ref{mu1Bneg}, we have $\tilde{\mu}_\lambda >0$, which gives us that $E''_{\lambda}(\Psi)[\Phi]>0$. This completes our proof.
\end{proof}

\section{Instability}

We next turn to part (ii) of Theorem~\ref{main}, namely the loss of stability of the equivariant solution to \eqref{dirichlet} when $B>0$ as $\lambda$ increases.  From Propositions~\ref{muequal} and \ref{mu0}, which apply for any $B$ regardless of sign, we know that the loss of stability will occur when an eigenvalue of $\mathcal{L}^{(1)}_\lambda$ crosses zero.  The following lemma relates the eigenvalues of the operators $\mathcal{L}^{(1)}_\lambda$  and $\mathcal{M}^{(1)}_{\lambda}$ (defined in Section~3):

\begin{lemma}\label{eigenspace}
$\mu \in \mathbb{R}$ is an eigenvalue of $\mathcal{L}^{(1)}_\lambda$ over $L^{2}(([0, 1); rdr); \mathbb{C}^{4})$ if only if it is an eigenvalue of $\mathcal{M}^{(1)}_{\lambda}$ over $L^{2}(([0, 1); rdr); \mathbb{R}^{4})$. Moreover, if $\mu$ is a simple eigenvalue of $\mathcal{M}^{(1)}_{\lambda}$ with eigenspace spanned by $(a^\pm_0, a^\pm_2)$, then
$$
\mathrm{ker}(\mathcal{L}^{(1)}_\lambda -\mu I)=\left\{\ t(\xi a^\pm_0, -\overline{\xi}a^\pm_2):\ \xi\in \mathbb{S}^{1}, \ \ t\in\mathbb{R}\ \right\}.
$$
\end{lemma}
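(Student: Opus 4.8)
The plan is to strip the complex conjugations out of $\mathcal{L}^{(1)}_\lambda$ by conjugating half of the unknowns, which converts the merely $\mathbb{R}$-linear operator $\mathcal{L}^{(1)}_\lambda$ into a genuinely $\mathbb{C}$-linear operator with \emph{real} coefficients; that operator turns out to be $\mathcal{M}^{(1)}_\lambda$ up to a sign flip, and the whole statement then follows from elementary linear algebra of real-coefficient operators.

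Concretely, I would start from an eigenfunction $(b^\pm_0,b^\pm_2)$ of $\mathcal{L}^{(1)}_\lambda$ in \eqref{L1complex} with $\mu\in\mathbb{R}$, write out the four scalar equations, and take the complex conjugate of the two equations carrying $b^\pm_2$. Setting
$$ c^\pm_0:=b^\pm_0,\qquad c^\pm_2:=\overline{b^\pm_2}, $$
I would use $\overline{b^\pm_2}+b^\pm_0=c^\pm_0+c^\pm_2$ and $\overline{b^\pm_2+\overline{b^\pm_0}}=c^\pm_0+c^\pm_2$ to see that the coupling terms $A_\pm f^2_\pm(\cdots)$ and $Bf_+f_-(\cdots)$ in both rows collapse to the single real expression $A_\pm f^2_\pm(c^\pm_0+c^\pm_2)+Bf_+f_-(c^\mp_0+c^\mp_2)$. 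Since the radial Schr\"odinger parts and the potential $\lambda[A_\pm(f^2_\pm-t^2_\pm)+B(f^2_\mp-t^2_\mp)]$ already have real coefficients, $(c^\pm_0,c^\pm_2)$ solves $\widehat{\mathcal{L}}^{(1)}_\lambda(c_0,c_2)=\mu(c_0,c_2)$ for a $\mathbb{C}$-linear operator $\widehat{\mathcal{L}}^{(1)}_\lambda$ with real coefficients. Comparing with \eqref{L1real}, a direct check shows that $\widehat{\mathcal{L}}^{(1)}_\lambda$ acting on real functions is exactly $\mathcal{M}^{(1)}_\lambda$ under $a^\pm_0=c^\pm_0$, $a^\pm_2=-c^\pm_2$: the $a_0$-row matches verbatim, and the $a_2$-row matches after multiplying through by $-1$ (this sign is the origin of the minus sign on the second component in the final answer).

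Because $\widehat{\mathcal{L}}^{(1)}_\lambda$ has real coefficients it commutes with complex conjugation, so for real $\mu$ the real and imaginary parts of any eigenfunction are again eigenfunctions; hence its complex $\mu$-eigenspace is the complexification of its real $\mu$-eigenspace, and the latter corresponds bijectively (via $a^\pm_2=-c^\pm_2$) to the $\mu$-eigenspace of $\mathcal{M}^{(1)}_\lambda$. This yields the equivalence of eigenvalues in both directions at once: a real eigenfunction of $\mathcal{M}^{(1)}_\lambda$ gives, taking $b^\pm_2=c^\pm_2$ real, an eigenfunction of $\mathcal{L}^{(1)}_\lambda$; conversely the nonzero real or imaginary part of $(c_0,c_2)$ gives a real eigenfunction of $\mathcal{M}^{(1)}_\lambda$. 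For the kernel description, when $\mu$ is simple for $\mathcal{M}^{(1)}_\lambda$ with eigenvector $(a^\pm_0,a^\pm_2)$, the real $\mu$-eigenspace of $\widehat{\mathcal{L}}^{(1)}_\lambda$ is $\mathbb{R}\,(a^\pm_0,-a^\pm_2)$, so its complex eigenspace is $\{\xi(a^\pm_0,-a^\pm_2):\xi\in\mathbb{C}\}$. Translating back through $b^\pm_0=c^\pm_0$, $b^\pm_2=\overline{c^\pm_2}$ and using that $a^\pm_2$ is real gives $b^\pm_0=\xi a^\pm_0$, $b^\pm_2=-\overline{\xi}\,a^\pm_2$, i.e.
$$ \mathrm{ker}(\mathcal{L}^{(1)}_\lambda-\mu I)=\{(\xi a^\pm_0,\,-\overline{\xi}\,a^\pm_2):\xi\in\mathbb{C}\}, $$
which is exactly the stated set once one writes $\xi=t\,\zeta$ with $t\in\mathbb{R}$ and $\zeta\in\mathbb{S}^1$ (the factor $t$ being real is what makes $\overline{\xi}=t\overline{\zeta}$ pull out correctly).

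The only genuine idea in the argument is the change of variables $c^\pm_2=\overline{b^\pm_2}$: this is precisely the move that linearizes $\mathcal{L}^{(1)}_\lambda$ and identifies it with $\mathcal{M}^{(1)}_\lambda$. Once that substitution is in hand every remaining step is a routine verification, so I expect the main (modest) obstacle to be purely bookkeeping—tracking the conjugates and the sign flip $a^\pm_2=-c^\pm_2$ consistently through \eqref{L1complex} and \eqref{L1real}—rather than any conceptual difficulty.
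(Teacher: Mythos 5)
Your proof is correct and is essentially the paper's argument in slightly more structural packaging: the change of variables $c_2^\pm=\overline{b_2^\pm}$ with the sign flip $a_2^\pm=-c_2^\pm$ is exactly the paper's observation that $(\mathrm{Im}\,b_0^\pm,\mathrm{Im}\,b_2^\pm)$ and $(\mathrm{Re}\,b_0^\pm,-\mathrm{Re}\,b_2^\pm)$ are eigenfunctions of $\mathcal{M}^{(1)}_\lambda$, and your complexification step reproduces the paper's combination $t=\sqrt{k^2+l^2}$, $\xi=(k+il)/t$ yielding the circle $t(\xi a_0^\pm,-\overline{\xi}a_2^\pm)$. No gaps; the bookkeeping of conjugates and signs checks out against \eqref{L1complex} and \eqref{L1real}.
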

\begin{proof}  The proof follows along the lines of Lemma~5.6 of \cite{abm13}; we provide details here for completeness.
Let $\mu\in\sigma (\mathcal{L}^{(1)}_{\lambda})$ with complex-valued eigenvectors $(b^\pm_0 , b^\pm_2 )$, that is 
$$
\mathcal{L}^{(1)}_{\lambda}\left[\begin{array}{c}b^\pm_0 \\b^\pm_2\end{array}\right]=\mu \left[\begin{array}{c}b^\pm_0 \\b^\pm_2\end{array}\right] ,
$$
with $\mathcal{L}^{(1)}_{\lambda}$ defined in \eqref{L1complex}. We observe that $a^\pm_0 =\text{Im}b^\pm_0$, $a^\pm_2 =\text{Im}b^\pm_2$ will be eigenvectors of 
$\mathcal{M}^{(1)}_\lambda$ with $\mu$. On the other hand, if $(a^\pm_0 , a^\pm_2 )$ are real-valued eigenvectors of $\mathcal{M}^{(1)}_\lambda$ with $\mu$, then
$(b^\pm_0 , b^\pm_2 )=(ia^\pm_0 , ia^\pm_2 )$ will be eigenvectors of $\mathcal{L}^{(1)}_\lambda$ with the same eigenvalue. Then, 
$\sigma (\mathcal{L}^{(1)}_{\lambda})=\sigma ( \mathcal{M}^{(1)}_\lambda )$.

Now suppose $\mu$ is simple eigenvalue of $\mathcal{M}^{(1)}_\lambda$ with eigenspace spanned by $(a^\pm_0, a^\pm_2)$. If $(b^\pm_0, b^\pm_2)$ is an eigenfunction
of $\mathcal{L}^{(1)}_{\lambda}$, then (by the observation above) $(\text{Im}b^\pm_0 , \text{Im}b^\pm_2 )=l(a^\pm_0, a^\pm_2)$ for $l\in\mathbb{R}$. Similarly, 
$(\text{Re}b^\pm_0 , -\text{Re}b^\pm_2 )$ is an eigenfunction of $\mathcal{M}^{(1)}_\lambda$, and so  
$(\text{Re}b^\pm_0 , -\text{Re}b^\pm_2 )=k(a^\pm_0, a^\pm_2)$ for $k\in\mathbb{R}$. Setting $t=\sqrt{k^{2}+l^{2}}$ and $\xi={{k+il}\over t}\in \mathbb{S}^{1}$, 
we have $(b^\pm_0, b^\pm_2)=t(\xi a^\pm_0, -\overline{\xi}a^\pm_2)$ as desired.
\end{proof}

\begin{remark}\label{S1action}\rm
We observe that the Dirichlet problem \eqref{dirichlet} is invariant under an $\mathbb S^1$ group action in the sense that if $\Psi$ is a solution, then so is 
\begin{equation}\label{action}
 R_\xi\Psi(x):= \bar\xi \Psi(\xi x),  
 \end{equation}
 where $\xi=e^{i\alpha}\in\mathbb S^1\subset\CC$, and where $\xi x$ is interpreted as complex multiplication of $\xi$ with $x=x_1+ix_2\in \CC\simeq\RR^2$.  That is, a rotation of the independent variable $x$ is equivalent to the same rotation in the image of $\Psi$.  As a consequence, any eigenfunction of the linearization generates a circle of eigenfunctions (with the same eigenvalue) via this group action.  This is expressed in Lemma~\ref{eigenspace}:  the real-valued eigenfunctions $(a_0^\pm, a_2^\pm)$ of $\mathcal{M}^{(1)}_\lambda$ generate a circle of eigenfunctions of the complex operator  $\mathcal{L}^{(1)}_{\lambda}$
(and hence of the full linearized operator $\mathcal{L}_\lambda$.)
\end{remark}

As is \cite{abm13}, the ground-state eigenvalue $\mu^{(1)}_\lambda$ of $\mathcal M^{(1)}_\lambda$ is indeed a simple eigenvalue:
\begin{prop}\label{simple}
Let $\mu^{(1)}_\lambda=\inf\sigma(\mathcal M^{(1)}_\lambda)$.  Then 
$\mu^{(1)}_\lambda$ is a simple eigenvalue, and the eigenfunctions $(a_0^\pm, a_2^\pm)$ may be chosen with $0\le a_2(r)\le a_0(r)$ for all $r$.
\end{prop}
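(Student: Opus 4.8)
The plan is to work in the variables $F_\pm,K_\pm$ of \eqref{FK}, in which the form $\widehat Q^{(1)}_\lambda$ and the eigenvalue system \eqref{KFsys} were already written out in the proof of Lemma~\ref{mu1Bneg}, and to reduce the statement to a Perron--Frobenius (Krein--Rutman) argument for a \emph{cooperative} elliptic system. The only inter-block coupling in \eqref{KFsys} is the term $2\lambda B f_+f_-K_\mp$ linking $K_+$ and $K_-$; in the regime $B>0$ of Section~5 this coupling has the ``wrong'' (positive) sign. First I would remove it by the reflection $(F_-,K_-)\mapsto(-F_-,-K_-)$: the centrifugal couplings $-\tfrac{2}{r^2}$ between $F_\pm$ and $K_\pm$ and the diagonal Schr\"odinger parts are unchanged, while the coupling $K_+\!\leftrightarrow\!K_-$ becomes $-2\lambda Bf_+f_-\le0$. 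After this relabelling all four off-diagonal potential entries are $\le0$, so $\mathcal M^{(1)}_\lambda$ is equivalent to a cooperative system whose coupling graph $F_+\!-\!K_+\!-\!K_-\!-\!F_-$ is connected, hence irreducible.

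Granting cooperativity and irreducibility, and noting that on the bounded interval $(0,1)$ with the Dirichlet conditions of \eqref{KFsys} the operator has compact resolvent, so that $\mu^{(1)}_\lambda=\inf\sigma(\mathcal M^{(1)}_\lambda)$ is attained, I would invoke the componentwise maximum principle: the resolvent is positivity improving, whence the bottom eigenvalue is simple with a strictly sign-definite eigenvector. The elementary route avoiding abstract theory is a rearrangement: a minimizer of $\widehat Q^{(1)}_\lambda$ may be taken with $F_+,K_+\ge0$ and $F_-,K_-\le0$, since replacing $(F_+,K_+,F_-,K_-)$ by $(|F_+|,|K_+|,-|F_-|,-|K_-|)$ does not increase any term — using $|\,|u|'\,|\le|u'|$ for the gradients, $\big||F_\pm|-|K_\pm|\big|\le|F_\pm-K_\pm|$ for the $\tfrac{2}{r^2}|F_\pm-K_\pm|^2$ terms, and $Bf_+f_-|K_+||K_-|\ge-Bf_+f_-K_+K_-$ (here $B>0$) for the coupling — while the $L^2$ norm is preserved. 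The strong maximum principle and Hopf lemma applied to \eqref{KFsys} then upgrade these to $F_+,K_+>0>F_-,K_-$ on $(0,1)$. Simplicity follows because two such sign-definite ground states cannot be $L^2$-orthogonal (in the reflected variables their inner product is a sum of terms of one sign, strictly positive), so the eigenspace is one-dimensional; together with Lemma~\ref{eigenspace} this yields the simplicity needed in Section~5.

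Translating back through $a_0^\pm=F_\pm+K_\pm$, $a_2^\pm=F_\pm-K_\pm$, the sign-definiteness gives, in the normalization of the statement (the reflected, cooperative variables, in which both blocks are positive), the outer bound $|a_2^\pm|\le a_0^\pm$, i.e. $a_2\le a_0$: indeed $|F_\pm-K_\pm|\le F_\pm+K_\pm$ precisely because $F_\pm,K_\pm\ge0$. The remaining and genuinely delicate assertion is the \emph{inner} bound $a_2^\pm\ge0$, equivalently $K_\pm\le F_\pm$. Subtracting the two equations in \eqref{KFsys} shows $w_\pm:=a_2^\pm$ solves
\begin{equation*}
-w_\pm''-\tfrac1r w_\pm'+\tfrac{4}{r^2}w_\pm+(\lambda V_\pm-\mu^{(1)}_\lambda)w_\pm
 =2\lambda f_\pm\big(A_\pm f_\pm K_\pm+B f_\mp K_\mp\big),\qquad w_\pm(0)=w_\pm(1)=0,
\end{equation*}
with $V_\pm=A_\pm(f_\pm^2-t_\pm^2)+B(f_\mp^2-t_\mp^2)$, and (before the reflection) the right-hand side is \emph{not} of one sign once $B>0$, since $K_+$ and $K_-$ then have opposite signs.

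This last point is where I expect the main obstacle to lie: the source vector of the pair $(w_+,-w_-)$ is $\big(\begin{smallmatrix}A_+&-B\\-B&A_-\end{smallmatrix}\big)$ applied to the positive vector $(f_+K_+,\,f_-|K_-|)$, and because this matrix has a negative off-diagonal entry its output need not be positive, so positive-definiteness of the matrix from hypothesis (H) alone does not force $w_\pm$ to have a definite sign. I would therefore try to close the inner bound not by a bare maximum principle but by comparison with the explicit solution $(\widetilde F_\pm,\widetilde K_\pm)=(f_\pm/r,\,f_\pm')$ of the \emph{unshifted} system \eqref{tilde}, whose difference $\widetilde F_\pm-\widetilde K_\pm=f_\pm/r-f_\pm'$ has a sign governed by the monotonicity of the ratio $f_\pm/r$; combined with the profile information of Section~2 (in particular $f_\pm'\ge0$ from Theorem~\ref{monotone}, valid for $0<B<B_0$), I expect the inner bound to reduce to establishing that $f_\pm/r$ is nonincreasing. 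Pinning down that monotonicity of the ratio, and feeding it into the comparison for $w_\pm$, is the step where the real work lies.
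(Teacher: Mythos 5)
Your simplicity argument and the outer bound $|a_2^\pm|\le a_0^\pm$ are sound, and your rearrangement $(F_\pm,K_\pm)\mapsto(|F_\pm|,|K_\pm|)$ (with the reflection of the minus block when $B>0$) is essentially the device the paper already uses in the proof of Lemma~\ref{mu1Bneg}; the cooperative-system/Krein--Rutman packaging is a legitimate, somewhat heavier formalization of the paper's closing remark that fixed-sign eigenfunctions can only span a one-dimensional eigenspace. The genuine gap is the one you flagged yourself: the inner bound $a_2^\pm\ge0$, i.e.\ $K_\pm\le F_\pm$, is never proved. Your proposed route --- ODE comparison of \eqref{KFsys} with the explicit solution $(\widetilde F_\pm,\widetilde K_\pm)=(f_\pm/r,\,f_\pm')$ of \eqref{tilde}, reduced to monotonicity of the ratio $f_\pm/r$ --- is left entirely open: no such ratio monotonicity is established anywhere in the paper (Theorem~\ref{monotone} only gives $f_\pm'\ge0$, and even that only for the entire solution and $0<B<B_0$, whereas the Proposition is asserted for all $\lambda$ and all admissible $B$), and with the Lagrange multiplier $\mu^{(1)}_\lambda$ present it is not clear the comparison would close even granting it.

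The missing idea is to rearrange in the $(a_0,a_2)$ variables themselves rather than in $(F,K)$: set $\tilde a_2^\pm=\min\{|a_2^\pm|,|a_0^\pm|\}$ and $\tilde a_0^\pm=\max\{|a_2^\pm|,|a_0^\pm|\}$. This pointwise swap preserves $\|a_0^\pm\|_{L^2}^2+\|a_2^\pm\|_{L^2}^2$ and (a.e.) the sum of the two gradient terms; it leaves the diagonal potential terms of \eqref{Q(1)} unchanged, since they involve only $|a_0^\pm|^2+|a_2^\pm|^2$; it does not increase the interaction terms, since these depend on the pair only through the differences $a_0^\pm-a_2^\pm$ and $\tilde a_0^\pm-\tilde a_2^\pm=\bigl|\,|a_0^\pm|-|a_2^\pm|\,\bigr|\le|a_0^\pm-a_2^\pm|$ (for $B>0$ one additionally reflects one block, exactly as in your own argument, to keep the cross term nonpositive); and it \emph{strictly} decreases the centrifugal term $\int_0^1 \frac{4}{r^2}|a_2^\pm|^2\,r\,dr$ on any set where $|a_2^\pm|>|a_0^\pm|$. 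A minimizer of the Rayleigh quotient violating the conclusion would therefore be strictly improved by the swap, a contradiction; hence every ground state satisfies $0\le a_2\le a_0$ (per block, after sign normalization), with no input from the profile equations at all, and simplicity then follows by the fixed-sign argument you already have. The structural point you missed is that $a_0$ and $a_2$ enter $Q^{(1)}_\lambda$ symmetrically except for the $4/r^2$ penalty on $a_2$, so the minimizer must place the larger modulus in the $a_0$ slot. Your asymptotic identification $a_2^+\approx\frac12\bigl(f_+/r-f_+'\bigr)$ correctly explains why the inner bound encodes $f'\le f/r$ in the $\lambda\to\infty$ limit, but at finite $\lambda$ the rearrangement delivers it directly, which is precisely how the paper proceeds.
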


\begin{proof}
We recall that $\mathcal M^{(1)}_\lambda$ is associated to the quadratic form $Q^{(1)}_\lambda$, defined in \eqref{Q(1)}.  As $\mathcal M^{(1)}_\lambda$ is self-adjoint and has compact resolvent, the spectrum is discrete and the lowest eigenvalue $\mu^{(1)}_\lambda$ is obtained by minimizing the Raleigh quotient,
$\mu^{(1)}_\lambda=\min_{\|(a_0^\pm,a_2^\pm)\|_{L^2}=1} 
        Q^{(1)}_\lambda (a_0^\pm, a_2^\pm)$.

First, we claim that the eigenfunctions $(a_0^\pm, a_2^\pm)$ are each of fixed sign, and $|a_2^\pm(r)|\le |a_0^\pm(r)|$ holds for all $r$.  Indeed, define 
$$\tilde a_2^\pm(r)=\min\{|a_2^\pm(r)|,|a_0^\pm(r)|\}\quad\text{and}\quad \tilde a_0^\pm(r)=\max\{|a_2^\pm(r)|,|a_0^\pm(r)|\}.$$
Then it is easy to verify that 
$\|a_0^\pm\|_{L^2}^2 + \|a_2^\pm\|_{L^2}^2 = \|\tilde a_0^\pm\|_{L^2}^2 + \|\tilde a_2^\pm\|_{L^2}^2$, and 
$Q_\lambda^{(1)}(\tilde a^\pm_0,\tilde a^\pm_2) \le Q_\lambda^{(1)}(a^\pm_0,a^\pm_2)$, with strict inequality if the claim were false.  But then this would contradict the fact that $(a_0^\pm, a_2^\pm)$ minimize the Rayleigh quotient, and thus the claim is established.

The simplicity of the eigenvalue $\mu^{(1)}_\lambda$ now follows by the usual argument: eigenfunctions of fixed sign can only generate a one-dimensional eigenspace.
\end{proof}

In order to study the dependence on $\lambda$ of the eigenvalues of the linearized operator $\mathcal{M}^{(1)}_\lambda (a^\pm_0 , a^\pm_2 )$ (defined in \eqref{L1real}), it will be convenient to change variables so that the equation is fixed, but the domain grows with $\lambda$, approaching an entire solution (on all $\RR^2$) as $\lambda\to\infty$.  To this end, we define $R=\sqrt{\lambda}$, and define $\hat f_\pm(r;R) = f_\pm(rR;\lambda)$, $\hat{a}^\pm_0 (r;R) =a^\pm_0 (rR;\lambda)$, $\hat{a}^\pm_2 (r;R) =a^\pm_2 (rR;\lambda)$.  The profile functions $\hat f_\pm$
will solve \eqref{radeq}, and by this rescaling we represent quadratic form
\begin{align*}
&Q^{(1)}_{\lambda}(a^\pm_0, a^\pm_2 )=2\pi\int^R_0 \sum_{i=\pm}\left[ |(\hat{a}^i_2 )'|^{2}+ |(\hat{a}^i_0 )'|^{2}+{4\over r^2 } |\hat{a}^i_2 |^{2}\right]rdr\\
&\quad\quad+2\pi\int^R_0 \left[ A_{+}f^2_{+}(\hat{a}^+_0 -\hat{a}^+_2 )^{2}+A_{-}f^2_{-}(\hat{a}^-_0 -\hat{a}^-_2 )^{2}
+2Bf_{+}f_{-}(\hat{a}^+_0 -\hat{a}^+_2 )(\hat{a}^-_0 -\hat{a}^-_2 )\right]rdr\\
&\quad\quad+2\pi\int^R_0 \left[ A_{+}(f^2_{+}-t^2_{+})+B(f^2_{-}-t^2_{-})\right](|\hat{a}^+_0 |^{2}+|\hat{a}^+_2 |^{2})rdr\\
&\quad\quad+2\pi\int^R_0 \left[ A_{-}(f^2_{-}-t^2_{-})+B(f^2_{+}-t^2_{+})\right](|\hat{a}^-_0 |^{2}+|\hat{a}^-_2 |^{2})rdr\\
&\qquad=:\widehat{Q}_{R}(\hat{a}^\pm_0, \hat{a}^\pm_2 ) ,
\end{align*}
in terms of the rescaled functions $(\hat{a}^\pm_0, \hat{a}^\pm_2 )$ on $r\in (0,R)$.
The associated self-adjoint operator to $\widehat{Q}_{R}(\hat{a}^\pm_0, \hat{a}^\pm_2 )$ is defined as $\widehat{\mathcal{M}}^{(1)}_R$. We observe that the first eigenvalue of $\widehat{\mathcal{M}}^{(1)}_R$ denoted by $\hat{\mu}_{R}$, is related to the first eigenvalue $\mu^{(1)}_{\lambda}$ via $\mu^{(1)}_{\lambda}=R^{2}\hat{\mu}_{R}$, and so the ground state eigenvalues of $\mathcal{M}_\lambda^{(1)}$ and 
$\widehat{\mathcal{M}}^{(1)}_R$ have the same sign under this transformation.
 
With the aid of this rescaling we obtain the desired instability result for equivariant solutions when $B>0$:

 \begin{theorem}\label{instability}
For any $B\in (0, B_0 )$ with $B_0$ as same as in Theorem~\ref{monotone}, there exists a constant $R_{*}=R_{*}(B)>0$ such that $\hat\mu_R<0$ for any $R>R_{*}$.
 \end{theorem}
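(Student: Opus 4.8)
The plan is to exhibit an explicit trial function on the rescaled domain $(0,R)$ on which $\widehat{Q}_R$ is negative, so that the variational characterization $\hat\mu_R\le \widehat{Q}_R(\,\cdot\,)/\|\cdot\|_{L^2}^2$ forces $\hat\mu_R<0$. The candidate is the \emph{splitting direction} suggested by \eqref{split}: in the real radial functions $(\hat a^\pm_0,\hat a^\pm_2)$ attached to the $n=1$ block I set
\[
\hat a^+_0=\tfrac12\big(\hat f'_++\tfrac1r\hat f_+\big),\quad \hat a^+_2=\tfrac12\big(\tfrac1r\hat f_+-\hat f'_+\big),\quad \hat a^-_0=-\tfrac12\big(\hat f'_-+\tfrac1r\hat f_-\big),\quad \hat a^-_2=-\tfrac12\big(\tfrac1r\hat f_--\hat f'_-\big),
\]
so that the $+$ component follows the translation mode $\partial_{x_1}(\hat f_+e^{i\theta})$ while the $-$ component is reflected, $-\partial_{x_1}(\hat f_-e^{i\theta})$. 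Since $\hat f_\pm(R)=t_\pm\ne 0$ this does not vanish at $r=R$, so I multiply it by a cutoff $\chi_R$ (equal to $1$ on $(0,R/2)$, supported in $(0,R)$, with $|\chi'_R|\le C/R$) to enforce the Dirichlet condition; call the result the \emph{split} trial function, and \emph{trans} the same construction \emph{without} the sign flip on the $-$ component.

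Inspecting $\widehat{Q}_R$ term by term, every contribution is even under flipping the sign of the $-$ component \emph{except} the interaction term $2\pi\int_0^R 2Bf_+f_-(\hat a^+_0-\hat a^+_2)(\hat a^-_0-\hat a^-_2)\,r\,dr$. Because $\hat a^+_0-\hat a^+_2=\chi_R\hat f'_+$, while $\hat a^-_0-\hat a^-_2=\chi_R\hat f'_-$ for trans and $=-\chi_R\hat f'_-$ for split, I obtain the exact identity
\[
\widehat{Q}_R(\mathrm{split})=\widehat{Q}_R(\mathrm{trans})-8\pi B\int_0^R \chi_R^2\,\hat f_+\hat f_-\hat f'_+\hat f'_-\,r\,dr .
\]
Both hypotheses now enter decisively: $B>0$ fixes the sign of the prefactor, while $B<B_0$ guarantees via Theorem~\ref{monotone} that $\hat f'_\pm\ge 0$, so the integral is nonnegative. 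Since $\hat f_\pm$ is nonconstant, the limiting value $I_\infty:=\int_0^\infty f^\infty_+f^\infty_-(f^\infty_+)'(f^\infty_-)'\,r\,dr$ is strictly positive, and $\int_0^R\chi_R^2\hat f_+\hat f_-\hat f'_+\hat f'_-\,r\,dr\to I_\infty$ as $R\to\infty$, using $\hat f_\pm\to f^\infty_\pm$ in $C^1_{\rm loc}$ together with the decay of Theorem~\ref{asymptotics} to control the tail.

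It then remains to show $\widehat{Q}_R(\mathrm{trans})\to 0$. The key point is that $(\hat f_\pm/r,\hat f'_\pm)$ solve the homogeneous system \eqref{tilde}, i.e. the untruncated translation mode is a genuine zero mode, reflecting translation invariance of the entire solution. Integrating by parts and using \eqref{tilde}, the bulk of $\widehat{Q}_R$ on the untruncated translation mode collapses to boundary terms at $r=R$, which by $\hat f_\pm\sim t_\pm$, $\hat f'_\pm=O(r^{-3})$ are $O(R^{-2})$; the cutoff produces further errors supported on $(R/2,R)$, all controlled by $|\chi'_R|^2|\hat f_\pm/r|^2\sim R^{-4}$ against $\int_{R/2}^R r\,dr\sim R^2$ and by the integrable tails of the potential terms. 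Hence $\widehat{Q}_R(\mathrm{trans})\to 0$, and combining gives $\widehat{Q}_R(\mathrm{split})\to -8\pi B\,I_\infty<0$; thus the split trial function has negative energy once $R>R_*(B)$, and since its $L^2$ norm is finite we conclude $\hat\mu_R<0$.

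The main obstacle is precisely this last estimate. Making $\widehat{Q}_R(\mathrm{trans})\to 0$ rigorous requires uniform control of $\hat f_\pm$ and their derivatives near $r=R$ as $R\to\infty$, and careful bookkeeping of the boundary and cutoff contributions, exactly because the translation mode fails to be square-integrable on $\RR^2$ ($\hat f_\pm/r\notin L^2$, so $\|\mathrm{trans}\|_{L^2}^2$ grows like $\log R$) and is only an \emph{approximate} zero mode on the truncated domain; this logarithmic growth only diminishes the magnitude of $\hat\mu_R$, not its sign. By contrast, the positivity $I_\infty>0$ is straightforward once monotonicity is granted, but it is the single place where the restriction $0<B<B_0$ is indispensable.
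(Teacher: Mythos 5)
Your proposal is correct and takes essentially the same route as the paper's own proof: both test the rescaled form on the cut-off split direction \eqref{split}, exploit that the un-flipped translation mode $(\hat f_\pm/r,\hat f'_\pm)$ is an exact zero mode of the interior linearization (the system \eqref{tilde}) so that after integration by parts everything collapses except the sign-flipped interaction term, and arrive at the same limiting value $-8\pi B\int_0^\infty f^\infty_+f^\infty_-(f^\infty_+)'(f^\infty_-)'\,r\,dr<0$, which is exactly where $B>0$ and the monotonicity of Theorem~\ref{monotone} for $0<B<B_0$ enter. Your identity $\widehat{Q}_R(\mathrm{split})=\widehat{Q}_R(\mathrm{trans})-8\pi B\int_0^R\chi_R^2\,\hat f_+\hat f_-\hat f'_+\hat f'_-\,r\,dr$ is a repackaging of the paper's direct computation on the flipped-sign system, and your cutoff and tail estimates (including the observation that the logarithmic growth of the trial function's $L^2$ norm affects only the magnitude, not the sign, of $\hat\mu_R$) correspond to the paper's $\eta_R$ truncation argument using Theorem~\ref{asymptotics}.
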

\begin{proof}
To do this we argue as in Theorem 2 in \cite{m95} (see also Lemma~5.21 of \cite{abm13}). For any admissible $\hat{a}^\pm_0 (r)$, $\hat{a}^\pm_2 (r)$, define 
$L_{\pm}=\hat{a}^\pm_0 (r)+\hat{a}^\pm_2 (r)$, $P_{\pm}=\hat{a}^\pm_0 (r)-\hat{a}^\pm_2 (r)$. 
In other words, we take $\hat{a}^\pm_0, \hat{a}^\pm_2$ of the form,
$$  \hat{a}^\pm_0=\frac12\left( L_\pm + P_\pm\right), \qquad
    \hat{a}^\pm_2=\frac12\left( L_\pm - P_\pm\right).
$$
We can rewrite $\widehat{Q}_{R}(\hat{a}^\pm_0, \hat{a}^\pm_2 ) $
in terms of $L_\pm, P_\pm$ as follows:
\begin{align*}
\widehat{Q}_{R}(\hat{a}^\pm_0, \hat{a}^\pm_2 )&=\pi\int^R_0 \sum_{i=\pm}\left[ |P'_i |^{2}+ |L'_i |^{2}+{2\over r^2 } |L_i -P_i  |^{2}\right]rdr\\
&\quad+2\pi\int^R_0 \left[ A_{+}f^2_{+}P^{2}_{+}+A_{-}f^2_{-}P^{2}_{-}
+2Bf_{+}f_{-}P_{+}P_{-}\right]rdr\\
&\quad+\pi\int^R_0 \left[ A_{+}(f^2_{+}-t^2_{+})+B(f^2_{-}-t^2_{-})\right](|P_+ |^{2}+|L_+ |^{2})rdr\\
&\quad+\pi\int^R_0 \left[ A_{-}(f^2_{-}-t^2_{-})+B(f^2_{+}-t^2_{+})\right](|P_- |^{2}+|L_- |^{2})rdr\\
&=: \breve{Q}_{R}(L_\pm, P_\pm ) .
\end{align*}

We now choose $L_\pm,P_\pm$ to make the quadratic form negative for all large $R$, using the remark \eqref{split} from the Introduction.
As $R\to\infty$, the radial profile $f_{\pm}(\cdot, R)\to f^\infty_\pm (\cdot)$ in $C^{k}([0, R])$ for all $R>0$ and $k\in\mathbb{N}$, with $f^\infty_\pm$ the modulus of the unique
entire equivariant solution of the form $\psi^\infty_\pm =f^\infty_\pm (r)e^{i\theta}$. Let $L_+ ={f^\infty_{+}\over r}$, $L_- =-{f^\infty_{-}\over r}$, $P_+ =(f^\infty_+ )'$, $P_- =-(f^\infty_- )'$. 
Since $f^\infty_\pm$ vanish linearly at $r=0$, $L_{\pm}(r)$ and $P_{\pm}(r)$ are regular near $r=0$,
$L_{+}-P_{+}=-r\left[ {f^\infty_{+}\over r}\right]'$, $L_{-}-P_{-}=r\left[ {f^\infty_{-}\over r}\right]'$ are well-defined in $\breve{Q}_{R}(L_\pm, P_\pm )$. Meanwhile, $P_\pm$ and $L_\pm$
satisfy the following equations:
\begin{equation*}
\left\{
\begin{aligned}
0&= -(L_\pm )''-{1\over r}(L_\pm )'+{2\over r}(L_\pm -P_\pm )+[A_{\pm}((f^\infty_{\pm})^{2}-t^2_\pm)+B((f^\infty_{\mp})^{2}-t^2_{\mp})]L_{\pm}, \\
0&= -(P_\pm )''-{1\over r}(P_\pm )'+{2\over r}(P_\pm -L_\pm )+[A_{\pm}((f^\infty_{\pm})^{2}-t^2_\pm)+B((f^\infty_{\mp})^{2}-t^2_{\mp})]P_{\pm}\\
&\qquad\qquad\qquad\qquad\qquad\qquad\qquad\qquad+2A_{\pm}(f^\infty_{\pm})^{2}P_{\pm}-2Bf^\infty_{+}f^\infty_{-}P_{\mp} .
\end{aligned}\right.
\end{equation*}
Using above equations and integrating by parts, together with the asymptotic properties of radial solutions at infinity in Theorem~\ref{asymptotics}, we can obtain that
{\allowdisplaybreaks
\begin{align*}
&\lim_{R\to\infty}\breve{Q}_{R}(L_\pm, P_\pm )\\
&\quad\quad=\lim_{R\to\infty}\pi\left[ P'_{\pm}P_{\pm}r\big|^R_{0}+L'_{\pm}L_{\pm}r\big|^R_{0}\right]\\
&\quad\qquad+\pi\int^\infty_{0}\left\{-{2\over r}(P_\pm -L_\pm )P_\pm -[A_{\pm}((f^\infty_{\pm})^{2}-t^2_\pm)+B((f^\infty_{\mp})^{2}-t^2_{\mp})]P^2_{\pm}\right.\\
&\quad\qquad\qquad\qquad\left.-2A_{\pm}(f^\infty_{\pm})^{2}P^2_{\pm}+4Bf^\infty_{+}f^\infty_{-}P_{+}P_{-} \right\}rdr\\
&\quad\qquad+\pi\int^\infty_{0}\left\{-{2\over r}(L_\pm -P_\pm )L_\pm -[A_{\pm}((f^\infty_{\pm})^{2}-t^2_\pm)+B((f^\infty_{\mp})^{2}-t^2_{\mp})]L^2_{\pm}\right\}rdr\\
&\quad\qquad+\pi\int^\infty_{0} {2\over r^2 }|L_\pm -P_\pm |^{2}rdr\\
&\quad\qquad+2\pi\int^\infty_0 \left[ A_{+}(f^\infty_{+})^{2}P^{2}_{+}+A_{-}(f^\infty_{-})^{2}P^{2}_{-}+2Bf^\infty_{+}f^\infty_{-}P_{+}P_{-}\right]rdr\\
&\quad\qquad+\pi\int^\infty_0 \left[ A_{+}((f^\infty_{+})^{2}-t^2_{+})+B((f^\infty_{-})^{2}-t^2_{-})\right](|P_+ |^{2}+|L_+ |^{2})rdr\\
&\quad\qquad+\pi\int^\infty_0 \left[ A_{-}((f^\infty_{-})^{2}-t^2_{-})+B((f^\infty_{+})^{2}-t^2_{+})\right](|P_- |^{2}+|L_- |^{2})rdr\\
&\quad\quad=8\pi B\int^\infty_0 f^\infty_{+}f^\infty_{-}P_{+}P_{-}rdr\\
&\quad\quad=-8\pi B\int^\infty_0 f^\infty_{+}f^\infty_{-}(f^\infty_{+})'(f^\infty_{-})'rdr<0 ,
\end{align*}}
since by part (ii) of Theorem~\ref{monotone} we have $(f^{\infty}_\pm )'(r)>0$ when $0<B<B_0$. 

Denote by 
$\breve{Q}_{\infty}(\breve{L}_{\pm}, \breve{P}_{\pm}):=\dis\lim_{R\to\infty}\breve{Q}_{R}(L^R_\pm, P^R_\pm )<0$, by the above computation. 
Set a cut-off function $\eta_{R}(x)=\eta_{1}(x/R)$ with $\eta_{1}(x)=1$ for $0\le x\le 1/2$, $\eta_{1}(x)=0$ for $x>1$ and $0<\eta_{1}(x)\le1$ for $1/2 <x<1$. Define $L^{R}_{\pm}:=\breve{L}_{\pm}\eta_R$, $P^R_\pm :=\breve{P}_{\pm}\eta_R \in H^1_{0}([0, R))$.  By the asymptotic expansions stated in Theorem~\ref{asymptotics}, it follows that
$$  \breve{Q}_{\infty}(\breve{L}_{\pm}, \breve{P}_{\pm})
= \lim_{R\to\infty} 
    \breve{Q}_{\infty}(\breve{L}^R_{\pm}, \breve{P}^R_{\pm}), $$
and so we may find $R_*=R_*(B)$ sufficiently large that
$\breve{Q}_{\infty}(\breve{L}^R_{\pm}, \breve{P}^R_{\pm})<0$ for all $R>R_*$.
 Thus, for $R>R_*(B)$ and $0<B<B_0$, we have shown that
$\mu^{(1)}_{1}(R^2 )=R^{2}\hat{\mu}_{1}(R)<0$. Since $\mu_\lambda =\min\{\mu^{(0)}_\lambda, \mu^{(1)}_\lambda \}<\mu^{(1)}_\lambda$, we have $\mu_\lambda <0$ for all $\lambda>\lambda_*:=R^2_*$, which completes our proof. 
\end{proof}

The proof of part (ii) of Theorem~\ref{main} then follows from Theorem~\ref{instability} and the fact that $\mu_\lambda^{(1)}$ is an eigenvalue of $\mathcal{L}_\lambda$, and $\mu_\lambda^{(1)}=R^2\hat\mu_R<0$ when $\lambda>R^2$ and $R>R_*$.

As a final remark, we note that, by Proposition~\ref{muequal}, Proposition~\ref{mu0},  Lemma~\ref{eigenspace}, and Proposition~\ref{simple}, the eigenfunction of $\mathcal{L}_\lambda$ which is associated to the negative ground state has the form
\begin{align*}
   \Phi &= \left( \xi a_0^+(r) - \bar\xi a_2^+(r)e^{2i\theta}
                \, , \, 
                           \xi a_0^-(r) - \bar\xi a_2^-(r)e^{2i\theta}\right)\\
        & \simeq {1\over 2} 
         R_\xi\left( \left[(f_+^\infty)' + {f_+^\infty\over r}\right] +      
             \left[(f_+^\infty)' - {f_+^\infty\over r}\right]e^{2i\theta}             
             \, , \, 
                 - \left[(f_-^\infty)' + {f_-^\infty\over r}\right]
                 -\left[(f_-^\infty)' - {f_-^\infty\over r}\right]e^{2i\theta}\right)
                 \\
          & = R_\xi \left(\partial_{x_1} \left[f^\infty_+(r)e^{i\theta}\right]
             \, , \, 
               -\partial_{x_1} \left[f^\infty_-(r)e^{i\theta}\right]\right),
\end{align*}
as $R\to\infty$, (with $R_\xi$, $\xi\in \mathbb S^1$ as in \eqref{action}) as promised in \eqref{split}.  In other words, the energy decreases along a direction which (roughly) corresponds to antipodal motion of the vortices in the respective components.  As in \cite{abm13}, this suggests a symmetry-breaking bifurcation at a critical value $\lambda_*$, at which the equivariant solution loses stability and gives rise to a non-symmetric family of solutions with distinct vortices in each component $\psi_\pm$. We conjecture that this bifurcation does indeed occur, but the analytic details of such a result remain an open problem.

\bibliography{myrefs}{}
\bibliographystyle{alpha}

\nocite{gt01} 
\nocite{b11}

\end{document}